\newcommand{\C}{{\mathbb C}}
\newcommand{\Gr}{\operatorname{Gr}}
\newcommand{\Z}{{\mathbb Z}}
\newcommand\End{\operatorname{End}}
\newcommand\Gl{\operatorname{Gl}}
\newcommand\Hom{\operatorname{Hom}}
\newcommand\homlie{\operatorname{Hom}_{\text{Lie-alg}}}
\newcommand\Tr{\operatorname{Tr}}
\newcommand\Vkdv{V_{\operatorname{odd}}}
\newcommand\coker{\operatorname{coker}}
\newcommand\ad{\operatorname{ad}}
\newcommand\uh[1][{}]{{{\mathcal U}({\mathbb H}#1})}
\renewcommand{\sl}{\mathfrak{sl}}
\newtheorem*{thm*}{Theorem}
\newtheorem{thm}{Theorem}[section]
\newtheorem{prop}[thm]{Proposition}
\newtheorem{defn}[thm]{Definition}
\newtheorem{cor}[thm]{Corollary}
\newtheorem{lem}[thm]{Lemma}
\newtheorem{rem}[thm]{Remark}
\newtheorem{exam}[thm]{Example}
\begin{document}

\title{Virasoro and KdV}

%% Author name(s)
%\author[F. J. Plaza Mart\'{\i}n and C. Tejero Prieto]{Francisco J. Plaza Mart\'{\i}n  And Carlos Tejero Prieto}

%
\author{Francisco J. Plaza Mart\'{\i}n}
\email{fplaza@usal.es}
\address{Departamento de Matem\'aticas and IUFFyM, Universidad de
Salamanca,  Plaza de la Merced 1.  37008 Salamanca. Spain. }
\author{Carlos Tejero Prieto}
\email{carlost@usal.es}
\address{Departamento de Matem\'aticas and IUFFyM, Universidad de
Salamanca,  Plaza de la Merced 1.  37008 Salamanca. Spain. }
%
%\dedication{A dedication can be included here.}
%\classification{14N35 (primary) 17B68, 22E65, 81T40 (secondary). }
\keywords{Virasoro constraints, KP hierarchy, KdV hierarchy, Sato Grassmannian, semisimple quantum cohomology.}
\thanks{\noindent        {\it 2010 MSC}:  
14N35 (primary) 17B68, 22E65, 81T40 (secondary).
% 14H71 (Primary) 14D24, 81R10, 81T40 (Secondary). 
\\
\indent This work is supported by the research grants Mineco-MTM2012--32342, MTM2013--45935--P and Fundaci\'on Sol\'orzano FS/9-2014}

\begin{abstract}
We investigate the structure of representations of the (positive half of the) Virasoro algebra 
and  situations in which they decompose as a tensor product of Lie algebra representations. As an illustration, we apply these  results to the differential operators defined by the Virasoro conjecture and obtain some factorization properties of the solutions as well as a link to  the multicomponent KP hierarchy.
\end{abstract}

\maketitle

%%%%%%%%%%%%%%%%%%%%%%%%%%%%%%%%%%%%%%%%%%%%%
\section{Introduction}

The breakthrough discovery of Witten-Kontsevich (\cite{Witten,Kon}) established an intimate link between mathematical physics and enumerative geometry. From a general perspective, one aims at studying the Gromov-Witten invariants of a smooth projective variety $X$  in terms of suitable integrable hierarchies. From this point of view, the Witten-Kontsevich case corresponds to the situation when $X$ is a point and it was shown that the exponential of the generating function of intersection numbers on the moduli space of curves was a common solution of the Virasoro constraints and of the KdV hierarchy. Therefore, following the generalization for the case of the projective space proposed in \cite{EHX}, on the one hand one wonders if the generating function fulfills a generalization of the Virasoro constraints. On the other hand, one also wants to know if the generating function is given by (the logarithm of) a particular tau-function of an integrable hierarchy. Nowadays, the case of varieties with semisimple quantum cohomology is well understood and the answer to both questions is affirmative (\cite{Teleman} and \cite{DubrovinZ2}; see also \cite{DubrovinZ, Getzler,Givental, LiuTian, OK, OK-Toda}). 

% he seguido la charla de fundan2013.pdf para recopilar las aportaciones de cada uno de ellos.

It is worth pointing out that, for each $X$, the explicit Virasoro operators as well as the relevant integrable hierarchy may vary; for instance, the $2$-Toda hierarchy appears when dealing with the equivariant GW invariants of ${\mathbb P}^1$ (\cite{OK-Toda}). Nevertheless, one recognizes some common features that arise among these results. Let us mention some of them. In \cite{GiventalAn}, Givental studied a case in which the total descendent potential is a $\tau$-function for the $n$KdV-hierarchy by using $n-1$ copies of the KdV. Thus,  the total descendent potential of a semisimple Frobenius manifold was defined in \cite{Givental} as a product of $n$ copies of Witten-Kontsevich $\tau$-functions. Dealing with a case of orbifold quantum cohomology, it has been proved in \cite{JarvisKimura} that the Virasoro constraints decomposed as $n$ copies of (half of) the Virasoro algebra, that their solution was the product of Witten-Kontsevich $\tau$-functions, and that the relevant integrable hierarchy consisted of $n$ commuting copies of the KdV hierarchy. Finally, in \cite{DVV, KacSchwarz} it was shown that  the solution of the Virasoro constraints in the case of Witten-Kontsevich is unique  (up to a constant factor) and this uniqueness also holds in other setups (e.g. \cite{LiuUnique}).

This paper, making use of the representation theoretic properties of the Virasoro algebra, 
offers new insights and results on these properties and provides evidences that the above mentioned properties rely heavily on the structure of the Virasoro algebra and its representations. Our study of explicit expressions for Virasoro representations (see  \S2) is general enough to encode many of the known representations within the framework of Virasoro constraints. Further, it allows us to determine whether a representation is the tensor product of Lie algebra representations and if a solution factorizes as a product of solutions of those representations. An explicit realization of these ideas is carried out in \S3 for the case of  smooth projective varieties with trivial odd cohomology and vanishing first Chern class. Thus, we think that our approach may help in determining the explicit expression of the Virasoro operators as well as the corresponding integrable hierarchies for other types of varieties $X$ (see \S\ref{subsec:final}).  Now, let us be more precise and explain the contents of the paper.

We begin by fixing a pair $(A,(\, ,\,))$ consisting of a finite dimensional vector space and a non-degenerate bilinear form. Associated to this data, we consider a Heisenberg algebra ${\mathbb H}(A)$ and its universal enveloping algebra $\uh[(A)]$. Let us denote by ${\mathcal W}_>$ the positive half of the Virasoro algebra and recall that it contains $\sl(2)$ canonically. Section 2 is entirely devoted to the study of Lie algebra maps ${\mathcal W}_>\to \uh[(A)]$. To begin with, we show that, under some homogeneity condition, there is a canonical bijection between $\Hom_{\text{Lie-alg}}( {\mathcal W}_>, \uh[(A)])$ and $\Hom_{\text{Lie-alg}}( \sl(2), \uh[(A)]$ (Theorem~\ref{thm:W^+-sl2}). This is highly non-trivial since, in general, the problem of extending a map defined on $\sl(2)$ to ${\mathcal W}_>$ involves  infinitely many conditions (see~\cite{PT}). Accordingly, it is natural to expect that many properties of a map ${\mathcal W}_> \to \uh[(A)]$ can be stated in terms of its restriction to $\sl(2)$. Actually, we prove that such a map decomposes as tensor product of Lie algebra representations if and only if its restriction does. Moreover, this factorization is possible only if $A$ decomposes as the orthogonal sum of  two subspaces (Theorem~\ref{thm:rhodecomp}). We finish this section by showing that the fact that ${\mathcal W}_>$ admits no non-trivial finite dimensional representations has important consequences for the structure of the solutions of the equations $(\rho_1\otimes 1 + 1\otimes \rho_2)(L)(\sum_i f_i\otimes g_i)=0$, where $L\in {\mathcal W}_>$ (see Theorem~\ref{thm:solutionrhoi}). That is, decompositions of the representation and of the solutions depend strongly on the structure of ${\mathcal W}_>$ and of $(A,(\, ,\,))$. 

Although the previous results are interesting on their own, \S3 explores their application to concrete situations; for instance, relations with integrable hierarchies (e.g. multicomponent KdV). The case we have chosen to illustrate this issue is that of the differential operators appearing in the Virasoro conjecture when $X$ has trivial odd cohomology (for instance, whenever $X$ has semisimple even quantum cohomology) and its first Chern class vanishes. Then, Theorem~\ref{thm:Lbar=Lhat} shows explicitly how to obtain these operators as the images of the generators $L_k\in{\mathcal W}_>$ by:
	{\small $$
	\hat\rho\,:\, {\mathcal W}_>\, \overset{\rho}\longrightarrow\,  \uh[(A)] \, \overset{\widehat{\,}}\longrightarrow\, \End\big(\C[[\{t_{i,\alpha}\vert 1\leq \alpha\leq \dim(A) , i=1,3,\ldots\}]]\big)
	$$}for $A=H^*(X,\C)$ endowed with the Poincar\'e pairing. Then, our results of \S2 imply that $\hat\rho$ decomposes as the tensor product of Lie algebra representations associated to data $(\C,(\, , \,))$; i.e. the $1$-dimensional case. The detailed study of the $1$-dimensional case carried out in \S\ref{subsec:solutions1dim} shows that, up to re-scaling the variables, the corresponding operators \emph{always }come from a representation:
	$$
	\sigma: {\mathcal W}_> \,\longrightarrow \, \operatorname{Diff}^1(\C((z)))
	$$
which means that we can profit from \cite{KacSchwarz, Plaza-AlgSol} to build the unique solution in terms of a $\tau$-function of the KdV hierarchy. Putting everything together, we have the main results of this section. First, in the case of $\dim A=1$:

\begin{thm*}[see Theorem~\ref{thm:solutionWscaleKdV}]
Let $\rho \in \homlie({\mathcal W}_>,\End(\C[[t_1,t_3,\ldots]]))$ be such that $\rho(L_k)$  is of type $k$ for $k\geq -1$ and that all coefficients of $\rho(L_{-1})$ are non zero. 

Then, there exists a unique $\tau(t)\in\C[[t_1,t_3,\ldots]]$, with $\tau(0)=1$, such that:
	$$
	\rho(L_k)(\tau(t))\,=\,0\qquad k\geq -1
	$$
Further, the solution $\tau(t)$ is a $\tau$-function of the scaled KdV hierarchy.
\end{thm*}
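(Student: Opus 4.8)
The plan is to reduce the assertion to the canonical Virasoro representation $\sigma\colon {\mathcal W}_> \to \operatorname{Diff}^1(\C((z)))$ and then import the existence, uniqueness and integrability already available for that model. The homogeneity hypothesis is the key that makes this reduction rigid: since each $\rho(L_k)$ is of type $k$, the whole map is controlled by its restriction to $\sl(2)=\langle L_{-1},L_0,L_1\rangle$ via Theorem~\ref{thm:W^+-sl2}, so it suffices to normalize the low generators. Combining this with the classification of the $1$-dimensional case in \S\ref{subsec:solutions1dim}, the non-vanishing of all coefficients of $\rho(L_{-1})$ is exactly what allows an explicit rescaling $t_i\mapsto \lambda_i t_i$ of the variables that carries $\rho$ to the normal form $\sigma$. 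Concretely, the first-order operator $\rho(L_{-1})$ plays the role of the string equation, its non-zero coefficients pin down the scaling factors $\lambda_i$ without residual freedom, and once $\rho(L_{-1})$ is in standard form the type conditions propagate the normalization to every $\rho(L_k)$.

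For the normalized system I would invoke \cite{KacSchwarz} together with \cite{Plaza-AlgSol}: these show that $\sigma(L_k)(\tau)=0$ for $k\geq -1$ admits a solution, unique up to a constant factor, and that this solution is a $\tau$-function of the KdV hierarchy. Imposing the normalization $\tau(0)=1$ removes the constant ambiguity and yields a unique $\tau$. I expect the uniqueness to trace back to the string equation $\sigma(L_{-1})(\tau)=0$: being first order with non-degenerate coefficients, it determines the Taylor coefficients of $\tau$ recursively from $\tau(0)=1$, while the higher constraints are automatically compatible because $\rho$ is a homomorphism of Lie algebras and ${\mathcal W}_>$ admits no non-trivial finite dimensional representation, so no additional initial data can survive.

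Finally I would transport the conclusion back through the change of variables. Undoing $t_i\mapsto \lambda_i t_i$ sends the KdV $\tau$-function to a $\tau$-function of the \emph{scaled} KdV hierarchy, which is precisely the claim; since the rescaling is invertible and degree-preserving, both the uniqueness and the normalization $\tau(0)=1$ are preserved, and the solution in the original variables is obtained from the normal-form solution without loss.

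The step I expect to be the main obstacle is the first one: verifying that the two hypotheses (type $k$ for all $k\geq -1$, and non-vanishing of the coefficients of $\rho(L_{-1})$) are \emph{precisely} sufficient to bring $\rho$ into the canonical form $\sigma$ by a rescaling alone, with no leftover parameters. This is where the detailed $1$-dimensional analysis of \S\ref{subsec:solutions1dim} and the rigidity furnished by Theorem~\ref{thm:W^+-sl2} carry the real weight; once the normal form is reached, the KdV conclusion follows essentially by citation of \cite{KacSchwarz, Plaza-AlgSol}.
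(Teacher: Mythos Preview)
Your approach matches the paper's: Lemma~\ref{lem:Diff1scaleW} furnishes the rescaling $\lambda$ carrying $\rho$ to a map induced by some $\sigma:{\mathcal W}_>\to\operatorname{Diff}^1(\C((z)))$, existence of a KdV $\tau$-function for the normal form is imported from \cite{Plaza-AlgSol}, and undoing $\lambda$ gives the scaled-KdV claim. Two small corrections are in order. First, the rescaling does \emph{not} eliminate all parameters: one constant $s$ survives in the normal form (Lemma~\ref{lem:Diff1scaleW}), though this is harmless since \cite{Plaza-AlgSol} handles every $s\neq 0$ and the hypothesis that \emph{all} coefficients of $\rho(L_{-1})$ are nonzero is what guarantees $s\neq 0$. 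Second, your expectation that the string equation $\rho(L_{-1})(\tau)=0$ alone determines the Taylor coefficients of $\tau$ recursively is incorrect: for instance the coefficient of $t_3$ in $\log\tau$ is invisible to $L_{-1}$ and is pinned down only by the $L_0$-constraint. The paper's uniqueness argument (Lemma~\ref{lem:uniquenesTauVirasoro}) therefore runs the entire family $\{\rho(L_k)\}_{k\geq -1}$, not just the first operator.
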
\noindent and, for $\dim A=N \geq 2$:

\begin{thm*}[see Theorem~\ref{thm:solutionproductKdV}]
Let $\rho:{\mathcal W}_>\to \uh[(A)]$ be as in \S\ref{subsec:baby}.

There exist $S\in \Gl(A)$ and functions $\tau_\alpha(t_{1,\alpha},t_{3,\alpha},\ldots)\in \C[[t_{1,\alpha},t_{3,\alpha},\ldots]]$ such that:
	$$
	\hat\rho(L_k)\big(S( \prod_\alpha \tau_\alpha( t_\alpha))\big)\,=\, 0
	$$

Further, $\tau_\alpha(t_{1,\alpha},t_{3,\alpha},\ldots)$ are $\tau$-functions of the scaled KdV hierarchy.\end{thm*}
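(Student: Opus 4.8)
The plan is to reduce the $N$-dimensional problem to $N$ copies of the one-dimensional case of Theorem~\ref{thm:solutionWscaleKdV} by exploiting the tensor-product decomposition of \S2. First I would use that $(\,,\,)$ is symmetric and non-degenerate to choose $S\in\Gl(A)$ orthogonalizing the form, so that in the new basis $A=\bigoplus_{\alpha}A_\alpha$ is an orthogonal sum of one-dimensional subspaces $A_\alpha=\C e_\alpha$. By Theorem~\ref{thm:rhodecomp}, this orthogonal splitting forces the transported representation to decompose as a tensor product $\bigotimes_\alpha\hat\rho_\alpha$, where each $\hat\rho_\alpha$ is a representation of ${\mathcal W}_>$ on $\C[[t_{1,\alpha},t_{3,\alpha},\ldots]]$ of the one-dimensional type studied in \S\ref{subsec:solutions1dim}. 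Concretely, $S$ acts on $\C[[\{t_{i,\alpha}\}]]$ through the induced linear change of the variables $t_{i,\alpha}$ and intertwines the given $\hat\rho$ with the decomposed representation, so that $\hat\rho(L_k)\circ S=S\circ\big(\bigotimes_\alpha\hat\rho_\alpha\big)(L_k)$.

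Next I would solve each factor separately. For every $\alpha$ the one-dimensional representation $\hat\rho_\alpha$ satisfies the hypotheses of Theorem~\ref{thm:solutionWscaleKdV}, namely that $\hat\rho_\alpha(L_k)$ is of type $k$ and that the coefficients of $\hat\rho_\alpha(L_{-1})$ are non-zero; hence there is a unique $\tau_\alpha\in\C[[t_{1,\alpha},t_{3,\alpha},\ldots]]$ with $\tau_\alpha(0)=1$ annihilated by all $\hat\rho_\alpha(L_k)$ for $k\geq-1$, and moreover $\tau_\alpha$ is a $\tau$-function of the scaled KdV hierarchy. This already delivers the functions $\tau_\alpha$ and the final assertion of the statement.

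It then remains to assemble the factors. Because the generators act on the tensor product as derivations, that is $\big(\bigotimes_\alpha\hat\rho_\alpha\big)(L_k)=\sum_\alpha 1\otimes\cdots\otimes\hat\rho_\alpha(L_k)\otimes\cdots\otimes 1$, and each $\tau_\alpha$ depends only on its own block of variables, a Leibniz computation gives
$$
\big(\textstyle\bigotimes_\alpha\hat\rho_\alpha\big)(L_k)\Big(\prod_\alpha\tau_\alpha\Big)\,=\,\sum_\alpha\Big(\prod_{\beta\neq\alpha}\tau_\beta\Big)\,\hat\rho_\alpha(L_k)(\tau_\alpha)\,=\,0,
$$
which is exactly the product-of-solutions statement covered by Theorem~\ref{thm:solutionrhoi}. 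Applying the intertwiner $S$ then yields $\hat\rho(L_k)\big(S(\prod_\alpha\tau_\alpha)\big)=0$, as required.

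The main obstacle I expect lies in the first step: one must verify precisely that the hypotheses of Theorem~\ref{thm:rhodecomp} are met for the specific $\rho$ of \S\ref{subsec:baby}, and that the orthogonalizing $S$ simultaneously brings $(\,,\,)$ into diagonal form and transports $\hat\rho$ into a genuine tensor product whose factors are of the one-dimensional type. In particular, one has to check that the homogeneity/type grading and the non-vanishing of the $L_{-1}$-coefficients are inherited by each $\hat\rho_\alpha$, so that Theorem~\ref{thm:solutionWscaleKdV} applies factorwise. Once the decomposition and the intertwining role of $S$ are established rigorously, the remaining steps are the direct application of the one-dimensional theorem and the Leibniz computation above.
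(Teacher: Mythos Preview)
Your proposal is correct and follows essentially the same route as the paper: orthogonalize $\eta$ via $S$, decompose $\rho^S$ into one-dimensional factors, solve each factor by Theorem~\ref{thm:solutionWscaleKdV}, and reassemble via the easy direction of Theorem~\ref{thm:solutionrhoi}. The paper packages your ``main obstacle'' (checking that the orthogonalizing $S$ simultaneously diagonalizes $a$, $b_{-1}^{i+2,i}$, $b_0^{1,1}$, $c_1^{1,1}$ so that Theorem~\ref{thm:rhodecomp} applies) into the separate Theorem~\ref{thm:Virasorodecomp}, whose proof is exactly the verification you anticipate; the non-vanishing of the $L_{-1}$-coefficients for each factor then follows from the non-degeneracy of $\eta$ in an orthogonal basis.
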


We hope that our methods shed some light on the explicit expressions of the Virasoro operators and of the relevant integrable hierarchies that appear when studying the Virasoro conjecture.  We also think that the techniques presented here can be applied to many instances of representations of ${\mathcal W}_>$ which appear in a variety or problems such as recursion relations, Hurwitz numbers, knot theory, etc.~. We sketch some ideas in \S\ref{subsec:final} although all of them deserve further research.  

\emph{Acknowledgements.} We thank A. Givental and G. Borot for explaining us some facts on their papers. The first author  wish to express his gratitude to the Max Planck Institute f\"ur Mathematik (Bonn) for the invitation in fall 2014. 

\section{Lie algebras}\label{sec:ReprW>}
% $\sl(2)$, Witt and Heisenberg

Let ${\mathcal W}$ be the Witt algebra; that is, the $\C$-vector space
with basis $\{L_k\}_{k\in\Z}$ endowed with the Lie bracket
$[L_i,L_j]=(i-j) L_{i+j}$, and let ${\mathcal W}_>$ be the subalgebra
generated by $\{L_k\}_{k\geq -1}$. It contains a copy of $\sl(2)$ via $\sl(2)=<\{L_{-1}, L_0, L_1\}> \subset {\mathcal W}_>$. Recall that ${\mathcal W}_>$ is also called the \emph{positive half of the centereless Virasoro algebra}. 

In this section, we study certain maps from  $\sl(2)$ and their extensions to  ${\mathcal W}_>$. These results will eventually allow us to relate the representation theories of ${\mathcal W}_>$ and $\sl(2)$. A further consequence is that, in order to construct the operators $L_0, L_1, L_2, \ldots$ one only has to start with $L_{-1}$ and follow some simple procedures and choices. 

It is worth mentioning that a study of the representation theory of ${\mathcal W}_>$ in terms of the representation theory of its subalgebra $\sl(2) \subset {\mathcal W}_>$ has been carried out in \cite{PT} in full generality. 
	
%%%%%%%%%%%%%%%%%%%%%%%%%%%%%%%%%%%%%%%%%%%%%
\subsection{Preliminaries}\label{subsec:prelim-beta-Diff-to-C[[t]]}

Let us be more precise. Let $(A, (\, ,\,))$ be given, where $A$ is a finite dimensional $\C$-vector space and $(\,,\,)$ is a non-degenerated bilinear pairing. For a basis $\{a_{\alpha}\vert \alpha=1,\ldots, n\}$ of $A$, let $\eta=(\eta_{\alpha\beta})$ denote the matrix associated to the given bilinear product; that is, $\eta_{\alpha\beta}:=(a_{\alpha}, a_{\beta})$. The inverse will be denoted with superindexes; i.e. $\eta^{\alpha\beta}:=(\eta^{-1})_{\alpha\beta}$. 

Let us consider unknowns $\{p_i,q_i\vert i\geq 1\}$ and introduce $p_{i,\alpha}:=p_i\otimes a_{\alpha}$ and $q_{i,\alpha}:=q_i\otimes a_{\alpha}$. Let ${\mathbb H}(A)$ be the Heisenberg algebra generated by $\{1, p_{i,\alpha},q_{i,\alpha}\vert i\geq 1, \alpha=1,\ldots, n\}$, whose elements will be called \emph{operators},  endowed with the Lie bracket: 
    \begin{equation}\label{eq:pqcommutation}
    \begin{gathered}
    \, [p_{i,\alpha}, q_{j,\beta}] \,=\, \delta_{i,j} i \eta^{\alpha\beta}\cdot 1
    \\
   \,  [p_{i,\alpha}, p_{j,\beta}] \,=\,[q_{i,\alpha}, q_{j,\beta}] \,=\, 0
    \\
   \,  [p_{i,\alpha}, 1] \,=\,[q_{i,\alpha}, 1] \,=\, 0
    \end{gathered}
    \end{equation}
We define their degree by $\deg(q_{i,\alpha})=i$, $\deg(p_{i,\alpha})=-i$ and $\deg(1)=0$. 

Although the definition of the Heisenberg algebra depends on the pair $(A, (\, ,\,))$, it will be simply denoted by ${\mathbb H}$ if no confusion arises.

For ${\mathbb H}$ as above, let us define $\uh$ the universal enveloping algebra of ${\mathbb H}$, which is the quotient of the tensor algebra of  ${\mathbb H}$ by the two-sided ideal generated by the relations $u\otimes v- v\otimes u-[u,v]$. 

Motivated by the explicit forms of the Virasoro operators considered in the literature (\cite{DVV, DubrovinZ,EO, Getzler, Givental,KSU, KazarianZograf}), we introduce the following notion. The ultimate meaning of this notion is unveiled in Lemma~\ref{lem:Diff1scaleW}.
 
%\begin{defn}\label{defn:type}
%%Let ${\mathbb H}_i$ the subspace of degree $i$ of ${\mathbb H}$; that is, ${\mathbb H}_i:=<\!\{ q_i\}\!>\otimes A$, ${\mathbb H}_{-i}:=<\!\{ p_i\}\!>\otimes A$ for $i\geq 0$ and ${\mathbb H}_0:=<1>\otimes A$. Then, define the subspace of operators of $\uh$ of type $i$ by:
%%	\[
%%	\uh[(A)]_i\,:=\,  {\mathbb H}_{-2i-3} + \sum_{j} {\mathbb H}_j \cdot {\mathbb H}_{i-j}
%%	\]
%\end{defn}
%
%For $i\geq -1$, an operator $T\in \uh[(A)]_i$ or, if no confusion arises, $T\in \uh_i$ is a linear combination of $p_{2i+3,\alpha}$ and double products of degree $-2i$; i.e. $p_{j,\alpha}p_{2i-j,\beta}$, $q_{j,\alpha}p_{2i+j,\beta}$ and $q_{j,\alpha}q_{-j-2i,\beta}$. If $i=0$ we also allow a constant term.

\begin{defn}\label{defn:type}
An operator $T\in \uh$ is \emph{of type $i\geq -1$} if it is a linear combination of $p_{2i+3,\alpha}$ and double products of degree $-2i$; i.e. $p_{j,\alpha}p_{2i-j,\beta}$, $q_{j,\alpha}p_{2i+j,\beta}$ and $q_{j,\alpha}q_{-j-2i,\beta}$. If $i=0$ we also allow a constant term.

The subset consisting of operators of type $i\geq -1$ will be denoted by $\uh[(A)]_i$ (or, simply, $\uh_i$). 
\end{defn}

This section deals with the study of homomorphisms of Lie algebras:
	$$
	\rho:{\mathcal W}_> \, \longrightarrow\, \uh \qquad \text{s.t. } \rho(L_i)\in \uh_i
	$$
	
Let us illustrate the previous definition. From now on, according to Einstein convention, summation over repeated indices will be understood. For instance, an operator of type $-1$ is of the form:
    \begin{equation}\label{eq:type-1}
    b_{-1}^{0,1} p_1 +  q_1 a_{-1}^{1,1}  q_1^T  + q_{i+2} b_{-1}^{i+2,i}  p_i \, \in\, \uh_{-1} 
    \end{equation}
(the sum  runs over the set of odd positive integers $i$),  $p_i$ is the column vector $(p_{i,1}\ldots,p_{i,n})^T$, $q_i$ is the row vector $(q_{i,1}\ldots,q_{i,n})$,  $b_{-1}^{0,1}$ is a row vector, $a_{-1}^{1,1}$  and $b_{-1}^{i+2,i}$ are $n\times n$ matrices. For brevity, we set $a:=a_{-1}^{1,1} $. 

Similarly, an type $0$ operator can be expressed as:
    \begin{equation}\label{eq:type0}
   b_{0}^{0,3} p_3 + b_0^{0,0}  + q_{i} b_{0}^{i,i}  p_i \, \in\, \uh_{0}
    \end{equation}
while an operator of type $i\geq 1$ is of the form:
	\begin{equation}\label{eq:type}
	b_i^{0,2i+3} p_{2i+3} + p_{j}^T c_i^{j,2i-j} p_{2i-j} +  q_j b_i^{j,2i+j} p_{2i+j}
	\,\in\,\uh_i \qquad i\geq 1
	\end{equation}
for a row vector $b_i^{0,2i+3}$ and $n\times n$-matrices $b_i^{j,2i+j}$ and $c_i^{j,2i-j}$, where  $c_i^{j,2i-j}=(c_i^{2i-j,j})^T$ and the sum runs over $j$ odd.

It is convenient to offer an interpretation of these matrices. Recall that $q_i$ is the row vector $(q_{i,1},\ldots, q_{i,n})$, which can be thought as an ${\mathbb H}$-valued vector of $A$. A similar argument holds for the column vector $p_i$. Thus, under a basis change in $A$, the matrix $b$ in $q_i\cdot b\cdot p_i$ behaves as a bilinear form on $A$. The same fact applies to all $a$, $b$ and $c$ matrices.  Similarly, column vectors $b_i^{0,2i+3}$ are understood as vectors on $A$ while row vectors are like linear forms. 

It is worth noticing how these operators behave w.r.t. the Lie bracket. Indeed, the computations given in subsection~\S\ref{subsect:CR} and the linearity of the bracket show that it is compatible with the degree:
	\begin{equation}\label{eq:LieT}
	[\,\, , \,\, ] \,:\, \uh_i \times \uh_j \, \longrightarrow\, \uh_{i+j}
	\end{equation}
where $i,j,i+j\geq -1$. In particular, it follows that $\oplus_{i\geq -1} \uh_i$ is a partial Lie algebra.

%%%%%%%%%%%%%%%%%%%%%%%%%%%%%%%%%%%%%%%%%%%%%
\subsection{Maps from $\sl(2)$ to Heisenberg}\label{subsect:sl2}

Let $\sl(2)$ be the Lie algebra of $\operatorname{Sl}(2,\C)$. We fix a  basis $\{e,f,h\}$  of $\sl(2)$  satisfying the relations:
    $$ % \begin{equation}\label{eq:basis-sl2}
    [e,f]=h\; , \qquad
    [h,e]=2e \; , \qquad
    [h,f]=-2f\; .
    $$ % \end{equation}
In particular, the previous choice yields a natural embedding:
    \begin{equation}\label{eq:emb-sl2-W+}
    \iota:\sl(2)\,\hookrightarrow\, {\mathcal W}_>
    \end{equation}
by mapping $f$ to $L_{-1}$, $h$ to $-2L_0$, and $e$ to $-L_1$.

\begin{lem}\label{lem:FyieldsH}
Let $F\in  \uh_{-1}$ be as in \eqref{eq:type-1}. 
% $b_{-1}^{0,1} p_1 +  q_1 a  q_1^T  + q_{i+2} b_{-1}^{i+2,i}  p_i \in \uh_{-1}$.  
Assume that % $a $ is symmetric  and that 
$b_{-1}^{i+2,i}$ is invertible for all $i$. It holds that:
    {\small $$
    \left\{
    \begin{gathered}
    H\in\uh_0 \text{ s.t.}
    \\
    [H,F]\,=\, - 2F
    \end{gathered}
    \right\}
    \,\simeq\,
    \left\{
    \begin{gathered}
    (b,B)\in \C\times \operatorname{Mat}_{n\times n}(\C)
    \text{ s.t.}
    \\ (B \eta^{-1} + \operatorname{Id}) (a +a^T) +  (a +a^T)(B \eta^{-1} +\operatorname{Id})^T = 0
    \end{gathered}
    \right\}
    $$}
\end{lem}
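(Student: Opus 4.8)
The plan is to compute the bracket $[H,F]$ explicitly for an arbitrary $H\in\uh_0$ and the given $F\in\uh_{-1}$, and then to match it against $-2F$ term by term, reading off what the equation $[H,F]=-2F$ forces on the coefficients of $H$. First I would fix notation, writing $H$ in its general type-$0$ form \eqref{eq:type0}, say $H = b_0^{0,3}p_3 + b_0^{0,0} + q_k b_0^{k,k}p_k$ with the sum over odd $k$, and abbreviating $B_k:=b_0^{k,k}$, $C_i:=b_{-1}^{i+2,i}$, $\ell:=b_{-1}^{0,1}$ and $a:=a_{-1}^{1,1}$. The technical core is a single bilinear commutator identity, obtained directly from \eqref{eq:pqcommutation}:
$$
[q_m M p_n,\, q_r N p_s] \,=\, \delta_{nr}\,n\, q_m M\eta^{-1}N p_s \,-\, \delta_{ms}\,s\, q_r N\eta^{-1}M p_n,
$$
together with the two degenerate versions $[q_1 B_1 p_1,\, q_1 a q_1^T] = q_1 B_1\eta^{-1}(a+a^T)q_1^T$ and $[q_1 B_1 p_1,\, \ell p_1] = -\,\ell\eta^{-1}B_1 p_1$. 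With these in hand the computation of $[H,F]$ is routine: most brackets vanish because the relevant degrees fail to coincide, and the surviving contributions organize themselves according to the three building blocks of a type-$(-1)$ operator, namely the linear term in $p_1$, the quadratic term $q_1(\,\cdot\,)q_1^T$, and the tower of terms $q_{i+2}(\,\cdot\,)p_i$.

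Matching $[H,F]=-2F$ in each of these three components yields three families of equations. The $q_1(\,\cdot\,)q_1^T$-component gives, after using that $q_1 M q_1^T$ only sees the symmetric part of $M$, a single relation on $B:=B_1$, which upon regrouping the factor $-2$ into $+\Id$ terms becomes exactly
$$
(B\eta^{-1}+\Id)(a+a^T) \,+\, (a+a^T)(B\eta^{-1}+\Id)^T \,=\, 0,
$$
i.e. the asserted matrix equation. The constant $b_0^{0,0}=:b$ is central, hence completely unconstrained; this accounts for the free $\C$-factor. The $p_1$-component reads $3\,b_0^{0,3}\eta^{-1}C_1 = \ell\eta^{-1}B_1 - 2\ell$ (with $C_1=b_{-1}^{3,1}$): since $C_1$ is invertible it determines $b_0^{0,3}$ uniquely in terms of $B_1$. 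Finally, the $q_{i+2}(\,\cdot\,)p_i$-component gives, for each odd $i\ge1$, a relation of the form $(i+2)B_{i+2}\eta^{-1}C_i - i\,C_i\eta^{-1}B_i = -2C_i$; invertibility of $C_i$ lets me solve it uniquely for $B_{i+2}$ in terms of $B_i$, so the entire tower $B_3,B_5,\dots$ is recursively and uniquely determined by $B_1$.

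Putting these together, an operator $H$ with $[H,F]=-2F$ is completely and freely specified by the pair $(b,B)=(b_0^{0,0},b_0^{1,1})$ subject only to the matrix equation above, and conversely every such pair produces a (unique) admissible $H$; this is the claimed bijection, realized by $H\mapsto(b_0^{0,0},b_0^{1,1})$. The main obstacle is organizational rather than conceptual: one must carry out the bracket computation cleanly enough that the three components decouple as above, and then recognize that the hypothesis that every $b_{-1}^{i+2,i}$ be invertible is precisely what converts the a priori infinite list of linear and recursive constraints into a system with a unique solution pinned down by the finite data $(b,B)$. One last harmless point to verify is that the recursively defined matrices $B_{i+2}$ always assemble into a legitimate type-$0$ operator, which holds because \eqref{eq:type0} imposes no constraint on the individual matrices $b_0^{i,i}$ for odd $i$.
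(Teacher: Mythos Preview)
Your proposal is correct and follows essentially the same approach as the paper: write $H$ in its general type-$0$ form, expand $[H,F]$ using the Heisenberg commutation relations, and match the three components ($p_1$, $q_1(\cdot)q_1^T$, and $q_{i+2}(\cdot)p_i$) against $-2F$ to obtain exactly the paper's system \eqref{eq:FyieldsH}, then observe that invertibility of the $b_{-1}^{i+2,i}$ lets one solve uniquely for $b_0^{0,3}$ and recursively for the $b_0^{i+2,i+2}$ once $(b_0^{0,0},b_0^{1,1})$ is fixed. Your packaging of the commutator as a single bilinear identity is slightly cleaner than the paper's case-by-case list in \S\ref{subsect:CR}, but the content is the same.
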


\begin{proof}
Our task consists of computing the  bracket $[H,F]$ explicitly. Recall that, for simplicity, we have set $a=a_{-1}^{1,1}$. Since $H\in\uh_0$, it must be of the form $H:=  b_0^{0,3} p_3 +  b_{0}^{0,0}   +  q_{i}  b_{0}^{i,i} p_i $ where $b_0^{0,3}$ is a row vector, $b_{0}^{0,0}$ is an homothety, and $b_{0}^{i,i}$ are $n\times n$ matrices.

Having in mind the commutation relations of \S\ref{subsect:CR}, the bracket $[H,F]$ is a linear combination of $p_{1}$, $q_{1\alpha}q_{1\beta}$ and $q_{i+2,\alpha}p_{i,\beta}$. Therefore, the expression $[H,F]=-2F$ is equivalent to the following identities:
	$$
	    \begin{aligned}
    \big( 3 b_0^{0,3} \eta^{-1}  b_{-1}^{3,1} - b_{-1}^{0,1}\eta^{-1}  b_0^{1,1} \big) p_1 \, & =\, -2 b_{-1}^{0,1} p_1
    \\
     q_1 b_0^{1,1}\eta^{-1}  \big( a  + a^T   \big) q_1^T   \, & =\,  -2 q_1 a   q_1^T
    \\
    q_{i+2}\big( (i+2) b_0^{i+2,i+2}\eta^{-1}  b_{-1}^{i+2,i}  - i  b_{-1}^{i+2,i}\eta^{-1}  b_0^{i,i}\big) p_i
    \, & =\,  - 2 q_{i+2} b_{-1}^{i+2,i} p_i
    \qquad\forall i \geq 1
    \end{aligned}
	$$
Observe that $q_1 A q_1^T = q_1 B q_1^T$ if and only if $A+A^T=B+B^T$.  Hence, 
the above system is equivalent to the following equations:
        \begin{subequations}\label{eq:FyieldsH}
    \begin{align}
    \label{eq:FyieldsH:1}
    3 b_0^{0,3} \eta^{-1} b_{-1}^{3,1} - b_{-1}^{0,1}\eta^{-1} b_0^{1,1} \, & =\, -2 b_{-1}^{0,1}
    \\
     \label{eq:FyieldsH:2}
    b_0^{1,1} \eta^{-1} (a +a^T)+  (a +a^T)( b_0^{1,1}\eta^{-1} )^T  
     \, & =\,  - 2 (a +a^T)
         \\
    \label{eq:FyieldsH:3}
    (i+2) b_0^{i+2,i+2}\eta^{-1} b_{-1}^{i+2,i}  - i b_{-1}^{i+2,i} \eta^{-1}  b_0^{i,i}
    \, & =\,  - 2 b_{-1}^{i+2,i}
    \qquad\forall i\geq 1 
    \end{align}
        \end{subequations}

Note that, since $b_{-1}^{i+2,i}$ and $\eta$ are invertible,  given a pair $(b,B)$ as in the statement, this system has a unique solution for $b_0^{0,0}=b$ and $b_0^{1,1}=B$; namely,
	\begin{equation}\label{eq:coefH}
	\begin{aligned}
	b_0^{0,3}\, &=\, \frac13 b_{-1}^{0,1}(\eta^{-1} b_0^{1,1}-2) (\eta^{-1} b_{-1}^{3,1})^{-1} 
	\\
	b_0^{i+2,i+2} \,&=\, \frac1{i+2}  b_{-1}^{i+2,i} (i \eta^{-1} b_0^{i,i}-2 )(\eta^{-1} b_{-1}^{i+2,i})^{-1} \qquad\forall i\geq 1
	\end{aligned}
	\end{equation}
The converse is straightforward.
\end{proof}

\begin{exam} 
Set  $F= b_{-1}^{0,1}  p_1 +   q_1  a  q_1^T  +  \frac{i+2}2 q_{i+2} p_i$ and $b_0^{1,1}=-\frac12$, then  $H=-2 b_{-1}^{0,1} p_3 +   b_{0}^{0,0}   + i q_i p_i$.  Note that $i q_{i} p_i$ is the \emph{degree operator}.
\end{exam}

\begin{exam} 
Let us consider the case where the chosen basis in $A$ is orthonormal; i.e. $\eta$ is the identity matrix, and suppose that:
    $$
    F\,=\, b_{-1}^{0,1}  p_1 +   q_1  a  q_1^T  +  q_{i+2} p_i\,\in\,\uh_{-1}
    $$
Then, operators $H$ given by Lemma~(\ref{lem:FyieldsH}) acquire the form:
    $$ % \begin{equation}\label{eq:H=dilaton}
    H \,=\,    \frac13 b_{-1}^{0,1}( b_0^{1,1}-2)   p_3 
    +   b_{0}^{0,0}   +  \frac1{i} q_i  ( b_0^{1,1}-(i-1)) p_i,\in\,\uh_0
    $$ % \end{equation}
where $ b_{0}^{0,0} \in \C$ and $b_0^{1,1}$ verifies $(b_0^{1,1} +\operatorname{Id}) a  +  a (b_0^{1,1} +\operatorname{Id})^T =0$. 
\end{exam}

\begin{exam}\label{exam:dim1}
Finally, let $\dim A=1$ , $a, \eta\in\C^{\ast}$ and $F = b_{-1}^{0,1}  p_1 +   q_1  a  q_1^T  +  q_{i+2} \eta p_i$. Then, $ b_0^{i,i}=-\eta$ for all $i$ and $H=-  b_{-1}^{0,1}  p_3     +   b_{0}^{0,0}   -   q_i  \eta  p_i$. 
\end{exam}

\begin{lem}\label{lem:T'T'bracket}
Let $H$ be as in equation~\eqref{eq:type0} and $\uh'_i$ be the subspace:
	$$
	\uh'_i\,:=\, \{ T\in\uh_i  \text{ s.t. } [H,T]= 2i T \}
	$$
Then, it holds that $[ \uh'_i ,  \uh'_j ]\subseteq \uh'_{i+j} $.
\end{lem}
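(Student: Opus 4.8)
The plan is to reduce the statement to the single fact that $\ad(H)=[H,-]$ is a derivation of the bracket, together with the compatibility of the bracket with the grading. First I would observe that the inclusion $[\uh'_i,\uh'_j]\subseteq\uh_{i+j}$ is already guaranteed by the grading property \eqref{eq:LieT}, so the only thing left to check is the eigenvalue condition: that every bracket of an element of $\uh'_i$ with an element of $\uh'_j$ is a $2(i+j)$-eigenvector of $\ad(H)$. Phrased differently, writing $\uh'_i=\{T\in\uh_i : \ad(H)(T)=2iT\}$ as the $2i$-eigenspace of $\ad(H)$ restricted to $\uh_i$, the claim is precisely that the bracket of two eigenvectors is again an eigenvector, with eigenvalue the sum of the two.

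The key step is then a one-line application of the Jacobi identity. For $T\in\uh'_i$ and $S\in\uh'_j$ I would compute
\begin{equation*}
[H,[T,S]]=[[H,T],S]+[T,[H,S]]=2i\,[T,S]+2j\,[T,S]=2(i+j)[T,S],
\end{equation*}
where the first equality is the Jacobi identity (equivalently, the statement that $\ad(H)$ is a derivation of $\uh$) and the remaining equalities substitute $[H,T]=2iT$ and $[H,S]=2jS$. Combined with $[T,S]\in\uh_{i+j}$ from \eqref{eq:LieT}, this yields $[T,S]\in\uh'_{i+j}$, which is exactly the assertion.

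I do not expect any genuine obstacle, since the computation is formal. The only mild subtlety worth flagging is that $\ad(H)$ need not preserve the graded pieces $\uh_i$ for an arbitrary $H$; it is the specific type-$0$ shape of $H$ in \eqref{eq:type0} that, via \eqref{eq:LieT} applied with second argument in $\uh_0$, ensures $\ad(H)(\uh_i)\subseteq\uh_i$, so that the phrase ``eigenspace of $\ad(H)$ on $\uh_i$'' is well posed. Once this is recorded, the proof is complete.
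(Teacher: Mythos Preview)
Your proof is correct and is exactly the argument the paper has in mind: it invokes \eqref{eq:LieT} for the grading and the Jacobi identity for the eigenvalue condition, just as the paper's one-line proof does. Your final remark is harmless but unnecessary: the subspace $\uh'_i=\{T\in\uh_i:[H,T]=2iT\}$ is well defined regardless of whether $\ad(H)$ preserves $\uh_i$, so nothing in the argument actually relies on that preservation.
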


\begin{proof}
The claim follows easily from~\eqref{eq:LieT} and the Jacobi identity.
\end{proof}

\begin{thm}\label{thm:sl2-symmmatrices}
Let $F$ and $H$ be as in equations \eqref{eq:type-1} and \eqref{eq:type0} respectively. 

There is a surjective map:
    {\small
    $$
    \left\{
        \begin{gathered}
            c \in M_{n\times n}(\C)\text{ such that }
            \\
             b_0^{0,0} =   \operatorname{Tr}( c \eta^{-1}  (a+a^T)  (\eta^{-1})^T)% \text{, and}
            \\
            \text{and equation~\eqref{eq:HF-2F:2} below}
        \end{gathered}
    \right\} 
    \, \longrightarrow\,
    \left\{
        \begin{gathered}
        \sigma \in \homlie(\sl(2),\uh) \\
         \text{such that } \sigma(f)=F, \\
         \sigma(h)=H  \text{ and }\sigma(e)\in\uh_1
        \end{gathered}
    \right\}
    $$}
Moreover, $c_1$ and $c_2$ have the same image iff $c_1+c_1^T= c_2+c_2^T$. Thus, the restriction of the above map to symmetric matrices yields a bijection. 
\end{thm}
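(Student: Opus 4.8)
The plan is to translate the statement into the three bracket relations defining an $\sl(2)$-homomorphism and to analyze them one coefficient at a time. Writing $E:=\sigma(e)$, the requirement that $\sigma$ be a Lie algebra map with $\sigma(f)=F$, $\sigma(h)=H$ and $\sigma(e)\in\uh_1$ is equivalent, via the relations $[e,f]=h$, $[h,e]=2e$, $[h,f]=-2f$, to the three identities $[E,F]=H$, $[H,E]=2E$ and $[H,F]=-2F$; the last holds by the very choice of $F$ and $H$ (compare Lemma~\ref{lem:FyieldsH}), so only the first two must be solved. I expand $E$ in the general type-$1$ form \eqref{eq:type} with $i=1$, i.e. with unknown coefficients $b_1^{0,5}$, $c:=c_1^{1,1}$ and $\{b_1^{j,2+j}\}_{j\ \mathrm{odd}}$, and I regard $c$ as the distinguished parameter.

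First I would impose $[E,F]=H$ and match coefficients using the commutation rules of \S\ref{subsect:CR} together with \eqref{eq:LieT}, which guarantees $[E,F]\in\uh_0$. The constant term of $[E,F]$ arises from the double contraction of $p_1^Tc\,p_1$ against $q_1 a\,q_1^T$ and equals a trace in $c,a,\eta$; matching it with $b_0^{0,0}$ yields exactly the condition $b_0^{0,0}=\Tr(c\eta^{-1}(a+a^T)(\eta^{-1})^T)$. Matching the $q_ip_i$ components produces a recursion: the $q_1p_1$ equation expresses $b_1^{1,3}$ in terms of $c$, and the $q_ip_i$ equation for $i\ge 3$ expresses $b_1^{i,2+i}$ in terms of $b_1^{i-2,i}$; since $\eta$ and the $b_{-1}^{i+2,i}$ are invertible (hence so are the $H$-coefficients supplied by \eqref{eq:coefH}), each step is uniquely solvable, so all $b_1^{j,2+j}$ are determined by $c$. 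Finally the $p_3$ component fixes $b_1^{0,5}$. Thus $[E,F]=H$ is solvable precisely when the trace condition holds, and then $E$ is uniquely determined by $c$.

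Next I would impose $[H,E]=2E$ on this $E$. Its $p_1p_1$ component is the $+2$-eigenvalue analogue of \eqref{eq:FyieldsH:2}, namely \eqref{eq:HF-2F:2}, a linear constraint on $c$; together with the trace condition it cuts out the domain in the statement. The remaining components ($q_jp_{2+j}$ and $p_5$) must then be shown to hold automatically. Here I would invoke the Jacobi identity: from $[E,F]=H$ and $[H,F]=-2F$ one gets $[\,[H,E]-2E,\,F\,]=0$, so $X:=[H,E]-2E\in\uh_1$ centralizes $F$. A separate lemma — the technical core — asserts that $\ad_F$ is injective on operators of $\uh_1$ with vanishing $p_1p_1$ part; this is proved by the same invertibility-driven induction as above, reading off the $q_ip_i$, $p_3$ and constant components of $[X,F]=0$ to kill successively the $q_jp_{2+j}$ and $p_5$ coefficients of $X$. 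Granting \eqref{eq:HF-2F:2} (vanishing of the $p_1p_1$ part of $X$), this forces $X=0$, so $[H,E]=2E$ holds in full. This shows $c\mapsto\sigma$ is a well-defined map on the stated domain; surjectivity is immediate, since any admissible $\sigma$ has $E=\sigma(e)\in\uh_1$ whose $p_1p_1$-coefficient is a $c$ satisfying (necessarily) both constraints and reproducing $\sigma$ by the uniqueness just proved.

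It remains to identify the fibers. Because the components $p_{1,\alpha}$ commute, the operator $p_1^Tc\,p_1$ depends only on the symmetric part $c+c^T$; by the recursion, so does the entire operator $E$ and hence $\sigma$. Therefore $c_1$ and $c_2$ map to the same $\sigma$ whenever $c_1+c_1^T=c_2+c_2^T$. Conversely, equal $\sigma$ forces equal $E$, whose $p_1p_1$-coefficient recovers $c+c^T$, giving $c_1+c_1^T=c_2+c_2^T$. Thus the fibers are exactly the cosets of antisymmetric matrices, and restricting to symmetric matrices yields the asserted bijection. I expect the main obstacle to be the injectivity lemma for $\ad_F$ on the $p_1p_1$-complement (equivalently, the consistency of the two infinite families of coefficient equations), since it is precisely what rules out spurious constraints on $c$ beyond the trace condition and \eqref{eq:HF-2F:2}; the rest is bookkeeping controlled by the invertibility hypotheses.
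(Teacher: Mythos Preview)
Your proof is correct and the overall architecture is sound, but it takes a genuinely different route from the paper at the key step of disposing of the ``extra'' bracket relation $[H,E]=2E$.

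The paper proceeds purely by coefficient matching: it writes out \emph{both} systems \eqref{eq:HF-2F} and \eqref{eq:EFH} in full, uses \eqref{eq:EFH:2}--\eqref{eq:EFH:4} to solve for $b_1^{1,3}$, $b_1^{0,5}$ and then recursively for $b_1^{r,r+2}$, and finally checks by a direct (and, as they say, ``long but straightforward'') computation that \eqref{eq:HF-2F:1} and \eqref{eq:HF-2F:3} are consequences of \eqref{eq:coefH}, \eqref{eq:FyieldsH:3}, \eqref{eq:HF-2F:2} and \eqref{eq:EFH:4}. In other words, the redundancy of the remaining equations is established by explicit algebraic manipulation of the formulas for the $H$-coefficients coming from Lemma~\ref{lem:FyieldsH}.

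Your route replaces that computation by a structural argument: Jacobi gives $[\,[H,E]-2E,\,F\,]=0$, and you then appeal to injectivity of $\ad(F)$ on the subspace of $\uh_1$ with vanishing $p_1^Tp_1$-part. This lemma is not stated in the paper (Lemma~\ref{lem:adFinjective} treats $\uh_i'$ for $i\ge 2$), but it is true and its proof is exactly the recursion you sketch: the $q_1p_1$-component of $[T,F]$ forces $B_1=0$, the $q_rp_r$-component for $r\ge 3$ then kills $B_r$ inductively, and the $p_3$-component kills the $p_5$-coefficient. So your ``technical core'' is a close cousin of Lemma~\ref{lem:adFinjective} one degree lower, and in fact anticipates it.

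What each approach buys: the paper's direct verification is self-contained and avoids introducing an auxiliary lemma, at the cost of an opaque calculation tying together several coefficient identities. Your Jacobi-plus-injectivity argument is cleaner and more conceptual, makes transparent \emph{why} only the trace condition and \eqref{eq:HF-2F:2} survive as genuine constraints, and foreshadows the mechanism used later in Theorem~\ref{thm:W^+-sl2}; the price is the extra injectivity lemma, whose proof, however, is no harder than the computation it replaces.
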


\begin{proof}
%\emph{Step 1}. 
Giving a map $\sigma$ as in the r.h.s. is equivalent to set an operator $E\in\uh_1$, such that $[E,F] = H $ and $ [H,F]= -2F$. Consider:
     \begin{equation}\label{eq:E=type1}
     E\,=\,  b_1^{0,5} p_5 + p_1^T c_1^{1,1}p_1  + q_i  b_1^{i,i+2}  p_{i+2}\,\in\, \uh_1
     \end{equation}
where, for simplicity, we will set $c= c_1^{1,1}$. The identity $[H,E]= 2E$, expressed in terms of the coefficients of the operators, is equivalent to the following equations (thanks to the computations of \S\ref{subsect:CR}):
    \begin{subequations}
    \label{eq:HF-2F}
    \begin{align}
     3 b_{0}^{0,3}\eta^{-1} b_1^{3,5} - 5 b_{1}^{0,5}\eta^{-1} b_0^{5,5} %  (b_0^{1,1}-4)
     \, & =\,  2 b_1^{0,5}\label{eq:HF-2F:1}
    \\
    - (c^T + c) \eta^{-1}b_0^{1,1} - (\eta^{-1}b_0^{1,1})^T  (c^T + c)  
    \, & =\, 2 (c^T + c) \label{eq:HF-2F:2}
   % -(b_0^{1,1})^T (\eta^{-1}+ (\eta^{-1})^T) c_1^{1,1} - c_1^{1,1} (\eta^{-1}+ (\eta^{-1})^T) b_0^{1,1} 
    \\
    r b_0^{r,r}  \eta^{-1}  b_1^{r,r+2} - (r+2) b_1^{r,r+2} \eta^{-1}  b_0^{r+2,r+2}
    \,&=\, 2 b_1^{r,r+2} \label{eq:HF-2F:3}
    \end{align}
    \end{subequations}

Analogously, expanding the relation $[E,F]=H$ with the help of \S\ref{subsect:CR} yields the system:
    \begin{subequations}
    \label{eq:EFH}
    \begin{align}
    \label{eq:EFH:1}
    \operatorname{Tr}( c \eta^{-1} (a+a^T) (\eta^{-1})^T ) \, & = \,  b_0^{0,0}
    \\
    \label{eq:EFH:2}
    - b_{-1}^{0,1}\eta^{-1}  b_1^{1,3} + 5  b_1^{0,5} \eta^{-1} b_{-1}^{5,3} \, & = b_0^{0,3} % \, b_{-1}^{0,1}(b_0^{1,1}-2)
    \\
    \label{eq:EFH:3}
    3 b_1^{1,3}\eta^{-1} b_{-1}^{3,1}   +  (a+a^T) (  \eta^{-1})^T  (c+c^T )  \, & = \,  b_0^{1,1}
    \\
    \label{eq:EFH:4}
    (r+2) b_1^{r,r+2} \eta^{-1} b_{-1}^{r+2,r} - (r-2) b_{-1}^{r,r-2} \eta^{-1}   b_1^{r-2,r} \, & = \, b_0^{r,r} \quad\forall r>2
    \end{align}
    \end{subequations}
Having in mind the properties of the trace, one observe that these equations only depends on $c+c^T$. %; thus, from now on, we assume that $c$ is symmetric. 

It remains to show that equations \eqref{eq:HF-2F} and \eqref{eq:EFH} are equivalent to the conditions of the claim; that is, that they can be reduced to \eqref{eq:HF-2F:2} and \eqref{eq:EFH:1}. 

Assuming \eqref{eq:HF-2F:2} and \eqref{eq:EFH:1}, one gets $b_1^{1,3}$ from \eqref{eq:EFH:3}; then, $b_1^{0,5}$ is determined by \eqref{eq:EFH:2}; and, $b_1^{r,r+2}$ is obtained from \eqref{eq:EFH:4}. We claim that \eqref{eq:HF-2F:1} is fulfilled too. Indeed, a long but straightforward computation shows that \eqref{eq:HF-2F:1} is derived from \eqref{eq:coefH}, \eqref{eq:HF-2F:2} together with  the case $r=3$ of \eqref{eq:EFH:4}. Similarly,  \eqref{eq:HF-2F:3} follows from  \eqref{eq:coefH}, \eqref{eq:FyieldsH:3}  and \eqref{eq:EFH:4}.
\end{proof}

%%%%%%%%%%%%%%%%%%%%%%%%%%%%%%%%%%%%%%%%%%%%%
\subsection{Extending to ${\mathcal W}_>$}\label{subsect:W>}

In order to extend a map defined on $\sl(2)$ to one on ${\mathcal W}_>$, one should choose an endomorphism $T$, define $\rho(L_i)$ by equations~\eqref{eq:rho(L_2)def} and \eqref{eq:rho(L_i)def} and check infinitely many constraints (see~\cite{PT}). However, in our situation the following Lemma simplifies that approach drastically; there will exist a unique $T$ satisfying all the requirements. 

\begin{lem}\label{lem:adFinjective}
Let $F,H$ be as in equations~\eqref{eq:type-1}   and~\eqref{eq:type0}. The map:
	$$\ad(F):\uh'_i \overset{\sim}\longrightarrow \uh'_{i-1}
	$$
is an isomorphism for $i\geq 2$.
\end{lem}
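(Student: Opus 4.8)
The plan is to prove that $\ad(F)\colon \uh'_i \to \uh'_{i-1}$ is an isomorphism for $i \geq 2$ by exhibiting both spaces explicitly in terms of their coefficient data and checking that the bracket with $F$ induces a bijection on coefficients. First I would record, exactly as in Lemma~\ref{lem:FyieldsH} and Theorem~\ref{thm:sl2-symmmatrices}, that an element $T\in\uh_i$ is determined by a row vector, a symmetric-type matrix $c_i^{j,2i-j}$, and the matrices $b_i^{j,2i+j}$, and that the defining condition $[H,T]=2iT$ of $\uh'_i$ cuts these coefficients down by linear relations of exactly the form \eqref{eq:HF-2F}. The point of working inside $\uh'_i$ rather than $\uh_i$ is that the eigenspace condition for $\ad(H)$ forces the relations among the $b$-coefficients that make the map with $\ad(F)$ rigid; by Lemma~\ref{lem:T'T'bracket} we already know $\ad(F)$ sends $\uh'_i$ into $\uh'_{i-1}$, so the content is bijectivity.

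Next I would compute $\ad(F)=[F,-]$ on a general $T\in\uh'_i$ in coefficients, using the commutation relations of \S\ref{subsect:CR} in the same manner as the two preceding proofs. Schematically, bracketing the double-product part $q_j b_i^{j,2i+j} p_{2i+j}$ of $T$ against the three pieces of $F$ from \eqref{eq:type-1} produces the double products of degree $-2(i-1)$ that make up the target, and the leading coefficients of the image are governed by terms of the shape $b_i^{j,2i+j}\,\eta^{-1}\,b_{-1}^{k+2,k}$ with the invertible blocks $b_{-1}^{k+2,k}$ appearing on one side. Because these blocks and $\eta$ are invertible (the standing hypothesis inherited from Lemma~\ref{lem:FyieldsH}), the linear map on coefficients is, up to triangular lower-order corrections, multiplication by invertible matrices, hence invertible. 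I would make this precise by ordering the coefficient data so that the coefficient linear system is triangular: solve first for the highest-index $b$-block of the preimage in terms of the image, then descend, at each stage inverting a factor built from $\eta^{-1}b_{-1}^{\bullet}$.

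The cleanest way to finish, and the one I would actually push, is to avoid computing the map outright and instead prove injectivity and surjectivity separately, then use that $\dim\uh'_i=\dim\uh'_{i-1}$ so that either one suffices. For injectivity: if $[F,T]=0$ with $T\in\uh'_i$, read off from the top coefficient equation (the one carrying the invertible $b_{-1}$-block) that the leading $b$-coefficient of $T$ vanishes, then cascade downward through the degree-graded relations to conclude $T=0$. Dimension equality of the two eigenspaces should follow from the structural description in Theorem~\ref{thm:sl2-symmmatrices}: both $\uh'_i$ and $\uh'_{i-1}$ are parametrized by the same free datum, namely a symmetric matrix $c$ subject to \eqref{eq:HF-2F:2} (together with the trace normalization of $b_0^{0,0}$), which is independent of $i$, so the two eigenspaces are finite-dimensional of equal dimension.

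The main obstacle I anticipate is the bookkeeping at the boundary of the index range, precisely the reason the statement is restricted to $i\geq 2$. For small $i$ the extra pieces allowed in the type-$i$ operators — the leading $p_{2i+3}$ term governed by a row vector, the constant allowed only in type $0$, and the symmetric double-product block $p_j^T c_i^{j,2i-j} p_{2i-j}$ — interact differently with $F$, and one must check that the triangular elimination does not break down because a would-be invertible factor degenerates or an index falls out of the admissible set $\{j \text{ odd}, j\geq 1\}$. For $i\geq 2$ the image $\uh'_{i-1}$ still lies in the ``generic'' range $i-1\geq 1$ where no constant term appears and all the $b$-blocks are present, so the elimination stays uniform; I would isolate and dispatch these edge cases explicitly, confirming that the surjectivity onto the symmetric $c$-datum and the $b$-blocks holds with no missing equations, which is exactly where the hypothesis $i\geq 2$ is used.
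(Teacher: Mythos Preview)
Your first two paragraphs are on the right track and essentially match the paper's proof: compute $[F,T]=S$ in coefficients, observe that the resulting linear system is triangular with pivots built from the invertible blocks $\eta^{-1}b_{-1}^{k+2,k}$, and solve. The paper does this on the \emph{unprimed} spaces $\uh_i\to\uh_{i-1}$, obtaining a two-sided inverse, and then checks separately---via the Jacobi identity and the injectivity just established---that the preimage of any $S\in\uh'_{i-1}$ lies in $\uh'_i$. One small caveat: there is no ``highest-index $b$-block'' since there are infinitely many $b_i^{j,j+2i}$; the elimination runs upward in $j$, as in the recursions~\eqref{eq:coefH}.

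The genuine gap is the dimension argument you pivot to in the third paragraph. The spaces $\uh'_i$ are \emph{not} finite-dimensional. Already $\uh_i$ carries the infinitely many matrix coefficients $b_i^{r,r+2i}$ for odd $r\ge 1$, and the eigenspace condition $[H,T]=2iT$ imposes on each of them only a Sylvester-type relation of the shape
\[
r\,b_0^{r,r}\eta^{-1}b_i^{r,r+2i}-(r+2i)\,b_i^{r,r+2i}\eta^{-1}b_0^{r+2i,r+2i}\,=\,2i\,b_i^{r,r+2i},
\]
which does not couple different $r$'s and need not cut anything down; in Example~\ref{exam:dim1}, where $b_0^{r,r}=-\eta$ for all $r$, the condition is vacuous and $\uh'_i=\uh_i$. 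Your appeal to Theorem~\ref{thm:sl2-symmmatrices} is also a misreading: that theorem does not parametrize $\uh'_1$ but the affine fiber $\{E\in\uh'_1:[E,F]=H\}$, which is already the \emph{image} of an $\ad(F)$-type constraint and hence much smaller. So injectivity alone cannot yield surjectivity here; you must carry the explicit elimination through to produce the inverse, and then run the Jacobi-identity check to see that it respects the primed subspaces, as the paper does.
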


\begin{proof}
First, one has to prove that given an operator:
	$$
	S\,:=\, b_{i-1}^{0,2i+1} p_{2i+1} \, +\, p_j^T c_{i-1}^{j,2i-j-2} p_{2i-j-2} \, +\, q_j b_{i-1}^{j,j+2i-2} p_{j+2i-2}\,\in\,\uh_{i-1}
	$$
of type $i-1\geq 1$, there is exactly one operator:
	$$
	T\,:=\, b_i^{0,2i+3} p_{2i+3} \, +\, p_j^T c_i^{j,2i-j} p_{2i-j} \, +\, q_j b_i^{j,j+2i} p_{j+2i}\,\in\,\uh_i
	$$
of type $i$ satisfying $\ad(F)(T)=S$ where  $\ad$ denotes the adjoint representation and $F$ is given by equation~\eqref{eq:type-1}.

Now, one proceeds as in the proof of Lemma~\ref{lem:FyieldsH} and shows that  $\ad(F)(T)=[F,T]=S$ has exactly one solution. 

%is equivalent to the following linear system:
%	{\small	
%	\begin{equation}\label{eq:CoefTintermsS}
%	\begin{aligned}
%	b_{-1}^{0,1} b_i^{1,2i+1} \,-\,  b_i^{0,2i+3} & \,=\,  b_{i-1}^{0,2i+1}
%	\\
%	- 2 c_i^{j,2i-j}  & \,=\, c_{i-1}^{j-2, 2i-j}  \qquad \forall j\geq 3
%	\\
%	- 4 a  c_i^{1,2i-1}  -   b_i^{1,2i+1} & \,=\, b_{i-1}^{1,2i-1}
%	\\
%	b_i^{j,j+2i} -  b_i^{j+2,j+2i+2} & \,=\, b_{i-1}^{j+2,j+2i} \qquad \forall j\geq 1
%	\end{aligned}
%	\end{equation}}
%where we have used the assumptions that $a $ is symmetric and that $c_i^{j,2i-j}=(c_i^{2i-j,j})^T$ (note that $c_i^{1,2i-1}=(c_i^{2i-1,1})^T$ and, hence, equal to $c_{i-1}^{1,2i-3}$). It is obvious that given $S$,  the previous system has a unique solution.

Finally, let us check that if $S\in\uh'_{i-1}$  and $\ad(F)(T)=S$, then $T\in\uh'_{i}$. Using the injectivity of $\ad(F)$ and the relation:
    $$
    \begin{aligned}
    \ad(F)(\ad(H)(T))
    \,& =\,\ad(H)(\ad(F)(T)) + \ad([F,H])(T)
    \,= \\ & =\, \ad(H)(S) + \ad(2F)(T)
    \,=\, 2(i-1)S + 2S\,=\, 2 i S
    \end{aligned}
    $$
one obtains $\ad(H)(T)=2iT$, as we wanted.
\end{proof}

\begin{thm}\label{thm:W^+-sl2}
Let $F$ be as in \eqref{eq:type-1} where  $a $ is symmetric and $b_{-1}^{i,i-2}$ are invertible.

Then, the map $\iota^*$ of \eqref{eq:emb-sl2-W+} yields a bijection:
    {\small
    $$
    \left\{
        \begin{gathered}
        \rho \in \homlie({\mathcal W}_>,\uh)
        \\
         \text{such that }
        \rho(L_{-1})=F
        \\
        \text{ and } % , \rho(L_0)=\frac12 H, % \rho(L_1)=-F
        % \\
        \rho(L_i) \in\uh_i \text{ for }i\geq 0 % -F\text{ and $\rho(L_2)$ is of type $2$}
        \end{gathered}
    \right\}
    \, \overset{\sim}\longrightarrow\,
    \left\{
        \begin{gathered}
        \sigma \in \homlie(\sl(2),\uh) \\ %  \text{ s.t.}  \\
         \text{such that }
        \sigma(f)=F\, ,
        \\
        \sigma(h) \in\uh_0 \text{ and }\sigma(e)\in\uh_1
        \end{gathered}
    \right\}
    $$}
\end{thm}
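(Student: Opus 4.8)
The plan is to establish the bijection by constructing the inverse of the restriction map $\iota^*$. Given $\sigma\in\homlie(\sl(2),\uh)$ with $\sigma(f)=F$, $\sigma(h)=H\in\uh_0$ and $\sigma(e)=E\in\uh_1$, I must produce a \emph{unique} $\rho\in\homlie({\mathcal W}_>,\uh)$ with $\rho(L_{-1})=F$ and $\rho(L_i)\in\uh_i$ for $i\geq 0$ that restricts to $\sigma$ under the embedding $\iota$ of \eqref{eq:emb-sl2-W+}. Since $\iota$ sends $f\mapsto L_{-1}$, $h\mapsto -2L_0$, $e\mapsto -L_1$, any such $\rho$ must satisfy $\rho(L_{-1})=F$, $\rho(L_0)=-\tfrac12 H$ and $\rho(L_1)=-E$, so these three values are forced. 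The content of the theorem is that the remaining values $\rho(L_i)$ for $i\geq 2$ exist and are uniquely determined, and that the resulting map respects all the Witt brackets $[L_i,L_j]=(i-j)L_{i+j}$.

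First I would define the higher operators inductively. The key mechanism is Lemma~\ref{lem:adFinjective}: for each $i\geq 2$ the map $\ad(F):\uh'_i\to\uh'_{i-1}$ is an isomorphism, where $\uh'_i=\{T\in\uh_i : [H,T]=2iT\}$. In the Witt algebra one has $[L_{-1},L_{i}]=(-1-i)L_{i-1}$, equivalently $\ad(F)(\rho(L_i))=[F,\rho(L_i)]=-(i+1)\rho(L_{i-1})$. Thus, assuming $\rho(L_{i-1})\in\uh'_{i-1}$ is already constructed, I would \emph{define} $\rho(L_i)$ to be the unique preimage
	$$
	\rho(L_i)\,:=\,-\frac1{i+1}\,\ad(F)^{-1}\big(\rho(L_{i-1})\big)\,\in\,\uh'_i.
	$$
The base of the induction is the observation that $\rho(L_1)=-E\in\uh'_1$, since $[H,E]=2E$ is precisely one of the defining relations of $\sigma$ (equation~\eqref{eq:HF-2F}). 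That the images land in $\uh'_i$, not merely in $\uh_i$, is exactly what Lemma~\ref{lem:adFinjective} guarantees, and this is what makes the bracket $[L_0,L_i]$ automatic: $\rho(L_i)\in\uh'_i$ means $[H,\rho(L_i)]=2i\,\rho(L_i)$, i.e. $[-2L_0,\rho(L_i)]$ has the correct eigenvalue.

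Having defined $\rho$ on all generators, the remaining task is to verify that $\rho$ is a Lie algebra homomorphism, i.e. that $[\rho(L_i),\rho(L_j)]=(i-j)\rho(L_{i+j})$ holds for all $i,j\geq -1$. The relations involving an index $-1$ or $0$ are built into the construction: the $L_{-1}$ relations are the defining recursion, the $L_0$ relations follow from $\rho(L_i)\in\uh'_i$, and the three $\sl(2)$ relations among $L_{-1},L_0,L_1$ hold because $\sigma$ is a homomorphism. For the general relation I would argue by induction on $i+j$ using the Jacobi identity to reduce a bracket $[\rho(L_i),\rho(L_j)]$ to brackets with a lower-index generator, exploiting that $\ad(F)$ is injective on the relevant $\uh'_k$. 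Concretely, if $K:=[\rho(L_i),\rho(L_j)]-(i-j)\rho(L_{i+j})$, one computes $\ad(F)(K)$ via the Jacobi identity and the already-established relations for $L_{-1}$, finding $\ad(F)(K)$ equals the analogous expression at level $i+j-1$, which vanishes by the inductive hypothesis; injectivity of $\ad(F)$ on $\uh'_{i+j}$ then forces $K=0$. Uniqueness of $\rho$ is immediate, since the recursion $\ad(F)(\rho(L_i))=-(i+1)\rho(L_{i-1})$ together with the isomorphism property of $\ad(F)$ leaves no freedom once $\rho(L_{-1}),\rho(L_0),\rho(L_1)$ are fixed.

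The main obstacle I anticipate is the verification of the general bracket relation $[\rho(L_i),\rho(L_j)]=(i-j)\rho(L_{i+j})$: the inductive Jacobi-identity argument must be set up so that applying $\ad(F)$ genuinely lowers the total degree to a case already known to vanish, and one must confirm that the ``defect'' $K$ lies in the subspace $\uh'_{i+j}$ on which $\ad(F)$ is injective (for $i+j\geq 2$), while treating the low-degree cases $i+j\leq 1$ separately using the $\sl(2)$ relations. This is precisely the step where the full strength of Lemma~\ref{lem:adFinjective}—that $\ad(F)$ is an isomorphism $\uh'_{i+j}\xrightarrow{\sim}\uh'_{i+j-1}$, not merely injective—is used, and where the hypothesis that $a$ is symmetric and the $b_{-1}^{i,i-2}$ are invertible enters through that lemma.
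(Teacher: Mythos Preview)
Your approach is correct and takes a genuinely different route from the paper's in the recursive construction of $\rho(L_i)$ for $i\geq 2$. You define every $\rho(L_i)$ as the preimage of $\rho(L_{i-1})$ under $\ad(F)$, which makes the $L_{-1}$-relations tautological and leaves only the relations among $\rho(L_i)$ with $i\geq 1$ to be checked by the Jacobi/injectivity argument you sketch. The paper instead invokes $\ad(F)^{-1}$ only once, to produce $\rho(L_2)$, and then sets $\rho(L_i):=\tfrac{1}{i-2}[\sigma(e),\rho(L_{i-1})]=\tfrac{1}{i-2}[-\rho(L_1),\rho(L_{i-1})]$ for $i\geq 3$; this builds in the $L_1$-relations by definition and then requires a separate induction (the paper's Step~3) to recover the $L_{-1}$-relations before running essentially the same $\ad(F)$-injectivity argument you describe (Step~4). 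Your route is more uniform and eliminates Step~3 altogether; the paper's route has the practical advantage that the recursion is an explicit bracket rather than an inverse, so each $\rho(L_i)$ is directly computable. Both arguments rest on Lemma~\ref{lem:adFinjective} in the same essential way, and both need Lemma~\ref{lem:T'T'bracket} to ensure the defect $K$ lies in $\uh'_{i+j}$.

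One small slip: from $\ad(F)(\rho(L_i))=-(i+1)\rho(L_{i-1})$ the correct inversion is $\rho(L_i)=-(i+1)\,\ad(F)^{-1}(\rho(L_{i-1}))$, not $-\tfrac{1}{i+1}$. This does not affect the strategy.
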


\begin{proof}
Given $\rho$, we define $\sigma:=\iota^*(\rho)$ and, therefore, $\sigma(f)=\rho(\iota(f))=\rho(L_{-1})$, $\sigma(h)=\rho(-2L_0)$ and $\sigma(e)=\rho(-L_1)$.

For the converse, one requires several steps and the previous Lemmas.

\emph{Step 1.} Let  $\sigma$ be given. There exists a $\C$-linear homomorphism $\rho:{\mathcal W}_>\to \uh$ such that $\sigma=\iota^*(\rho)$. First, we set:
    $$
    \rho(L_{-1}):= \sigma(f)=F
    \; , \quad
    \rho(L_0):= -\frac12 \sigma(h)
    \; , \quad
    \rho(L_1):= -\sigma(e)
	$$
The fact that $\sigma$ is a map of Lie algebras and Lemma~\ref{lem:FyieldsH} imply that:
	$$
	\rho(L_0)\,=\, -\frac12H
	$$
where $H$ is as in equation~\eqref{eq:type0}. Furthermore, it holds that $\rho(L_{i})\in\uh'_i$ for $i=-1,1$. Having in mind Lemma~\ref{lem:adFinjective} we obtain that there is a unique $T\in\uh'_2$ such that:	
	$$
	\operatorname{ad}(\rho(L_{-1}))(T) \,=\, \rho(L_1)
	$$

Then, we define:
	\begin{equation}\label{eq:rho(L_2)def}
	\rho(L_2)\,:= \, -3 T \,\in \, \uh'_2
	\end{equation}
and, recursively,
    \begin{equation}\label{eq:rho(L_i)def}
    \rho(L_i):= \frac1{i-2}[\sigma(e),\rho(L_{i-1})]
    \qquad \text{for }i>2
    \;.
    \end{equation}

\emph{Step 2.} It holds that $ [\rho(L_{0}),\rho(L_j)] = -j\rho(L_{j})$ for $j\geq -1$. This is equivalent to show that
$\rho(L_j) \in  \uh'_j$ for all $j\geq 1$.  Bearing in mind that $\rho(L_1)\in\uh'_1$ and Lemma~\ref{lem:T'T'bracket}, the conclusion follows.

\emph{Step 3.} It holds that $ [\rho(L_{-1}),\rho(L_j)] = -(1+j)\rho(L_{j-1})$ for $j\geq -1$. The cases $j\leq 1$ follow from the fact that $\sigma$ is a homomorphism of Lie algebras.  The choice of $T$ implies the case $j=2$. Let us proceed by induction on $j$.  For $j\geq 3$, the definition of $\rho(L_i)$, the Jacobi identity and the induction hypothesis yield:
    {\small $$
    \begin{aligned}\;
    [\rho(L_{-1}), & \rho(L_j)]
    \,  =\,
    [\rho(L_{-1}),\; -\frac1{j-2}[\rho(L_{1}),\rho(L_{j-1})]]
    \, = \\ & = \,
    \frac1{j-2}\Big( [\rho(L_{1}),\; [\rho(L_{j-1}), \rho(L_{-1})]] \;+\;
    [\rho(L_{j-1}),\; [\rho(L_{-1}), \rho(L_{1})]]\Big)
    \, = \\ & = \,
    \frac1{j-2}\Big( [\rho(L_{1}),\; j \rho(L_{j-2})] \;+\;
     [\rho(L_{j-1}),\; (-2) \rho(L_{0})]\Big)
     \, = \\ & = \,
    \frac1{j-2}\big( j(3-j) \rho(L_{j-1}) -2 (j-1)\rho(L_{j-1})\big)
    \,=\, (-1-j)\rho(L_{j-1})
    \end{aligned}
    $$}

\emph{Step 4.} The identity:
    \begin{equation}\label{eq:braket-hom}
    [\rho(L_i),\rho(L_j)]\,-\,(i-j)\rho(L_{i+j}) \,=\,0
    \end{equation}
holds for $i,j\geq 1$. We proceed by induction on $n=i+j$. The case $n=4$ (i.e. $i,j\geq 1$ and $i+j=4$) holds by the very definition of $\rho(L_4)$. Now, let us assume that it holds true up to  $n-1=i+j-1$ and let us prove the case $n=i+j>4$. Observe that, by Step 2, the l.h.s of the equation \eqref{eq:braket-hom} lies in $\uh_{i+j}'$. By Lemma~\ref{lem:adFinjective}, it suffices to show that its image under $\ad(F)=\ad(\rho(L_{-1}))$ vanishes. In fact, the Jacobi identity, the Step 3 and the induction hypothesis show that:
	{\small $$
	\begin{aligned}
	\ad & (\rho( L_{-1}))\big(   [\rho(L_i),\rho(L_j)]\,-\,(i-j)\rho(L_{i+j})\big) \, = \\ & = \,
	 [[\rho(L_{-1}), \rho(L_i)],\rho(L_j)] \,+\,  [\rho(L_i), [\rho(L_{-1}),\rho(L_j)]]
	 \,-\, (i-j)[\rho(L_{-1}),\rho(L_{i+j})]
	 \, = \\ & = \,
	 [-(1+i)\rho(L_{i-1}),\rho(L_j)] \,+\, [\rho(L_i), -(1+j)\rho(L_{j-1})] \,+\, (i-j)(1+i+j) \rho(L_{i+j-1})
	 \, = \\ & = \,
	 - \big( (1+i)(i-j-1) \,+\, (1+j)(i-j+1)  \,-\, (i-j)(1+i+j) \big) \rho(L_{i+j-1})
	 \, = \\ & = \,
	0
	 \end{aligned}
	 $$}

\emph{Step 5.} $\rho$ is a Lie algebra homomorphism. This follows from the properties of $\sigma$ and Steps 2, 3, 4. 	
	
\end{proof}

%%%%%%
\subsection{Factorization as a product}

It is remarkable that if the vector space $(A, (\, ,\,))$ decomposes as $A_1\perp A_2$ (i.e. $A=A_1\oplus A_2$ and $(a_1,a_2)=0$ for all $a_i\in A_i$), then the very definition of the associated Heisenberg algebra implies that:
	$$
	{\mathbb H}(A) \,\simeq\, {\mathbb H}(A_1)  \widehat\otimes_{\C} {\mathbb H}(A_2) 
	$$
as Lie algebras and ${\mathbb H}(A_i)$ is a subalgebra of ${\mathbb H}(A)$. So, we may wonder under which circumstances a morphism
$\rho :{\mathcal W}_> \to \uh[(A)]$ would decompose accordingly. The following Theorem provides an answer in terms of the restriction $\rho\vert_{\sl(2)}$. For this goal, recall that matrices $a$, $b$ and $c$'s behave as bilinear forms on $A$ (w.r.t. the action of $\Gl(A)$).

\begin{thm}\label{thm:rhodecomp}
Let $F, H, E$ be as in~\eqref{eq:type-1}, \eqref{eq:type0} and \eqref{eq:E=type1}. Let $\rho :{\mathcal W}_> \to \uh[(A)]$ satisfy $\rho(L_{-1})=F$, $\rho(L_0)=-\frac12 H$ and $\rho(L_1)=-E$. 

If the vector space $A$ decomposes as $A_1\perp A_2$ w.r.t. $\eta$ and this decomposition is compatible with the action of $F$ and with the bilinear forms  $b_0^{1,1}$ and  $c_1^{1,1}$, then there are Lie algebra maps $\rho_{i} :{\mathcal W}_> \to \uh[(A_i)]$ for $i=1,2$ such that:
	$$
	\rho \,=\, \rho_1\otimes 1 + 1\otimes \rho_2
	$$

If this is the case, and $\rho(L_k)\in \uh[(A)]_k'$ for all $k\geq -1$, then $\rho_{i}(L_k)\in \uh[(A_{i})]_k'$ for all $k\geq -1$ and $i=1,2$. 
\end{thm}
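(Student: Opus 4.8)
The plan is to transport the weight relation $[H,-]=2k\,(-)$ across the tensor decomposition $\rho=\rho_1\otimes 1+1\otimes\rho_2$ furnished by the first assertion of the theorem, exploiting that $\uh[(A_1)]$ and $\uh[(A_2)]$ sit inside $\uh[(A)]$ as commuting subalgebras that meet only in the scalars. First I would set $H_i:=-2\rho_i(L_0)\in\uh[(A_i)]_0$; then $\rho(L_0)=-\tfrac12 H$ together with $\rho=\rho_1\otimes 1+1\otimes\rho_2$ forces $H=H_1\otimes 1+1\otimes H_2$. Since the maps $\rho_i$ produced in the first part arise as extensions through Theorem~\ref{thm:W^+-sl2}, they also satisfy $\rho_i(L_k)\in\uh[(A_i)]_k$ for every $k\geq -1$.

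Next I would fix $k\geq -1$ and unwind the hypothesis. By the definition of $\uh[(A)]_k'$ in Lemma~\ref{lem:T'T'bracket}, $\rho(L_k)\in\uh[(A)]_k'$ means $[H,\rho(L_k)]=2k\,\rho(L_k)$. Substituting $H=H_1\otimes 1+1\otimes H_2$ and $\rho(L_k)=\rho_1(L_k)\otimes 1+1\otimes\rho_2(L_k)$, and using that the cross brackets between $\uh[(A_1)]$ and $\uh[(A_2)]$ vanish, the left-hand side collapses to $[H_1,\rho_1(L_k)]\otimes 1+1\otimes[H_2,\rho_2(L_k)]$. Subtracting $2k\,\rho(L_k)$ then yields
$$
X\otimes 1+1\otimes Y\,=\,0,
$$
where $X:=[H_1,\rho_1(L_k)]-2k\,\rho_1(L_k)\in\uh[(A_1)]$ and $Y:=[H_2,\rho_2(L_k)]-2k\,\rho_2(L_k)\in\uh[(A_2)]$.

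It then remains to deduce $X=Y=0$, which would be exactly $\rho_i(L_k)\in\uh[(A_i)]_k'$. The one delicate point, and the step I expect to be the main obstacle, is that the map $(X,Y)\mapsto X\otimes 1+1\otimes Y$ is not injective: by the PBW theorem an identity $X\otimes 1=-\,1\otimes Y$ forces both sides to be multiples of the unit, so only $X=c\cdot 1$, $Y=-c\cdot 1$ with $c\in\C$ can be concluded immediately. To kill this constant I would split into two cases. If $k\neq 0$, then $[H_1,\rho_1(L_k)]\in\uh[(A_1)]_k$ by \eqref{eq:LieT} (as $H_1\in\uh[(A_1)]_0$ and $\rho_1(L_k)\in\uh[(A_1)]_k$), so $X\in\uh[(A_1)]_k$; but a type-$k$ operator with $k\neq 0$ carries no constant term, forcing $c=0$. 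If $k=0$, then $\rho_i(L_0)=-\tfrac12 H_i$ gives $X=[H_1,-\tfrac12 H_1]=0$ outright. In both cases $X=Y=0$, i.e. $[H_i,\rho_i(L_k)]=2k\,\rho_i(L_k)$ for $i=1,2$ and all $k\geq -1$; the remainder is just the formal passage of the bracket identity through the factorization.
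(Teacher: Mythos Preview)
Your proposal addresses only the second assertion of the theorem (the passage from $\rho(L_k)\in\uh[(A)]_k'$ to $\rho_i(L_k)\in\uh[(A_i)]_k'$), and takes the first assertion --- the existence of the decomposition $\rho=\rho_1\otimes 1+1\otimes\rho_2$ with $\rho_i$ Lie algebra maps --- as a black box (``furnished by the first assertion''). But that first assertion is the main content of the theorem and is what you were asked to prove. Nothing in your write-up explains why the orthogonal splitting $A=A_1\perp A_2$, together with the block-diagonality of $a$, $b_0^{1,1}$, $c_1^{1,1}$, forces \emph{all} the higher coefficient matrices $b_i^{r,r+2i}$, $c_i^{j,2i-j}$ of $\rho(L_k)$ to be block-diagonal as well, nor why the resulting pieces $\rho_i$ are themselves Lie algebra homomorphisms. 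The paper spends Steps~1--5 on exactly this: it shows inductively, via the recursive determination of $\rho(L_k)$ from $F$, $H$, $E$ (equations~\eqref{eq:FyieldsH}, \eqref{eq:EFH}, \eqref{eq:HF-2F} and the inverse of $\ad(F)$ in Lemma~\ref{lem:adFinjective}), that every coefficient matrix respects the block structure, and then checks the bracket relations for the two pieces separately. Your appeal to Theorem~\ref{thm:W^+-sl2} does not substitute for this, since that theorem presupposes you already have the decomposed $F_i$, $H_i$, $E_i$ living in $\uh[(A_i)]$.

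For the part you \emph{do} address, your argument is correct and in fact more careful than the paper's. The paper's Step~6 simply says ``expand the Lie bracket $[\rho(L_0),\rho(L_k)]$ using $\rho(L_k)=\rho_1(L_k)\otimes 1+1\otimes\rho_2(L_k)$'' and stops there; you have correctly isolated the only subtlety, namely that $X\otimes 1+1\otimes Y=0$ forces $X$, $Y$ to be opposite scalars rather than zero outright, and you dispose of that scalar by a clean type/degree argument (no constant term in $\uh[(A_i)]_k$ for $k\neq 0$) together with the trivial case $k=0$. That refinement is a genuine improvement over the paper's terse Step~6, but it does not compensate for the missing Steps~1--5.
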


\begin{proof}
\emph{Step 1}. The case of $\rho(L_{-1})$. The hypothesis says that we can find $\{a_{\alpha}\vert \alpha=1,\ldots,n\}$, a basis of $A$, and an index $m$ such that, for $1\leq i <m \leq j\leq n$, the vectors $a_i$ and $a_j$ are orthogonal w.r.t. to the bilinear form defined by $a $.  Equivalently, w.r.t. the splitting $A_1\oplus A_2$ the matrix of this bilinear form acquires a block decomposition as follows:
	$$
	a  \,=\, 
	\begin{pmatrix} \ast  & 0 \\ 0 & \ast \end{pmatrix}
	$$
It is now straightforward that the terms of the operator $F$ (as given in equation~\eqref{eq:type-1}) can be grouped in two sets, the first one involving $p_{i,\alpha}$ and $q_{i,\alpha}$ for $i\in{\mathbb N}$ and $1\leq \alpha <m$, and the second one depending only on $p_{i,\alpha}$ and $q_{i,\alpha}$ for $i\in{\mathbb N}$ and $m\leq \alpha \leq n$. Denote these operators as $\bar L_{-1,1}$ and $\bar L_{-1,2}$ respectively. One checks that:
	\begin{equation}\label{eq:barL_-1}
	\begin{gathered}
	\rho( L_{-1}) = \bar L_{-1,1}\otimes 1 + 1\otimes  \bar L_{-1,2}
	\\
	\bar L_{-1,\alpha}\,\in \, \uh[(A_{\alpha})]_{-1}\qquad\alpha=1,2.
	\end{gathered}
	\end{equation}
	
\emph{Step 2}. The case of $\rho(L_{0})$. Bearing in mind that it is defined as $-\frac12H$ and that the coefficients of the latter fulfill the relations \eqref{eq:FyieldsH}, one can proceed as in the previous case. More precisely, considering the following block decompositions:
	$$
		\eta  \,=\, 
	\begin{pmatrix} \eta_1  & 0 \\ 0 & \eta_2 \end{pmatrix}
	\qquad
		a  \,=\, 
	\begin{pmatrix} a_1  & 0 \\ 0 & a_2 \end{pmatrix}
	\qquad
		c_{1}^{1,1} \,=\, 
	\begin{pmatrix} c_1  & 0 \\ 0 & c_2 \end{pmatrix}
	$$
one may use the following identity as a defining relation for $\bar L_{0,\alpha} \in \uh[(A_{\alpha})]_0$:
	$$
	\begin{aligned} 
	\rho(L_0)-b_0^{0,0} 
	\, & =\, \big( \bar L_{0,1}- 2\Tr( c_{1}\eta_1^{-1}( a_{1}+a_1^T)(\eta_1^T)^{-1} )\big) 
	\, + \\ & \qquad +\, 
	 \big( \bar L_{0,2}- 2\Tr( c_{2}\eta_2^{-1} (a_{2}+a_2^T)(\eta_2^T)^{-1})\big)  
	\end{aligned}
	$$
%Note, however, that constant terms of $\bar L_{0,1}$ and $\bar L_{0,2}$ can be arbitrarily chosen as long as their sum is $b_0^{0,0}$. They will be determined later on. 

\emph{Step 3}. The case of $\rho(L_{k})$ for $k\geq 1$.  Recall from the proof of Theorem~\ref{thm:sl2-symmmatrices} that the coefficients $b_1^{r,r+2}$ of $\rho(L_{1})$ can be expressed in terms of $a $, $b_0^{1,1}$ and $c_1^{1,1}$ and that a close look of these expressions shows that $b_1^{r,r+2}$ are compatible w.r.t. to the splitting of $A$. Thus, we can express $\rho(L_1)$ as the sum of two factors; namely, $\bar L_{1,\alpha}$ for $\alpha=1,2$ which consists of the terms of $\rho(L_{1})$ in   $p_{i,\alpha}, q_{i,\alpha}$ for  $1\leq \alpha <m$ and  for $m\leq \alpha \leq n$, respectively. Now, we proceed as above. 

For the case of $\rho(L_k)$ for $k\geq 2$ one proceeds recursively (using the expressions of the proof of Lemma~\ref{lem:adFinjective}).

%\emph{Step 4}. 
%The constant terms of $\bar L_{0,1}$ and $\bar L_{0,2}$. Recall that the constant terms of these operators were arbitrary with the constraint that their sum were $b_0^{0,0}$. Indeed, once we know that $\rho(L_{k})$ decompose for all $k$, the easiest way is to consider:
%		$$
%	\bar L_{0,\alpha} \,:= -\frac12 [ \bar L_{-1,\alpha}, \bar L_{1,\alpha}] 
%	\qquad\alpha=1,2
%	$$
%

\emph{Step 4}. 
$[ \bar L_{k,\alpha} ,  \bar L_{l,\beta}]=0$ for $k,l \geq -1$ and  $\alpha\neq\beta$, since these two operators involve disjoint sets of variables. 

\emph{Step 5}. The maps $\rho_{\alpha}$. Consider:
	$$
	\rho_{\alpha}(L_k)\,:=\,
	\bar L_{k,\alpha} 
	\qquad\text{for $k\geq -1$ and $\alpha=1,2$}
	$$
The previous steps show that $\rho = \rho_1\otimes 1 + 1\otimes \rho_2$.
%  and that $\rho_{\alpha}(L_k)(\Vkdv(A_{\alpha}))\subseteq \Vkdv(A_{\alpha})$. 

It remains to check that $\rho_{\alpha}$ are morphisms of Lie algebras. For this goal we will expand both sides of the identity $[\rho(L_k),\rho(L_l)]=(k-l)\rho(L_{k+l})$ using the above facts. The l.h.s. is:
	{$$
	\, [\rho(L_k),\rho(L_l)]\,=\, 
	[\bar L_{k,1}+\bar L_{k,2} , \bar L_{l,1}+\bar L_{l,2}]
	\,=\, 
	[\bar L_{k,1} , \bar L_{k,1}]  + [\bar L_{k,2} , \bar L_{l,2}]
	$$}
while the r.h.s. reads:
	{$$
	(k-l)\rho(L_{k+l}) \,=\, 
	(k-l)( \bar L_{k+l,1}+\bar L_{k+l,2})
	$$}
Comparing both expressions and having in mind the separation of variables, it follows that:
	{$$
	[\bar L_{k,\alpha} , \bar L_{k,\alpha }]  \,=\, 
	(k-l) \bar L_{k+l,\alpha}
	$$}
\noindent and we conclude that $\rho_{\alpha}$ is a map of Lie algebras ${\mathcal W}_> \to \uh[(A_{\alpha})]$.

\emph{Step 6}. Type of the operators. In order to show that $\rho(L_k)\in \uh[(A)]_k'$ implies that  $\rho_{\alpha}(L_k)\in \uh[(A_{\alpha})]_k'$, it suffices to expand the Lie bracket $[ \rho(L_0), \rho(L_k)]$ using  $ \rho(L_k)= \rho_1(L_k) \otimes 1 + 1\otimes \rho_2(L_k)$. 
\end{proof}

\begin{rem}
It is worth noticing that if a decomposition is compatible with $a $, it does not need to be compatible with $b_0^{1,1}$. Indeed, for $A=\C^2$, $\eta=a  = \tiny{\begin{pmatrix} 1 & 0 \\ 0 & 1 \end{pmatrix} }$, the general form of $b_0^{1,1}$ is given by
$\tiny{\begin{pmatrix} -1  & \lambda \\ \lambda  & -1 \end{pmatrix} }$.
\end{rem}

For later use, the following general result will be required.

\begin{thm}\label{thm:solutionrhoi}
Let $\rho_i:{\mathcal W}_> \to \End V_i$, $i=1,2$, be two representations of the Lie algebra ${\mathcal W}_>$. And let us consider the product representation:
	$$
	\rho=\rho_1\otimes 1 +1\otimes \rho_2\, : \, {\mathcal W}_> \longrightarrow \End (V_1\otimes V_2)$$

Let $\sum_{i=1}^r f_{1,i}\otimes f_{2,i} \in V_1 \otimes V_2$.  Assume that $f_{i,1},\ldots,f_{i,r}$ are linearly independent (for $i=1,2$).

It then holds that:
	$$
	\rho(L_k)(\sum_{i}f_{1,i}\otimes f_{2,i})\,=\, 0 \quad \forall k\geq -1
	$$
if and only if:
	$$
	\rho_{i}(L_k)(f_{i,j})\,=0\, \text{ for all $i,j$ and $k\geq -1$ } 
	$$
\end{thm}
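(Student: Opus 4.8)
The plan is to reduce the problem to an elementary linear algebra statement by exploiting the tensor structure of $\rho$ together with the linear independence hypothesis. First I would fix $k\geq -1$ and expand
\[
\rho(L_k)\Big(\sum_{i=1}^r f_{1,i}\otimes f_{2,i}\Big)
\,=\,\sum_{i=1}^r \rho_1(L_k)(f_{1,i})\otimes f_{2,i}
\,+\,\sum_{i=1}^r f_{1,i}\otimes \rho_2(L_k)(f_{2,i}).
\]
The ``if'' direction is immediate: if every $\rho_i(L_k)(f_{i,j})$ vanishes, both sums above are zero, hence so is $\rho(L_k)$ applied to the element. The content is the ``only if'' direction, and here the key observation is that linear independence lets us separate the two sums. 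Since $f_{2,1},\dots,f_{2,r}$ are linearly independent, I would complete them to a basis of $V_2$ and read off coefficients; but the cleaner route is to view $V_1\otimes V_2$ as a space of ``matrices'' and use independence on both factors simultaneously.

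The main step is the following separation argument. Because $f_{2,1},\dots,f_{2,r}$ are linearly independent, there exist linear functionals $\phi_1,\dots,\phi_r\in V_2^{\ast}$ with $\phi_s(f_{2,i})=\delta_{si}$. Applying $\Id_{V_1}\otimes\,\phi_s$ to the displayed expansion, the second sum contributes $\sum_i f_{1,i}\,\phi_s(\rho_2(L_k)(f_{2,i}))$, which is a fixed vector of $V_1$, while the first sum contributes $\rho_1(L_k)(f_{1,s})$ plus a linear combination of the $\rho_1(L_k)(f_{1,i})$. This does not yet isolate $\rho_1(L_k)(f_{1,s})$, so instead I would argue more robustly: the vanishing of $\rho(L_k)$ on the element forces, for \emph{each} fixed $k$, an identity of the form $\sum_i \rho_1(L_k)(f_{1,i})\otimes f_{2,i} = -\sum_i f_{1,i}\otimes \rho_2(L_k)(f_{2,i})$. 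Writing $\rho_2(L_k)(f_{2,i})=\sum_j M^{(k)}_{ij} f_{2,j}$ modulo a complement (using that the $f_{2,i}$ are independent), and comparing coefficients of the independent vectors $f_{2,j}$, I obtain $\rho_1(L_k)(f_{1,j}) = -\sum_i M^{(k)}_{ij} f_{1,i}$ together with a torsion term living outside the span of the $f_{1,i}$ which must separately vanish. Thus $\rho_1(L_k)$ preserves $W_1:=\langle f_{1,1},\dots,f_{1,r}\rangle$ and acts there by the matrix $-M^{(k)}$; by symmetry $\rho_2(L_k)$ preserves $W_2:=\langle f_{2,1},\dots,f_{2,r}\rangle$ and acts by $M^{(k)}$.

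The hard part, and the place where the hypothesis $k\geq -1$ (i.e.\ that we are dealing with all of ${\mathcal W}_>$, not a single element) becomes essential, is upgrading ``$\rho_1(L_k)$ acts on $W_1$ by $-M^{(k)}$ and $\rho_2(L_k)$ by $M^{(k)}$'' to ``both actions are zero.'' Here I would invoke that the assignments $L_k\mapsto -M^{(k)}$ and $L_k\mapsto M^{(k)}$ are \emph{representations} of ${\mathcal W}_>$ on the finite-dimensional spaces $W_1$ and $W_2$: the $M^{(k)}$ satisfy the Witt bracket relations because $\rho_1,\rho_2$ are Lie algebra homomorphisms and $W_1,W_2$ are invariant. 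But ${\mathcal W}_>$ admits no non-trivial finite-dimensional representation (this is exactly the structural fact about ${\mathcal W}_>$ emphasised in the introduction), so every $M^{(k)}$ is the zero matrix. Consequently $\rho_1(L_k)(f_{1,j})=0$ and $\rho_2(L_k)(f_{2,j})=0$ for all $j$ and all $k\geq -1$, which is the desired conclusion. I would present the invariance-plus-no-finite-dimensional-representations step carefully, since it is the crux; the coefficient-comparison and the ``if'' direction are routine.
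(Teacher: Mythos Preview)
Your proposal is correct and follows essentially the same strategy as the paper: expand $\rho(L_k)$ on the tensor, use the linear independence hypotheses to show that the finite-dimensional spans $W_1=\langle f_{1,j}\rangle$ and $W_2=\langle f_{2,j}\rangle$ are invariant under $\rho_1$ and $\rho_2$ respectively, and then invoke that ${\mathcal W}_>$ (being simple and infinite-dimensional) admits no non-trivial finite-dimensional representation. The only cosmetic difference is in the invariance step: the paper argues by contradiction with a single linear functional $\chi\in V_1^\ast$ vanishing on $W_1$, whereas you split off a complement of $W_2$ and compare coefficients to extract the matrix $M^{(k)}$ directly; both arguments are equivalent, though your ``torsion term'' phrasing is slightly imprecise (the coefficient comparison already forces $\rho_1(L_k)(f_{1,j})\in W_1$ on the nose, and the vanishing of the complement part is what gives invariance of $W_2$).
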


\begin{proof}
Let us prove the converse. Bearing in mind that $\rho =\rho_1\otimes 1 + 1\otimes \rho_2$, one can do the following computation:
	{\small $$
	\begin{aligned}
	\rho(L_k)(\sum_i f_{1,i} & \otimes f_{2,i} )\, 
	=\, 
	(\rho_1\otimes 1 + 1 \otimes \rho_2) (L_k)(\sum_i f_{1,i}\otimes f_{2,i} )
	\, \\ & =\, 
	 \sum_i  \rho_1(L_k)(f_{1,i}) \otimes f_{2,i} \, +\, 
	 \sum_i f_{1,i}\otimes \rho_2 (L_k)(f_{2,i}  )
	\,=\, 0
	\end{aligned}
	$$}
and the conclusion follows. 

The direct implication is more subtle. The hypothesis and the decomposition of $\rho$ yield:
	{\small $$
	\begin{aligned}
	0\,=\, \rho(L_k)(\sum_i f_{1,i} & \otimes f_{2,i} )\, 
	=\, 
	(\rho_1\otimes 1 + 1 \otimes \rho_2) (L_k)(\sum_i f_{1,i}\otimes f_{2,i} )
	\, \\ & =\, 
	 \sum_i  \rho_1(L_k)(f_{1,i}) \otimes f_{2,i} \, +\, 
	 \sum_i f_{1,i}\otimes \rho_2 (L_k)(f_{2,i}  )
	\end{aligned}
	$$}
Let $E$ be  the vector space generated by $\{f_{1,1},\ldots, f_{1,r}\}\subset V_1$. Suppose that there exists $l$ such that $\rho_1(f_{1,l})$ does not belong to $E$. Then, let $\chi: V_1 \to \C$ be a linear form such that $\chi(f_{1,i})=0$ for all $i$ and $\chi( \rho_1(f_{1,l}))\neq 0$. Applying $\chi$ to the above equation, one obtains:
	$$
	0\,=\, \sum_i \chi\big( \rho_1(L_k)(f_{1,i}) \big) f_{2,i} \,\in\, V_2
	$$
which contradicts the fact that $f_{2,1},\ldots,f_{2,r}$ are linearly independent. Therefore, it follows that $\rho_1(f_{1,l})$ belongs to $E$ for all $l$ or, equivalently, 
	$$
	\begin{aligned}
	\rho_{1,E}:{\mathcal W}_> \,&\longrightarrow \,\End(E)
	\\
	L_k \quad &\longmapsto \, \rho_{1,E}(L_k):= \rho_1(L_k)\vert_E
	\end{aligned}
	$$
is a Lie algebra homomorphism. Recall that, being ${\mathcal W}_>$ simple, the non-trivial representations of ${\mathcal W}_>$ are faithful. Since $E$ is finite dimensional, $\rho_{1,E}$ must be trivial; that is, $\rho_{1,E}(L_k)=0$ for all $k$. In particular, 
	$$
	0\,=\, \rho_{1,E}(L_k)(f_{1,j}) \,=\,  \rho_1(L_k)(f_{1,j}) \qquad \forall j
	$$
The identities $\rho_2(L_k)(f_{2,j})=0$ are proven similarly.  % The conclusion follows. 
\end{proof}

%%%%%%%%%%%%%%%%%%%%%%%%
\subsection{Commutation Relations}\label{subsect:CR}

This subsection only collects the explicit computations of some Lie brackets used  previously and, thus, the reader can skip it. From a formal point of view,  we are dealing with generators of $\uh$, $ 1, q_{i,\alpha}, p_{i,\alpha}$,  with $i=1,2,\ldots$ and $\alpha=1,\ldots, n$, that satisfy the following relations:
    $$
    \begin{gathered}
    \, [p_{i,\alpha}, q_{j,\beta}] \,=\, \delta_{i,j} i \eta^{\alpha\beta}\cdot 1 
%    \qquad
%        \, [q_{j,\beta}, p_{i,\alpha}] \,=\,- \delta_{i,j}i \eta^{\alpha\beta}
    \\
   \,  [p_{i,\alpha}, p_{j,\beta}] \,=\,[q_{i,\alpha}, q_{j,\beta}] \,=\, 
    [p_{i,\alpha}, 1] \,=\,[q_{i,\alpha}, 1] \,=\,  0
    \end{gathered}
    $$
and, because of the associativity of composition, we will also use:
	$$
	[a,bc]=\, [a,b] c \,+\, b[a,c]
	$$

We will use the Einstein convention; that is, repeated subindices of the variables $p,q$'s imply the summation is to be done. Recall that $b_i^{0,2i+3}$  and $q_i:=(q_{i,1},\ldots, q_{i,n})$ denote row vectors, $p_i:=(p_{i,1},\ldots, p_{i,n})^T$ are column vectors (the superscript $T$ denotes the transpose), and $a , b_i^{j,2i+j}, c_i^{j,2i-j}$ are $n\times n$ square matrices.

Let us compute some Lie brackets. For instance,
    $$
    \begin{aligned}
    \, & [ b_i^{0,2i+3} p_{2i+3} , b_{j}^{0,2j+3} p_{2j+3}]
    \,=\\ & \qquad  =\,
    [ (b_i^{0,2i+3})_{\alpha} (p_{2i+3})_{\alpha}	\, ,\, (b_{j}^{0,2j+3})_{\beta} (p_{2j+3})_{\beta} ]
    \,=\\ & \qquad  =\,
    (b_i^{0,2i+3})_{\alpha}  [ (p_{2i+3})_{\alpha}	\, ,\,(p_{2j+3})_{\beta} ]  (b_{j}^{0,2j+3})_{\beta} \,=\, 0
    \end{aligned}
    $$
where subindices $\alpha,\beta$ denote the corresponding entries of the vectors. Analogously, we have the following identities:
    \begin{subequations}
    \begin{align*}
	& [ q_1 a  q_1^T ,  b_i^{0,2i+3} p_{2i+3} ] \,=\, 0 \qquad \forall i\geq 0
	\\ &
    [ q_{r} b_{i}^{r,r+2i} p_{r+2i} ,  b_j^{0,2j+3} p_{2j+3}]
    \,=\\ & \qquad  =\,
    - [b_j^{0,2j+3} p_{2j+3} ,  (q_{r})_{\alpha}] (b_{i}^{r,r+2i} p_{r+2i})_{\alpha}
    \\ & \qquad \quad 
    -\, (q_{r} b_{i}^{r,r+2i})_{\alpha} [  b_j^{0,2j+3} p_{2j+3}, (p_{r+2i})_{\alpha}]
    \,=\\ & \qquad  =\,
    - (b_j^{0,2j+3})_{\beta} [ (p_{2j+3})_{\beta}, (q_{r})_{\alpha}]
    (b_{i}^{r,r+2i})_{\alpha\gamma} (p_{r+2i} )_{\gamma}
    \,=\\ & \qquad  =\,
    - (2j+3) (b_j^{0,2j+3})_{\beta} \eta^{\beta\alpha} (b_{i}^{2j+3,2j+3+2i})_{\alpha\gamma} (p_{2j+3+2i} )_{\gamma}
    \,=\\ & \qquad  =\,
    - (2j+3) b_j^{0,2j+3} \eta^{-1} b_{i}^{2j+3,2j+3+2i} p_{2j+3+2i}
     \\ & 
     [ p_{r} c_{i}^{r,2i-r} p_{2i-r} ,  b_j^{0,2j+3} p_{2j+3}]
    \,=\, 0 
    \\ & 
    [ q_1 a  q_1^T , q_{r} b_{j}^{r,r+2j} p_{r+2j} ]
    \,=\, 0 \qquad\forall j\geq 1
       \\ &
    [ q_1 a  q_1^T , q_{r} b_{0}^{r,r} p_{r} ]
    \,=\\ & \qquad  =\,
   (  q_{1} b_{0}^{1,1} )_{\alpha} [ q_1 a  q_1^T , ( p_{1})_{\alpha} ]
    \,=\\ & \qquad  =\,
	- (  q_{1} b_{0}^{1,1} )_{\alpha} \big(
	[( p_{1})_{\alpha} , ( q_{1})_{\beta}] (a  q_1^T)_{\beta} +
	(q_1 a )_{\beta} [( p_{1})_{\alpha} , ( q_{1})_{\beta}]\big)
    \,=\\ & \qquad  =\,
    - q_{1} b_{0}^{1,1}\eta^{-1}  \big( a  +a^T \big) q_1^T
\\ &
    [ q_1 a  q_1^T , p_{r}^T c_{j}^{r,2j-r} p_{2j-r} ]
    \,=\\ & \qquad  =\,
    [ q_1 a  q_1^T , (p_{r})_{\alpha}] (c_{j}^{r,2j-r} p_{2j-r})_{\alpha}
    \,+\,
      (p_{r}^T c_{j}^{r,2j-r})_{\alpha}  [ q_1 a  q_1^T ,  (p_{2j-r} )_{\alpha}]
    \,=\\ & \qquad  =\,
    -\big( [(p_1)_{\alpha}, (q_1)_{\beta}] (a  q_1^T)_{\beta} + (q_1 a )_{\beta}[(p_1)_{\alpha}, (q_1)_{\beta}]\big)
    (c_{j}^{1,2j-1} p_{2j-1})_{\alpha}
    \,- \\ & \qquad \qquad - \, (p_{2j-1}^T c_{j}^{2j-1,1})_{\alpha} \big([(p_1)_{\alpha},(q_1)_{\beta}] (a  q_1^T)_{\beta} + ( q_1 a )_{\beta} [(p_1)_{\alpha},(q_1)_{\beta}]\big)
    \,=\\ & \qquad  =\,
    -q_1 \big(  a    + ( a )^T\big)(\eta^{-1})^T  c_{j}^{1,2j-1} p_{2j-1}     
    \,- \, p_{2j-1}^T c_{j}^{2j-1,1} \eta^{-1} \big(   a   + (a   )^T\big) q_1^T
    \,=\\ & \qquad  =\,
    -q_1 \big(  a    + ( a )^T\big)(\eta^{-1})^T   
    \big(  c_{j}^{1,2j-1}  +    (c_{j}^{2j-1,1})^T \big) p_{2j-1} -
    \\  
    & \qquad 
    -  \delta_{j1}  \operatorname{Tr}\big(c_{1}^{1,1} \eta^{-1} ( a   + a^T)(\eta^{-1})^T\big)
\\ &
    [ q_{r} b_{i}^{r,r+2i} p_{r+2i}   , p_{s}^T c_{j}^{s,2j-s} p_{2j-s} ]
    \,=\\ & \qquad  =\,
    [  q_{r} b_{i}^{r,r+2i} p_{r+2i} , (p_{s})_{\alpha} ] (c_{j}^{s,2j-s} p_{2j-s})_{\alpha}
    \,+
    \\ & \qquad\quad + \,
      (p_{s}^T c_{j}^{s,2j-s} )_{\alpha} [ q_{r} b_{i}^{r,r+2i} p_{r+2i} ,  (p_{2j-s})_{\alpha} ]
    \,=\\ & \qquad  =\,
    - [ (p_{s})_{\alpha} ,  (q_{r})_{\beta}  ] (b_{i}^{r,r+2i} p_{r+2i} )_{\beta} (c_{j}^{s,2j-s} p_{2j-s})_{\alpha}
    \,- \\ & \qquad \qquad -\,
      (p_{s}^T c_{j}^{s,2j-s} )_{\alpha}  [ (p_{2j-s})_{\alpha},  (q_{r})_{\beta}   ](b_{i}^{r,r+2i} p_{r+2i})_{\beta}
    \,=\\ & \qquad  =\,
    - r p_{2j-r}^T (c_{j}^{r,2j-r})^T \eta^{-1} b_{i}^{r,r+2i} p_{r+2i}
    \,-\, r  p_{2j-r}^T c_{j}^{2j-r,r} \eta^{-1} b_{i}^{r,r+2i} p_{r+2i}
    \,=\\ & \qquad  =\,
    - r p_{2j-r}^T \big( (c_{j}^{r,2j-r})^T + c_{j}^{2j-r,r}\big)\eta^{-1} b_{i}^{r,r+2i} p_{r+2i}
 %which, for $i=0$,  is equal to:
%%%%%%%%%%%%%%%%%
%    $$
%    - p_{s}^T \big( s (b_{0}^{s,s})^T \eta^{-1} c_{j}^{s,2j-s}
%    + (2j-s) c_{j}^{s,2j-s} \eta^{-1} b_{0}^{2j-s,2j-s} \big) p_{2j-s}
%     $$
   %%%%%%%%%%%%%%%%
      \\ &
    [ q_{r} b_{i}^{r,r+2i} p_{r+2i}   , q_{s} b_{j}^{s,s+2j} p_{s+2j} ]
    \,=\\ & \qquad  =\,
    [  q_{r} b_{i}^{r,r+2i} p_{r+2i} , (q_{s})_{\alpha} ] (b_{j}^{s,s+2j} p_{s+2j})_{\alpha}
    \,+\
     \\ & \qquad \qquad +\,
      (q_{s} b_{j}^{s,s+2j} )_{\alpha} [ q_{r} b_{i}^{r,r+2i} p_{r+2i} ,  (p_{s+2j})_{\alpha} ]
    \,=\\ & \qquad  =\,
    - (q_{r} b_{i}^{r,r+2i})_{\beta}[    (q_{s})_{\alpha}, (p_{r+2i})_{\beta} ] (b_{j}^{s,s+2j} p_{s+2j})_{\alpha}
    \,- \\ & \qquad \qquad -\,
      (q_{s} b_{j}^{s,s+2j} )_{\alpha} [ (p_{s+2j})_{\alpha}, (q_{r} )_{\beta} ](b_{i}^{r,r+2i} p_{r+2i})_{\beta}
    \,=\\ & \qquad  =\,
    (r+2i) q_{r} b_{i}^{r,r+2i}  \eta^{-1}   b_{j}^{r+2i,r+2i+2j} p_{r+2i+2j}
    \,-\,
      r q_{r-2j} b_{j}^{r-2j,r}\eta^{-1} b_{i}^{r,r+2i} p_{r+2i}
   \end{align*}
   \end{subequations}

%%%%%%%%%%%%%%%%%%%%%%%%
\section{An Application}

As an application of the previous sections, we offer here an example that illustrates how our results can be used for studying the representation of ${\mathcal W}_>$ appearing in the study of the Virasoro conjecture. Regarding the Virasoro conjecture our main references are the works of  Dubrovin-Zhang, Eguchi-Hori-Xiong, Getzler, Givental and Liu-Tian (\cite{DubrovinZ, EHX, Getzler, Givental, LiuTian}). 

In our example will consider $(A, (,))$ to be the cohomology ring of a smooth projective variety $X$ with $c_1(X)=0$ and trivial odd cohomology groups. Recall that the hypothesis on the first Chern class is equivalent to the vanishing of the operator $R$ in \cite{DubrovinZ, Getzler}; however, it does not seem difficult to extend the results of \S\ref{sec:ReprW>} to include this case. On the other hand, the hypothesis on the odd cohomology groups is fulfilled if $X$ has generically semisimple even quantum cohomology (\cite{HMT}). It seems to be very hard to weaken this assumption.

\subsection{Preliminaries}

Let $A$ be a $n$ dimensional vector space over $\C$ endowed with a bilinear form $(\, , \, )$. Let $\{a_1,\ldots,a_n\}$ be a basis and $\eta$ be the  matrix associated to the pairing,  $\eta_{\alpha\beta}:=(a_{\alpha},a_{\beta})$. Let us  consider  the subspace $\C[[t_1,t_3,t_5,\ldots]]$ of the the boson Fock space $\C[[t_1,t_2,\ldots]]$ and the subalgebra of $ \C[[t_1,\ldots]] \hat\otimes_{\C} S^{\bullet}A$ generated by $t_{i,\alpha}:= t_i\otimes a_{\alpha} $ with $i$ odd:
	{\small
	\begin{equation}
	\label{eq:Vkdv}
	\Vkdv(A)\,:=\C[[\{t_{i,\alpha} \vert 1\leq \alpha\leq n, i \text{ odd}\} ]]
	\,\subseteq \,  \C[[t_1,\ldots]] \hat\otimes_{\C} S^{\bullet}A
	\end{equation}}
If no confusion arises, we will simply write $\Vkdv$. 

Now we study a distinguished representation of ${\mathcal W}_>$ in $\Vkdv$; eventually, we will see that it is the representation coming from the action of the Heisenberg algebra via Givental's quantization (\cite{Givental}). More precisely, we will combine the chain of inclusions of Lie algebras:
	$$
	\sl(2)\,\hookrightarrow\, {\mathcal W}_> \, \hookrightarrow \, \uh
	$$
which has been studied in the previous section, with a map:
	$$
	\begin{aligned}
	\widehat{\,}\,: \uh[(A)] &\, \longrightarrow \End_{\C}(\Vkdv(A))
	\\
	P &\, \longmapsto \, \hat P
	\end{aligned}
	$$
whose obstruction to be compatible with the Lie brackets is governed by a cocycle. This map is defined following  the results of Dubrovin-Zhang, Givental and Kazarian (\cite{DubrovinZ, Givental, Kaz}); namely, we set:
	\begin{equation}\label{eq:quant}
\hat 1 \,=\, 1 \, , \qquad 
\hat p_{i,\alpha} \,=\, \eta^{\alpha\beta}\frac{\partial}{\partial t_{i,\beta}}   \, ,\qquad
\hat q_{i,\alpha} \,=\,  i t_{i,\alpha}
	\end{equation}
(recall that $i$ is a positive odd integer number).

\begin{rem}
Givental has developed a beautiful formalism for this construction in terms of quantization of quadratic hamiltonians (\cite{Givental}). An alternative approach, originated in the Japanese school and strongly linked to the Sato grassmannian, can be found in \cite{KSU}. The forthcoming section (\S\ref{subsec:solutions1dim}) is deeply inspired by the latter. 
\end{rem}

\subsection{The representation}\label{subsec:representation}

Bearing in mind the results of \S\ref{subsect:sl2}, we know that the operator:
	$$
	    F\,  :=\, b_{-1}^{0,1} p_1 + q_1\eta q_1^T + q_{i+2} \eta p_i 
	$$
together with the data:
	\begin{itemize}
	\item  $b_{-1}^{0,1}$ arbitrary,
	\item $b_0^{1,1}$ such that \eqref{eq:FyieldsH:2} holds, and
	\item  $c_1^{1,1} := \frac1{16} \eta^T b_0^{1,1} \eta^{-1} (b_0^{1,1}\eta^{-1}+2) $,
	\end{itemize}
determine a map  $\sigma:\sl(2)\to \uh$. Indeed, equations~\eqref{eq:FyieldsH}, \eqref{eq:EFH} and \eqref{eq:HF-2F} allow us to obtain the explicit expressions for  $H$ and $F$:
	\begin{equation}\label{eq:HEparticular}
	\begin{aligned}
	H\, & = \, 
	\frac13 b_{-1}^{0,1}(\eta^{-1} b_0^{1,1}-2) p_3 +
	 \frac1i q_i (b_0^{1,1}-(i-1)\eta) p_i +
	 \\ & \qquad + 
	 \frac1{16} \Tr(b_0^{1,1}\eta^{-1} (b_0^{1,1}\eta^{-1}+2) (1+\eta^{-1}\eta^T))
	\\
	E \, &=\, \frac1{5!!} b_{-1}^{0,1} 
	\big(2\eta^{-1}b_0^{1,1} - 2- (\eta^{-1}+(\eta^{-1})^T )(c_1^{1,1}+(c_1^{1,1})^T)\big) p_5 
	+ \\
	 & \qquad +
	 \frac1{16} p_1^T \eta^T b_0^{1,1} \eta^{-1} (b_0^{1,1}\eta^{-1}+2)  p_1 - \\
	 & \qquad - \frac1{4 i (i+2) } q_{i} (  b_0^{1,1} \eta^{-1}- (i-1) ) (  b_0^{1,1} \eta^{-1}-(i+1) ) \eta  p_{i+2}
	\end{aligned}
	\end{equation}

Now, by  Theorem \ref{thm:W^+-sl2}, the map $\sigma$ extends uniquely to an homomorphism $\rho:{\mathcal W}_>\to \uh$. And one can now compute the induced action on $\Vkdv$. Let us write down the first operators:
    \begin{subequations}
    \begin{align*}
    \hat{L}_{-1}\, & :=\, (\rho(L_{-1}))^{\hat\,} \, =\, \hat{F}\, =\, b_{-1}^{0,1} \eta^{-1}\frac{\partial}{\partial t_{1}}
    	 + t_{1} \eta t_{1}^T + (i+2) t_{i+2} \frac{\partial}{\partial t_{i}}
    \\
     \hat{L}_{0}\, & :=\, (\rho(L_{0}))^{\hat\,} \, =\, -\frac12 \hat{H}\, =\, 
     - \frac16 b_{-1}^{0,1}(\eta^{-1} b_0^{1,1}-2) \eta^{-1} \frac{\partial}{\partial t_{3}}  -
      \\ & \qquad 	-\frac12 t_i (b_0^{1,1}\eta^{-1} -(i-1)) \frac{\partial}{\partial t_{i}}
      -\frac1{32} \Tr(b_0^{1,1}\eta^{-1} (b_0^{1,1}\eta^{-1}+2) (1+\eta^{-1}\eta^T))
     \\
     \hat{L}_{1}\, & :=\, (\rho(L_{1}))^{\hat\,} \, =\, - \hat{E}\, = \\ & \qquad
     - \frac1{5!!} b_{-1}^{0,1} 
	\big(2\eta^{-1}b_0^{1,1} - 2- (\eta^{-1}+(\eta^{-1})^T )(c_1^{1,1}+(c_1^{1,1})^T)\big) \eta^{-1}\frac{\partial}{\partial t_{5}} - \\ & \qquad
     -  \frac1{16} (\frac{\partial}{\partial t_{1}})^T \eta^T  b_0^{1,1} \eta^{-1} (b_0^{1,1}\eta^{-1}+2)   \frac{\partial}{\partial t_{1}} + \\
	 & \qquad + \frac1{4 (i+2)} t_{i} (  b_0^{1,1} \eta^{-1}- (i-1) ) (  b_0^{1,1} \eta^{-1}-(i+1) )  \frac{\partial}{\partial t_{i+2}}
     \end{align*}
     \end{subequations}
where, as usual, we write $t_i$ for the row vector $(t_{i,1},\ldots, t_{i,n})$ and $\frac{\partial}{\partial t_{i}}$ for the column vector $(\frac{\partial}{\partial t_{i,1}},\ldots, \frac{\partial}{\partial t_{i,n}})^T$.

%%%%%%%%
\subsection{The operators of the Virasoro Conjecture: a baby model}\label{subsec:baby}

Now, we are ready to show how the operators appearing in the Virasoro conjecture agree with our approach for the case of manifold with trivial odd cohomology and whose first Chern class vanishes. 

From now on, we suppose we are given $X$, whose first Chern class is zero, and with trivial odd cohomology. Under this hypothesis, the Poincar\'e pairing defines on $A:= H^{\bullet}(X,{\mathbb C})$  a symmetric non-degenerated bilinear form:
	$$
	( a, b ) \,=\, \int_X a\cup b \quad \text{ for }a,b\in A ,
	$$
Let $r:=\operatorname{dim}(X)$ and fix a basis $\{a_{\alpha}\vert \alpha=1,\ldots, n \}$ of $A$,  with $a_1=1\in H^0(X,{\mathbb C})$, such that it is homogeneous w.r.t. the Hodge decomposition; that is, $a_{\alpha}\in H^{p_{\alpha},q_{\alpha}}(X)$ for certain $p_{\alpha}, q_{\alpha}$.  Let $\bar \eta$ the matrix associated to the Poincar\'e pairing w.r.t. the chosen basis and let us define  $\mu_{\alpha}:= p_{\alpha}-\frac{r}2$ and $\mu$ the matrix with $\mu_1,\ldots, \mu_r$ along its diagonal and $0$ elsewhere. Observe that the compatibility of the Poincar\'e pairing w.r.t. the Hodge decomposition yields:
	\begin{equation}\label{eq:mua+mb=0}
	\bar\eta_{\alpha\beta}\neq 0 \qquad \implies \qquad \mu_{\alpha} + \mu_{\beta}=0
	\end{equation}
%
%\todo[inline]{ It is known that these operators coincide with those of Dubrovin-Zhang, EHX see LeePandharipande's book: Frobenius manifolds GW-theory, p section 4.3}

The operators appearing in the Virasoro conjecture when the first Chern class vanishes (\cite[Equation (1.2)]{Getzler}) are as follows:
	{\small
	{\begin{equation}\label{eq:Getzler-101}
	\begin{aligned}
	\bar L_{-1}\,& :=\,
	- \frac{\partial}{\partial \bar t_{0,1}}  + \frac{1}{2\hbar} \bar t_{0} \bar \eta \bar t_0^T + \bar t_{i+1} \frac{\partial}{\partial \bar t_{i}} 
	\\
	\bar L_0 \,& :=\, -\frac{3-r}2\frac{\partial}{\partial  \bar t_{1,1}}  + (\mu_{\alpha}+i+ \frac{1}2) \bar t_{i,\alpha}  \frac{\partial}{\partial \bar t_{i,\alpha}}  +  
	\frac{1}{48}(3-r)\int_X c_r(X)
	\end{aligned}
	\end{equation}}}
and, for $k\geq 1$, as:
	{\small
	\begin{equation}\label{eq:GetzlerLk}
	\begin{aligned}
	\bar L_k \,& :=\, -\frac{\Gamma(k+\frac{5-r}2)}{\Gamma(\frac{3-r}2)} 
	\frac{\partial}{\partial \bar t_{k+1,1}} +
		 \frac{\Gamma(\mu_{\alpha}+i+k+\frac{3}2)}{\Gamma(\mu_{\alpha}+i+\frac{1}2)} \bar t_{i,\alpha }   \frac{\partial}{\partial \bar t_{k+i,\alpha}} 
		\\ & \qquad
		+ \frac{\hbar}2 (-1)^{i} \frac{\Gamma(\mu_{\alpha}+i+k+\frac{3}2)}{\Gamma(\mu_{\alpha}+i+\frac{1}2)} \bar \eta^{\alpha\beta} \frac{\partial}{\partial \bar t_{-1-i,\alpha}}  \frac{\partial}{\partial \bar t_{k+i,\beta}} 
	\end{aligned}
	\end{equation}}
where $c_r(X)$ is the $r$-th Chern class and we have used variables $\bar t_{i,\alpha}$ with $\alpha=1,\ldots, n$ and $i=0,1,2,\ldots$.

Similarly to the case of $\uh$, we say that a second order differential operator in $\{\bar t_{i,\alpha}\}$ is of type $i$ if it is a linear combination of $\frac{\partial}{\partial \bar t_{i+1,\alpha}}$ and the following terms 
$\frac{\partial}{\partial \bar t_{j-1,\alpha}}\frac{\partial}{\partial \bar t_{i-j,\beta}}$, $\bar t_{j,\alpha} \frac{\partial}{\partial \bar t_{j+i,\beta}}$ and $\bar t_{j-1,\alpha}\bar t_{-i-j,\beta}$ and, if $i=0$, a constant term. Observe that $\bar L_k$ is of type $k$. Now, we offer a simple proof of a folk statement.

\begin{prop}
The operators $\{\bar L_k\vert k\geq 2\}$ are uniquely determined by $\{\bar L_{-1},\bar L_0, \bar L_1\}$ and the condition that $\bar L_k$ is of type $k$ for all $k\geq -1$. 
\end{prop}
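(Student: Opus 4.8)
The plan is to exhibit a precise dictionary between the Virasoro operators $\bar L_k$ of \eqref{eq:Getzler-101}--\eqref{eq:GetzlerLk} and the operators $\rho(L_k)$ constructed via $\uh$ in \S\ref{subsec:representation}, and then invoke the uniqueness already proved for the latter. Concretely, I would first observe that the operators $\bar L_k$ satisfy the bracket relations $[\bar L_i,\bar L_j]=(i-j)\bar L_{i+j}$ for $i,j\geq -1$; this is precisely the content of the Virasoro conjecture as formulated in \cite{Getzler}, so it may be taken as given. Hence the assignment $L_k\mapsto \bar L_k$ defines a Lie algebra homomorphism $\bar\rho\colon {\mathcal W}_>\to \operatorname{Diff}^2$ into second-order differential operators in the $\{\bar t_{i,\alpha}\}$, with each $\bar L_k$ of type $k$ in the sense defined just before the Proposition.

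Next I would transport the structural result of \S\ref{sec:ReprW>} into this differential-operator setting. The key point is that the ``type $i$'' conditions on differential operators match, under the quantization rules \eqref{eq:quant} (reading $\partial/\partial \bar t_{i,\alpha}$ as $\hat p$ and $\bar t_{i,\alpha}$ as $\hat q$), exactly the membership $\uh_i$ of Definition~\ref{defn:type}, after the index shift that identifies $\bar t_{i,\alpha}$ with $t_{2i+1,\alpha}$. Under this identification the embedding $\sl(2)\hookrightarrow{\mathcal W}_>$ of \eqref{eq:emb-sl2-W+} sends $\{f,h,e\}$ to $\{\bar L_{-1},-2\bar L_0,-\bar L_1\}$, and $\bar\rho|_{\sl(2)}$ becomes a map $\sl(2)\to\uh$ with $\bar\rho(f)=\bar L_{-1}$ of type $-1$, $\bar\rho(h)\in\uh_0$ and $\bar\rho(e)\in\uh_1$. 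The hypotheses needed to apply Theorem~\ref{thm:W^+-sl2} are that the matrix $a$ be symmetric (immediate, since $\bar\eta$ is the symmetric Poincar\'e pairing) and that the coefficients $b_{-1}^{i,i-2}$ coming from the term $\bar t_{i+1}\,\partial/\partial\bar t_i$ in $\bar L_{-1}$ be invertible (they are scalar multiples of the identity, hence invertible).

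With these verifications in place, Theorem~\ref{thm:W^+-sl2} applies directly: the map $\iota^*$ is a bijection between Lie algebra homomorphisms $\rho\in\homlie({\mathcal W}_>,\uh)$ with $\rho(L_{-1})=\bar L_{-1}$ and $\rho(L_i)\in\uh_i$ for $i\geq 0$, and $\sl(2)$-homomorphisms $\sigma$ with $\sigma(f)=\bar L_{-1}$, $\sigma(h)\in\uh_0$, $\sigma(e)\in\uh_1$. Since $\bar\rho$ is one such $\rho$ and is completely determined by its restriction $\sigma=\iota^*(\bar\rho)$, which in turn is determined by the triple $\{\bar L_{-1},\bar L_0,\bar L_1\}$, the operators $\{\bar L_k\mid k\geq 2\}$ are forced. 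Explicitly, the bijection produces $\bar L_2$ via \eqref{eq:rho(L_2)def} and then each $\bar L_k$ for $k>2$ via the recursion \eqref{eq:rho(L_i)def}, so uniqueness is immediate once the $\sl(2)$-data and the type conditions are fixed.

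I expect the main obstacle to be the bookkeeping in the second paragraph: one must check carefully that Getzler's Gamma-function coefficients in \eqref{eq:GetzlerLk}, after the substitution $\bar t_{i,\alpha}\leftrightarrow t_{2i+1,\alpha}$ and the rescaling implicit in the $\hbar$ normalization, land genuinely inside $\uh_k$ rather than in some larger space of operators, and that the $\sl(2)$-relations hold on the nose with the stated sign conventions $\bar\rho(h)=-2\bar L_0$, $\bar\rho(e)=-\bar L_1$. Once this matching is confirmed, the uniqueness is a formal consequence of Theorem~\ref{thm:W^+-sl2} and requires no further computation.
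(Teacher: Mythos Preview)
Your proposal is correct and follows essentially the same approach as the paper: identify the type-$k$ differential operators in the $\bar t_{i,\alpha}$ with elements of $\uh_k$ via the change of variables $\bar t_i=\sqrt{2\hbar}\,(2i+1)!!\,t_{2i+1}$, verify that $\bar L_{-1}$ satisfies the hypotheses of Theorem~\ref{thm:W^+-sl2} (symmetric $a=\bar\eta$, invertible $b_{-1}^{i+2,i}$), and then invoke that bijection to conclude uniqueness from the $\sl(2)$-data. The paper's proof is simply the terse version of what you wrote; your anticipated ``bookkeeping obstacle'' is exactly what the paper dismisses with ``it is clear'' and ``it is easy to check''.
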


\begin{proof}
Under the change of variables $\bar t_i:= \sqrt{2\hbar} (2i+1)!! t_{2i+1}$, it is clear that a second order differential operator in $\bar t_i$'s  is of type $k$ if and only if is equal to $\hat T$ for $T\in \uh_{k}$. Now, it is easy to check that the hypothesis of Theorem \ref{thm:W^+-sl2} hold; namely, $\hat F=\bar L_{-1}$ and $\bar L_k$ are of type $k$ for $k=0,1$. The conclusion follows. 
\end{proof}

\begin{thm}\label{thm:Lbar=Lhat}
It holds:
	$$
	\bar L_i \,=\, \hat L_i \qquad i=-1,0,1,\ldots
	$$
for the choice $\bar t_i:= \sqrt{2\hbar} (2i+1)!! t_{2i+1}$, $\eta=\bar \eta$, $b_{-1}^{0,1}= (0,\ldots, 0, \frac{-1}{\sqrt{2\hbar}})$ and:
	$$
	b_0^{1,1}\,:=\, -(2 \mu+1) \eta \,=\, -{\small \begin{pmatrix} 0 & & 2\mu_1+1 \\ & \iddots & \\ 2\mu_n+1 & & 0\end{pmatrix}}$$
\end{thm}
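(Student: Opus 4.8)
The plan is to reduce the statement to the three cases $i=-1,0,1$ and then to match those directly. Both $\{\bar L_k\}_{k\geq-1}$ and $\{\hat L_k=\widehat{\rho(L_k)}\}_{k\geq-1}$ are families of type-$k$ operators arising from homomorphisms ${\mathcal W}_>\to\uh$ through the quantization \eqref{eq:quant}: the $\hat L_k$ come from the extension $\rho$ furnished by Theorem~\ref{thm:W^+-sl2}, while the preceding Proposition shows that the $\bar L_k$ arise from such an extension as well. Because \eqref{eq:quant} carries distinct monomials to distinct monomials, the map $\widehat{\,}$ is injective on each $\uh_k$; hence the bijection of Theorem~\ref{thm:W^+-sl2} reduces the whole statement to the identities $\bar L_i=\hat L_i$ for $i=-1,0,1$, since agreement of the two $\sl(2)$-data forces agreement of the two extensions.

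It remains to verify the three base cases by substituting the prescribed data into \eqref{eq:HEparticular} and rewriting everything in the variables $\bar t_i=\sqrt{2\hbar}(2i+1)!!\,t_{2i+1}$. The organising observation is that, with $b_0^{1,1}=-(2\mu+1)\eta$, one gets $b_0^{1,1}\eta^{-1}=-(2\mu+1)$, while the Hodge compatibility \eqref{eq:mua+mb=0} is precisely the relation $\eta^{-1}\mu\eta=-\mu$, so that $\mu^2$ commutes with $\eta$ and $\eta^{-1}b_0^{1,1}=2\mu-1$. Feeding these into the scaling term $-\tfrac12 t_i(b_0^{1,1}\eta^{-1}-(i-1))\tfrac{\partial}{\partial t_i}$ of $\hat L_0$ converts its coefficient into $\mu+\tfrac i2$; writing $i=2m+1$ and using that the Euler field $t_{i,\alpha}\tfrac{\partial}{\partial t_{i,\alpha}}$ is scale invariant, this becomes $(\mu_\alpha+m+\tfrac12)\bar t_{m,\alpha}\tfrac{\partial}{\partial\bar t_{m,\alpha}}$, i.e. the scaling term of $\bar L_0$ in \eqref{eq:Getzler-101}. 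The single-derivative terms come out using $b_{-1}^{0,1}=(0,\dots,0,-\tfrac1{\sqrt{2\hbar}})$, the value $\mu_n=\tfrac r2$ (as $a_n$ spans $H^{2r}$), and the normalization $\eta_{1n}=\int_X a_n=1$, which together yield the coefficients $-1$ and $-\tfrac{3-r}2$ after the double-factorial rescaling. The analogous substitution in $\hat L_1$, with $c_1^{1,1}=\tfrac1{16}\eta^Tb_0^{1,1}\eta^{-1}(b_0^{1,1}\eta^{-1}+2)$, turns the product $(b_0^{1,1}\eta^{-1}-(i-1))(b_0^{1,1}\eta^{-1}-(i+1))$ into $(2\mu+i)(2\mu+i+2)$, which after rescaling equals $(\mu_\alpha+m+\tfrac12)(\mu_\alpha+m+\tfrac32)=\Gamma(\mu_\alpha+m+\tfrac52)/\Gamma(\mu_\alpha+m+\tfrac12)$, the $k=1$ coefficient of \eqref{eq:GetzlerLk}; the second-derivative term matches likewise once the index raising $\hat p_1=\eta^{-1}\tfrac{\partial}{\partial t_1}$ is accounted for, the remaining powers of $\hbar$ being absorbed into the change of variables.

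The main obstacle I anticipate is the scalar term of $\hat L_0$ against that of $\bar L_0$. Using $\eta^T=\eta$ (so $1+\eta^{-1}\eta^T=2$) and $b_0^{1,1}\eta^{-1}(b_0^{1,1}\eta^{-1}+2)=4\mu^2-1$, the constant of $\hat L_0$ equals $-\tfrac1{32}\Tr\big((4\mu^2-1)\cdot2\big)=-\tfrac14\Tr(\mu^2)+\tfrac n{16}$, whereas $\bar L_0$ contributes $\tfrac1{48}(3-r)\int_X c_r(X)$. Matching the two therefore demands the purely Hodge-theoretic identity $\Tr(\mu^2)=\sum_\alpha(p_\alpha-\tfrac r2)^2=\tfrac r{12}\int_X c_r(X)$, together with $\int_X c_r(X)=\chi(X)=\dim A=n$, which holds because the odd cohomology vanishes. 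This scalar identity is the only ingredient that is not a formal consequence of the representation-theoretic set-up; I would establish it either by invoking the standard characteristic-number relation or by a direct Hodge-number count, all the other matchings being bookkeeping governed by $\eta^{-1}\mu\eta=-\mu$ and the double-factorial normalization.
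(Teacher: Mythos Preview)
Your approach coincides with the paper's: reduce via Theorem~\ref{thm:W^+-sl2} to the three base cases $i=-1,0,1$ and verify those by direct substitution, the only nontrivial step being the constant term of $\hat L_0$. The paper carries out exactly the computation you outline and then identifies the required scalar identity as the Libgober--Wood identity \cite[Proposition~2.3]{LW}: with trivial odd cohomology and $c_1(X)=0$ it specialises to your $\Tr(\mu^2)=\tfrac{r}{12}\int_X c_r(X)$, and it is precisely at this point that the hypothesis $c_1(X)=0$ enters. Your suggested alternative of a ``direct Hodge-number count'' is not a genuine option here: the relation is not combinatorial in the $h^{p,q}$ alone but is a consequence of Hirzebruch--Riemann--Roch, so you should invoke Libgober--Wood (or an equivalent characteristic-number argument) explicitly.
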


\begin{proof}
Theorem \ref{thm:W^+-sl2} implies that it suffices to show that $\bar L_i \,=\, \hat L_i$ for $i=-1,0,1$. Indeed, this fact follows from the explicit substitution of $t_i$, $\eta$, etc. as in the statement in the operators $ \hat L_i$. The only identity which is not obvious is the one corresponding to the constant term of $ \hat L_0$. Bearing in mind the definitions and the fact that $\eta$ is symmetric, this term is:
	{\small $$
	\begin{aligned}
	 -\frac1{32} & \Tr (b_0^{1,1}\eta^{-1} (b_0^{1,1}\eta^{-1}+2) (1+\eta^{-1}\eta^T))
	 \,= \\ & = \,
	 -\frac1{16}
	 \sum_{\alpha=1}^n (2 p_{\alpha} -r+1) (2 p_{\alpha}-r -1)  
	  \,=\,
	 \frac14 \sum_{p,q} h^{p,q} (\frac{r+1}2 - p ) ( p-\frac{r-1}2)
	 \end{aligned}
	 $$}
where $h^{p,q}=\dim H^p(X,\Omega^q)$.

Now, observe that the Libgober-Wood identity (\cite[Proposition~2.3]{LW}) can be stated as:
	{\small $$
	\sum_{p,q} (-1)^{p+q}h^{p,q}  (\frac{r+1}2 - p ) ( p-\frac{r-1}2) 
	\,=\, \frac16\int_X \big(\frac{3-r}2 c_r(X)- c_1(X)c_{r-1}(X)\big)
	$$}
Recalling that we are assuming that that it has trivial odd cohomology, the constant term equals:
	$$
	 \frac1{48}\int_X \big( (3-r) c_r(X)- 2 c_1(X)c_{r-1}(X)\big)
	$$
which agrees with the free term of $\bar L_0$ (see \eqref{eq:Getzler-101}) since $c_1(X)=0$. 
\end{proof}

\begin{rem}
It is worth noticing that up to rescaling the variables and a Dilaton shift, these operators coincide with those of \cite[Equation (3.5)]{DVV} and \cite[\S3]{Givental} (for $b_1=0$) and with those of \cite[Equation (7.33)]{Dijkgraaf} and \cite[Equation (2.59)]{Witten} (for $b_1=-\frac13\sqrt{\frac{\eta}{2\hbar}}$). 
\end{rem}

Now, we will go one step further in the study of the above representation. Recall that in \S\ref{subsec:prelim-beta-Diff-to-C[[t]]} it was stated that matrices $a$, $b_{i}^{j-2i,j}$ and $c_i^{j,2i-j}$ behave as bilinear forms under the action of $\Gl(A)$. \emph{A fundamental observation is that all results and equations above are invariant under the action of the general linear group} (acting as base changes on the given basis $\{a_1,\ldots, a_n\}$). Let us briefly discuss this statement. For instance, let $S\in \Gl(A)$, then the row vector $q_i=(q_{i,1},\ldots, q_{i,n})$ is transformed to $q_i S^T$, accordingly the column vector $p_i$ goes to $S p_i$. The action of $S$ sends the bilinear form $\eta$  to $(S^{-1})^T \eta S^{-1}$ and analogously with $a$, etc.~. Note that, since $\eta$ and $b_0^{1,1}$ behave as bilinear forms, $\eta^{-1}b_0^{1,1}$ defines an endomorphism of $A$.  Finally, the Heisenberg algebra is also affected.

\begin{defn}
Let ${\mathbb H}^{\eta}$ be the Heisenberg algebra defined in \eqref{eq:pqcommutation}. Given a map of Lie algebras $\rho:{\mathcal W}_>\to {\mathcal U}({\mathbb H}^{\eta})$ and  $S\in\Gl(A)$,  we denote by $\rho^S$ the map of Lie algebras:
	$$
	{\mathcal W}_>\, \overset{\rho}\longrightarrow \,  {\mathcal U}({\mathbb H}^{\eta})
	\,\overset{\sim}\longrightarrow\,  {\mathcal U}({\mathbb H}^{(S^{-1})^T \eta S^{-1}})
	$$
where the last map sends 	$q_{i}$ to $q_{i} S^T$ and $p_i$ to $S p_i$. 
\end{defn}

With the hypothesis and choices of above, we have the following,

\begin{thm}\label{thm:Virasorodecomp}
Let $\rho:{\mathcal W}_>\to {\mathcal U}({\mathbb H}^{\eta})$ be as above; i.e. $\hat\rho$ defines  the Virasoro constraints,  \eqref{eq:Getzler-101} and \eqref{eq:GetzlerLk},  of a smooth projective variety with trivial odd cohomology and vanishing first Chern class. 

Then  there exists $S\in \Gl(A)$ such that  $\rho^S$ decomposes as the product of $n$ representations of dimension $1$; that is, there exist $\rho_i: {\mathcal W}_>\to \uh[(\C)]$ such that:
	\begin{equation}\label{eq:rho=sumrhoalpha}
	\rho^S \,=\, 
	\rho_1 \otimes 1\otimes\ldots \otimes 1 \, +\, \ldots\, +\,
	1\otimes\ldots \otimes 1 \otimes \rho_n
	\end{equation}
\end{thm}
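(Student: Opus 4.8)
The plan is to reduce Theorem~\ref{thm:Virasorodecomp} to the factorization criterion of Theorem~\ref{thm:rhodecomp}, by exhibiting a base change $S\in\Gl(A)$ that \emph{simultaneously} diagonalizes the three bilinear forms that control the factorization, namely $\eta$ (equivalently $a=\eta$, since here $a=\eta$ by the choice $F=b_{-1}^{0,1}p_1+q_1\eta q_1^T+q_{i+2}\eta p_i$), $b_0^{1,1}$, and $c_1^{1,1}$. Once these are all simultaneously in diagonal form, the decomposition $A=A_1\perp\cdots\perp A_n$ into the coordinate lines is orthogonal with respect to $\eta$ and compatible with the action of $F$ and with the forms $b_0^{1,1}$ and $c_1^{1,1}$, so Theorem~\ref{thm:rhodecomp} (applied iteratively, splitting off one line at a time) yields the desired decomposition $\rho^S=\rho_1\otimes 1\otimes\cdots+\cdots+1\otimes\cdots\otimes\rho_n$ into one-dimensional pieces.

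The key step is therefore the simultaneous diagonalization. First I would record that, by the choices in Theorem~\ref{thm:Lbar=Lhat}, we have $a=\eta=\bar\eta$ and $b_0^{1,1}=-(2\mu+1)\eta$, while $c_1^{1,1}=\frac1{16}\eta^T b_0^{1,1}\eta^{-1}(b_0^{1,1}\eta^{-1}+2)$ by the formula in \S\ref{subsec:representation}. The crucial algebraic observation is that $\eta^{-1}b_0^{1,1}=-(2\mu+1)$ is a \emph{diagonal} matrix (with entries $-(2\mu_\alpha+1)$), so $b_0^{1,1}$ and $c_1^{1,1}$ are both of the shape $\eta\cdot(\text{diagonal})$; more precisely, each of the relevant endomorphisms $\eta^{-1}b_0^{1,1}$ and $\eta^{-1}c_1^{1,1}$ is a polynomial in the single diagonal endomorphism $\mu$. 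Thus the whole problem collapses to finding $S$ that puts $\eta$ into a standard form while keeping $\mu$ diagonal, i.e. acting within each $\mu$-eigenspace. Concretely, I would choose $S$ to act block-diagonally on the generalized eigenspaces $A_\nu=\{a_\alpha:\mu_\alpha=\nu\}$ and, using \eqref{eq:mua+mb=0} (which forces $\eta$ to pair $A_\nu$ only with $A_{-\nu}$), restrict $\eta$ to a nondegenerate pairing between $A_\nu$ and $A_{-\nu}$; one then picks dual bases on these paired blocks so that $\eta$ becomes the antidiagonal standard form while $\mu$ stays diagonal. After this base change $\eta$, $b_0^{1,1}$ and $c_1^{1,1}$ are simultaneously in block/antidiagonal form that is compatible with the coordinate-line splitting.

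The main obstacle I anticipate is precisely this compatibility bookkeeping for $\eta$: because $\eta$ is not positive-definite and pairs $A_\nu$ with $A_{-\nu}$ rather than being diagonal on a single line, ``orthogonal decomposition into lines'' must be interpreted carefully—the hyperbolic blocks $A_\nu\oplus A_{-\nu}$ need to be further split, and one must verify that the resulting one-dimensional summands remain genuinely $\eta$-orthogonal and respect the forms $b_0^{1,1},c_1^{1,1}$. I would handle this by noting that $b_0^{1,1}$ and $c_1^{1,1}$, being $\eta$ times a function of $\mu$, automatically share $\eta$'s block structure, so no independent condition on them arises once $\eta$ is standardized; the only real work is choosing $S$ so that each hyperbolic $2$-block is put into diagonal $\eta$-form (e.g. by the usual $\pm$ rotation diagonalizing an antidiagonal symmetric $2\times2$ matrix), after which Theorem~\ref{thm:rhodecomp} applies line by line. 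The final clause of the theorem, that each $\rho_i(L_k)\in\uh[(\C)]_k'$, follows from the last assertion of Theorem~\ref{thm:rhodecomp} together with the fact that $\rho(L_k)\in\uh[(A)]_k'$ by construction in \S\ref{subsec:representation}.
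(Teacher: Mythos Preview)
Your strategy is exactly the paper's: choose $S\in\Gl(A)$ that simultaneously diagonalizes the bilinear forms $\eta$ (${}=a=b_{-1}^{i+2,i}$), $b_0^{1,1}$ and $c_1^{1,1}$, and then invoke Theorem~\ref{thm:rhodecomp} iteratively on the coordinate lines. The paper's proof does this in one sentence---pass to an $\eta$-orthonormal basis and assert that ``due to the choices'' the remaining forms become diagonal as well---whereas you spell out more, and to your credit you correctly isolate the one real obstacle: the hyperbolic blocks $A_\nu\oplus A_{-\nu}$ with $\nu\neq 0$, on which $\eta$ is antidiagonal.

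The gap is in your resolution of that obstacle. You require $S$ both to diagonalize $\eta$ \emph{and} to keep $\mu$ diagonal, and you then propose the usual $\pm$~rotation on each hyperbolic $2$-block; these two demands are incompatible. That rotation conjugates $\mu\vert_{A_\nu\oplus A_{-\nu}}=\operatorname{diag}(\nu,-\nu)$ into $\bigl(\begin{smallmatrix}0&\nu\\\nu&0\end{smallmatrix}\bigr)$, so in the new basis $b_0^{1,1}=-(2\mu+1)\eta$ picks up off-diagonal entries $\pm 2\nu$ and the hypothesis of Theorem~\ref{thm:rhodecomp} (block-diagonality of $b_0^{1,1}$) fails for the line splitting. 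More intrinsically, \eqref{eq:mua+mb=0} gives $\mu\eta+\eta\mu=0$, so the antisymmetric part of $b_0^{1,1}$ equals $-2\mu\eta$; congruence $M\mapsto (S^{-1})^T M S^{-1}$ preserves (anti)symmetry and a diagonal matrix is symmetric, hence $b_0^{1,1}$ can be brought to diagonal form by \emph{some} $S$ only when $\mu=0$. Thus the sentence ``after which Theorem~\ref{thm:rhodecomp} applies line by line'' does not go through whenever some $\mu_\alpha\neq 0$ (e.g.\ already for a K3 surface), and the paper's terse argument does not address this point either.
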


\begin{proof}
Let us consider a basis  which is orthonormal for $\eta$. Let $S\in\Gl(A)$ be the matrix associated to this change of basis. Due to the choices of $a$, $b_{-1}^{i+2,i}, b_0^{1,1}$ and $c_1^{1,1}$, it is trivial that $S$ also bring them into diagonal form; or, equivalently, there is a common orthogonal basis for all these bilinear pairings. Applying Theorem \ref{thm:rhodecomp}, one concludes.
\end{proof}

In this situation, for each $\alpha=1,\ldots,n$, one obtains a one dimensional representation $\rho_{\alpha}$ or, what is tantamount, our study essentially reduces to the case of Example~\ref{exam:dim1}. That is,  $\dim A=1$ , $a= \eta\in\C^{\ast}$ and, thus, $b_0^{1,1}=-\eta$. Setting $b_0:= b_{0}^{0,0}$, one has that \eqref{eq:HEparticular} gives:
	$$
	\begin{aligned}
	F \,& =\,  b_{-1}  p_1 +   q_1  \eta  q_1  +  q_{i+2}\eta p_i
	\\
	H\, &=\,  -  b_{-1}   p_3   -  q_i  \eta  p_i  - \frac18 
    	\\
	E\,&=\, - \frac1{4} b_{-1}  p_5 - \frac1{4} p_1 \eta  p_1 -  \frac1{4}  q_{i}  \eta  p_{i+2}
	\end{aligned}
	$$
where $b_{-1} $ and $\eta$ are computed from the $n$-dimensional setup \eqref{eq:Getzler-101}.

These three operators determine $\rho$ completely and, according to the map \eqref{eq:quant} and Theorem~\ref{thm:Lbar=Lhat}, one has:
	{\small
	{\begin{equation}\label{eq:barL}
		\begin{aligned}
	\bar L_{-1}\,& :=\, 
	b_{-1} \sqrt{2\hbar} \eta^{-1} \frac{\partial}{\partial \bar t_{0}}  
	+ \frac{1}{2\hbar} \eta \bar t_{0}^2
	+ \bar t_{i+1} \frac{\partial}{\partial \bar t_{i}} 
	\\
	\bar L_0 \,& :=\, \frac32  b_{-1} \sqrt{2\hbar} \eta^{-1} \frac{\partial}{\partial \bar t_{1}}  
	+ (i+\frac12) \bar t_{i}  \frac{\partial}{\partial \bar t_{i}}  + \frac{1}{16} 
	\\
	\bar L_1 \,& :=\, \frac{5!!}4  b_{-1} \sqrt{2\hbar} \eta^{-1} \frac{\partial}{\partial \bar t_{2}}  
	+ \frac{\hbar}2\eta^{-1} \frac{\partial}{\partial \bar t_0} \frac{\partial}{\partial \bar t_0} 
	+ (i+\frac12)(i+\frac32) \bar t_{i } \frac{\partial}{\partial \bar t_{i+1}} 
	\end{aligned}
	\end{equation}}}

%\begin{rem}
%Let us sketch how the operators of \cite[\S4.3]{JarvisKimura} fit into our approach. They focus on the study of the quantum cohomology of the orbifold consisting of a point with action of a finite group. In their case, $\operatorname{dim}(X)=0$ and, thus, $H^{p_{\alpha},q_{\alpha}}(X)=0$ unless $p_{\alpha} = q_{\alpha}=0$. Accordingly, $\mu_{\alpha}=0$. Under these hypothesis, it holds that Getzler's operators \eqref{eq:GetzlerLk} (resp. \eqref{eq:barL}) coincide with operators $L_n$ (resp. $L_n^{(\alpha)}$) of \cite[\S4.3]{JarvisKimura}. Further, our expression \eqref{eq:scaleequivmultiKP} corresponds to their identity (4.4). 
%\end{rem}

%%%%%%%%%%%%%%
\subsection{On the solutions for the $1$-dimensional situation}\label{subsec:solutions1dim}
 Once the representation has been decomposed in terms of $1$-dimensional parts, we wonder if one could deduce some properties of the solutions of the Virasoro constraints. Our approach follows closely our previous work \cite{Plaza-AlgSol} which is inspired in \cite{KacSchwarz}. Briefly, the idea is to show that each of our representations $\rho_i$ come from an action of ${\mathcal W}_>$ on the Sato Grassmannian and that that they admit exactly one solution $\tau_i$, which are $\tau$-functions for the KP hierarchy and, then, conclude that the product $\tau_1\cdot\ldots\cdot \tau_n$ is a solution for $\rho^S$. 

Let us begin recalling that the Sato Grassmannian is the set of subspaces $U\subset \C((z))$ such that the kernel and cokernel of $\pi_U:U\to \C((z))/\C[[z]]$ are finite dimensional (\cite{Sato, SW}). Actually, it is an infinite dimensional scheme (\cite{AMP}) and carries a distinguished line bundle, the determinant line bundle ${\mathbb D}$. Each integer $n$ correspondes to a connected component, $\Gr^n$; namely, those subspaces $U$ such that $\dim\ker\pi_U-\dim\coker\pi_U=n$. Sato-Sato's achievement was to show that there was a bijection between the set of those $U$ s.t. $\pi_U$ is an isomorphism and the set of functions $\tau(t)\in\C[t_1,t_2,\ldots]]$ with $\tau(0)=1$ and fulfilling the KP hierarchy (thus, each $U$ has a $\tau$-function; see \cite{Sato,SW,AMP} for details). The same holds for the Sato grassmannian of $\C((z))^{\oplus n}$ and the $n$-multicomponent KP hierarchy.

The fact that the space of global section of ${\mathbb D}^*$ is isomorphic to the semi-infinite wedge product or Fermion Fock space:
	{\small $$
	H^0(\Gr^n,{\mathbb D}^*)\simeq \wedge^{\frac{\infty}2}\C((z)) \, =\,
	<\left\{ 
	\begin{gathered}
	z^{i_1}\wedge z^{i_2}\wedge\ldots \text{ s.t. } i_1<i_2<\ldots \\
	 \text{ and } i_k=k+n \,\,  \forall k>>0
	 \end{gathered}\right\}>
	 $$}
have allowed its extensive use in CFT's (in particular, by the Japanese school, see \cite{KSU} and references therein). Recall that the boson-fermion correspondence is the isomorphism (we restrict us to $\Gr^0$; that is, the charge $0$ sector):
	$$
	 \wedge^{\frac{\infty}2}\C((z)) \, \simeq \, \C[[t_1,t_2,\ldots]]
	 $$
that maps $z^{i_1}\wedge z^{i_2}\wedge\ldots$ to the Schur polynomial associated with the partition $1-i_1\geq 2-i_2\geq \ldots$. Similarly, the space of global sections of ${\mathbb D}^*$ over the Sato grassmannian of $\C((z))^{\oplus n}$ is isomorphic to $\C[[\{t_{i,\alpha} \vert \alpha=1,\ldots, n , i=1,2,\ldots \}]]$.

Given a subgroup of the restricted linear group of $\C((z))$ (see \cite{SW}), one has an induced action on $\Gr^n(\C((z)))$. Moreover, if the action preserves the determinant bundle, it will yield a projective action on the space of global sections. In fact, an analogous statement holds for the case of Lie algebras. Let us illustrate this issue with the case of the Lie algebra $\operatorname{Diff}^1(\C((z)))$ of first order differential operators on $\C((z))$. An operator $D\in\operatorname{Diff}^1(\C((z)))$ acts on sections as follows. If the matrix $(d_{ij})$ corresponding to $D$ w.r.t. the basis $\{z^i\}$ has no non-trivial diagonal elements, then:
    $$
    D ( z^{i_1}\wedge z^{i_2}\wedge\ldots)
    \,:=\,
     D ( z^{i_1})\wedge z^{i_2}\wedge\ldots +
     z^{i_1}\wedge  D ( z^{i_2})\wedge\ldots + \ldots
     $$
If the matrix $(d_{ij})$ is diagonal, then:
    $$
    D ( z^{i_1}\wedge z^{i_2}\wedge\ldots)
    \,:=\,
     \sum_{j=1}^{\infty} (d_{i_j i_j}- d_{jj})  z^{i_1}\wedge z^{i_2}\wedge\ldots
     $$
Having in mind the boson-fermion correspondence, the above construction gives rise to a linear map:
    \begin{equation}\label{eq:betaDiffEnd}
    \begin{aligned}
    \operatorname{Diff}^1(\C((z))) \,&
    \overset{\beta}\longrightarrow
    \End(\C[[t_1,t_2,\ldots]])
    \\
    D\,& \longmapsto \, \beta( D)
    \end{aligned}
    \end{equation}
which defines a projective representation. Note, nevertheless, that if we are given a map of Lie algebras $\sigma:{\mathcal W}_>\to\operatorname{Diff}^1(\C((z)))$, then, $\beta\circ\sigma$ can be canonically promoted to a linear representation since ${\mathcal W}_>$ has no non-trivial central extensions. Indeed, for this goal, if suffices to add a constant to $\beta\circ\sigma(L_0)$.

The following results will show that the operators of \S\ref{subsec:baby} arise from the previous setup. 

\begin{lem}\label{lem:DinDiffistypei}
Let $D\in\operatorname{Diff}^1(\C((z)))$. Then, $\beta(D)$  is of type $i$ if and only if $D$ is a linear combination of $ 1$, $ z^{-(2i+3)} $ and $  z^{-2i}(z\partial_z+\frac{1-2i}2)$. 
\end{lem}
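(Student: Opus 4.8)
The plan is to analyze how a first-order differential operator $D = g(z)\partial_z + h(z)$ on $\C((z))$ acts on the Fermion Fock space via $\beta$, and to match the resulting operator against the definition of ``type $i$'' given in \S\ref{subsec:baby} for operators on $\C[[t_1,t_2,\ldots]]$. The key computational tool is the boson-fermion correspondence together with the explicit action of $\operatorname{Diff}^1(\C((z)))$ on semi-infinite wedges described just before the statement. The essential point is that the boson-fermion correspondence translates the action of multiplication operators $z^k$ and of the ``energy'' operators $z^k\partial_z$ into standard creation/annihilation operators on $\C[[t_1,t_2,\ldots]]$, so that quadratic-in-oscillator expressions correspond precisely to second-order differential operators in the $t$ variables.

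First I would recall the standard dictionary for the basis operators. Write the general first-order operator as a $\C$-linear combination of the monomials $z^k$ (acting by multiplication) and $z^k \partial_z$. Under $\beta$ and the boson-fermion correspondence, multiplication by $z^k$ and the vector field $z^k\partial_z$ become explicit operators on $\C[[t_1,t_2,\ldots]]$ expressed through $t_j$ and $\partial/\partial t_j$; this is the normal-ordered bosonization of the current and Virasoro modes. The crucial feature is the grading: the operator $z^k$ raises the fermionic charge degree by $-k$ (equivalently shifts the total $t$-degree), and likewise $z^k\partial_z$ carries a definite degree $-k$. Matching this grading with the degree bookkeeping in Definition~\ref{defn:type} forces that the only monomials contributing to a type-$i$ operator are those of the correct degree, namely $z^{-(2i+3)}$ (a pure multiplication term producing a single $\partial/\partial \bar t$), the combination $z^{-2i}(z\partial_z + c)$ for the appropriate constant $c$ (producing the mixed $\bar t\,\partial/\partial\bar t$ and second-order $\partial^2$ terms), and the identity (the constant term, allowed only when $i=0$).

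The main technical step is pinning down the exact constant $\frac{1-2i}2$ in the energy term. I would compute $\beta(z^{-2i}(z\partial_z + c))$ explicitly and observe that the normal-ordering constant and the relative weighting of the $\bar t\,\partial/\partial\bar t$ terms versus the $\partial^2$ terms are both controlled by $c$; requiring that the output contain \emph{no} spurious lower-order or constant pieces (so that it lands in $\uh_i$ with the structure of a clean type-$i$ operator rather than a type-$i$ operator plus extraneous terms) forces $c = \frac{1-2i}2$. Equivalently, this is the value for which $z^{-2i}(z\partial_z+c)$ is the generator behaving correctly under the $\sl(2)$-weight grading, i.e.\ transforming homogeneously of degree $2i$ under the $L_0$ action, matching the $[H,T]=2iT$ condition that singles out $\uh_i'$.

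The hard part will be carefully tracking the normal-ordering and the half-integer shifts coming from the charge-$0$ sector and from the rescaling $\bar t_i := \sqrt{2\hbar}(2i+1)!!\,t_{2i+1}$ relating the two sets of variables; a sign or factor error there would spoil the constant $\frac{1-2i}2$. I would organize this by first verifying the three generating monomials give, respectively, exactly the three allowed families of terms in the type-$i$ definition (with no others), and conversely that any $D$ whose $\beta$-image is of type $i$ can have no other monomials, since each forbidden monomial $z^k$ or $z^k\partial_z$ contributes a term of a degree absent from $\uh_i$. Both implications then reduce to the degree argument plus the single constant computation, so once the dictionary and the grading are set up the verification is essentially bookkeeping.
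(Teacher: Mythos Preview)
Your overall strategy—bosonize a basis of $\operatorname{Diff}^1(\C((z)))$ and match against Definition~\ref{defn:type} by degree—is exactly what the paper does, but the paper's execution is much shorter because it starts from the spanning set $\{1,\ z^m,\ z^m(z\partial_z+\tfrac{m+1}{2})\}_{m\in\Z}$ rather than $\{z^k,\ z^k\partial_z\}$. The point of this choice is that the bosonization of $z^m(z\partial_z+\tfrac{m+1}{2})$ is already a \emph{purely quadratic} expression in the oscillators (no linear tail), so one simply reads off the degree of each basis element and sees which ones land in $\uh_i$. With your basis, $\beta(z^k\partial_z)$ has a quadratic piece of degree $k-1$ plus a linear piece proportional to $\beta(z^{k-1})$, so you would have to recombine terms—this is the step where you propose to ``determine $c$''—whereas the paper sidesteps it entirely by quoting the formulas from~\cite{Kaz}.

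Two specific confusions to fix. First, the rescaling $\bar t_i=\sqrt{2\hbar}(2i+1)!!\,t_{2i+1}$ from \S\ref{subsec:baby} is irrelevant here: the lemma and its proof live entirely in the $t_j$ variables via the map~\eqref{eq:quant}, and the $\bar t$'s belong to a separate comparison. Second, your explanation that $c=\tfrac{1-2i}{2}$ is fixed by the $[H,T]=2iT$ condition (i.e.\ by membership in $\uh_i'$) is not correct: the lemma concerns $\uh_i$, not $\uh_i'$, and every operator $z^{-2i}(z\partial_z+c)$ transforms with the same weight under $L_0$ regardless of $c$. The constant $\tfrac{1-2i}{2}$ is singled out purely by the bosonization—it is the unique value for which $\beta$ of that operator is quadratic with no stray linear term—and this is precisely why the paper's spanning set is built the way it is. Once you adopt that basis, both implications follow by inspecting degrees and orders (constant, linear, quadratic) of the images, with nothing further to compute.
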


 \begin{proof}
Recall that $\operatorname{Diff}^1(\C((z)))$ is generated as $\C$-vector space by $1$, $z^m$ for $m\in\Z$ acting as an homothety and $ z^{m}(z\partial_z+\frac{m+1}2)$ for $m\in\Z$. Let us recall from~\cite[Table 1]{Kaz} the description of  the operators induced by them via the boson-fermion correspondence: 
    $$
    \beta(z^m) \,=\,
        \begin{cases}
        m t_m & \text{ for } m >0 \\
        0 & \text{ for } m=0 \\
        \frac{\partial}{\partial t_{-m}} &\text{ for } m<0
        \end{cases}
    $$
and, for $m>0$, 
    {\small $$
    \beta\big(z^m(z\partial_z+\frac{1+m}2)\big)
    \,  = \,
    \frac12 \sum_{j=1}^{m-1} j(m-j) t_j t_{m-j} +
    \sum_{j=1}^{\infty} (j+m) t_{m+j} \frac{\partial}{\partial t_{j}}
    $$}
Analogously, the
action of $z^{-m}(z\partial_z+\frac{1-m}2)$ on $\C((z))$
corresponds to the action of:
    {\small $$
	\beta\big(z^{-m}(z\partial_z+\frac{1-m}2))\big)
    \, = \,
    \sum_{j=1}^{\infty} j t_j  \frac{\partial}{\partial t_{m+j}}  +
    \frac12\sum_{j=1}^{m-1} \frac{\partial}{\partial t_{j}} \frac{\partial}{\partial t_{m-j}}
    $$}
Finally, recall that the case $m=0$ is regularized as follows:
    {\small $$
    \beta\big(z^{-m}(z\partial_z+\frac{1-m}2))\big)    \, := \,  \sum_{j=1}^{\infty} j t_j \frac{\partial}{\partial t_{-j}}
    $$}
Checking the degrees, the conclusion follows.
\end{proof}

\begin{lem}\label{lem:sigma}
Let $\sigma\in\homlie({\mathcal W}_>,\operatorname{Diff}^1(\C((z))))$. Recall that $\Vkdv = \C[[t_1,t_3,\ldots]]$. 

$\beta({\sigma(L_i)}) \vert_{\Vkdv}$ takes values in $\Vkdv$ and it is of type $i$ for all $i$, if and only if there exists $s,  t\in \C$ such that:
    $$
    \sigma(L_i)\,=\,
    t^i\Big(
        \frac12 z^{-2i}(z\partial_z + \frac{1-2i}{2})+ s z^{-2i-3}\Big)
        \qquad \forall i\geq -1
    $$
\end{lem}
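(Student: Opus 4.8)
The plan is to characterize the map $\sigma$ by exploiting the isomorphism $\mathcal{W}_> \cong \sigma(\mathcal{W}_>)$ and the rigidity coming from the bracket relations, together with the pointwise constraint provided by Lemma~\ref{lem:DinDiffistypei}. The key observation is that Lemma~\ref{lem:DinDiffistypei} already tells us \emph{which} operators in $\operatorname{Diff}^1(\C((z)))$ can possibly appear: since $\beta(\sigma(L_i))\vert_{\Vkdv}$ must be of type $i$, the operator $\sigma(L_i)$ must lie in the span of $1$, $z^{-(2i+3)}$ and $z^{-2i}(z\partial_z + \frac{1-2i}{2})$. Thus I would write
\begin{equation}\label{eq:sigmaansatz}
\sigma(L_i)\,=\, \lambda_i\, z^{-2i}(z\partial_z + \tfrac{1-2i}{2}) \,+\, \mu_i\, z^{-2i-3} \,+\, \nu_i
\end{equation}
for unknown scalars $\lambda_i,\mu_i,\nu_i\in\C$ (the constant $\nu_i$ only being relevant for $i=0$, where type $0$ permits a constant term, and where the values $z^{-2i}$ with $i=0$ and $z^{-2i-3}=z^{-3}$ must be examined carefully). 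The goal is then to show that the bracket relations of $\mathcal{W}_>$ force $\lambda_i = t^i/2$, $\mu_i = s\,t^i$ and $\nu_i=0$.

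The main engine is to compute the brackets in $\operatorname{Diff}^1(\C((z)))$ directly and match them against $[\sigma(L_i),\sigma(L_j)] = (i-j)\sigma(L_{i+j})$. First I would record the fundamental bracket among the first-order generators: writing $D_m := z^{-2m}(z\partial_z + \frac{1-2m}{2})$, a short computation gives $[D_i, D_j] = 2(i-j)\, D_{i+j}$, while $D_i$ acting on the monomial $z^{-2j-3}$ produces a multiple of $z^{-2(i+j)-3}$ and monomials bracket to zero among themselves. Matching the leading (differential) part of $[\sigma(L_i),\sigma(L_j)]=(i-j)\sigma(L_{i+j})$ yields the recursion $2\lambda_i\lambda_j = \lambda_{i+j}$ for all admissible $i,j$; together with the normalization that fixes $\lambda_{-1}$ (the hypothesis guarantees a nonzero coefficient there, so one may set $\lambda_{-1}=t^{-1}/2$ up to rescaling and thereby define $t$), this is the classical functional equation whose solution is $\lambda_i = \frac12 t^i$. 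I would then match the monomial part to obtain a compatible recursion for $\mu_i$ of the form $\mu_i = s\,t^i$, where $s$ is determined by $\mu_{-1}$, and finally show $\nu_i\equiv 0$ by inspecting the relation $[\sigma(L_1),\sigma(L_{-1})] = 2\sigma(L_0)$ (or any relation with nonzero central-type contribution) to see the constant cannot survive.

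The hard part will be handling the low-degree anomalies where the general formulas degenerate: specifically the regularized case $m=0$ in Lemma~\ref{lem:DinDiffistypei} (the operator $D_0 = z\partial_z + \frac12$ whose $\beta$-image requires the normal-ordering correction), and the overlaps among the three basis operators when $i$ is small, so that one must verify that no extra constant terms $\nu_i$ for $i\neq 0$ are permitted and that the $z^{-3}$ term for $i=0$ coincides with the $z^{-2i-3}$ family. I expect the cleanest route is to first establish the multiplicative structure $\lambda_i = \frac12 t^i$ and $\mu_i = s\,t^i$ using only the brackets $[\sigma(L_{-1}),\sigma(L_j)]$ and $[\sigma(L_1),\sigma(L_j)]$, which suffice to propagate from the $\sl(2)$-triple to all of $\mathcal{W}_>$ by the same inductive mechanism used in Theorem~\ref{thm:W^+-sl2}, and to deal with the constant $\nu_i$ separately as a rank-one obstruction. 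The converse direction is routine: given the stated closed form, one checks that $D_i := t^i(\frac12 z^{-2i}(z\partial_z+\frac{1-2i}{2}) + s z^{-2i-3})$ satisfies $[D_i,D_j]=(i-j)D_{i+j}$ by the bracket computation above, so $\sigma$ is a Lie algebra homomorphism, and Lemma~\ref{lem:DinDiffistypei} guarantees each $\beta(\sigma(L_i))\vert_{\Vkdv}$ is of type $i$ and preserves $\Vkdv$.
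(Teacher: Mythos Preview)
Your approach is correct and genuinely different from the paper's. The paper does not solve the bracket relations directly: instead it quotes a classification from \cite{Plaza-AlgSol} asserting that every nonzero $\sigma\in\homlie(\mathcal{W}_>,\operatorname{Diff}^1(\C((z))))$ is parametrized by a triple $(h(z),c,b(z))$ via the closed formula
\[
\sigma(L_i)\;=\;\frac{-h(z)^{i+1}}{h'(z)}\partial_z\;-\;(i+1)\,c\,h(z)^i\;+\;\frac{h(z)^{i+1}}{h'(z)}\,b(z),
\]
and then matches this against the ansatz coming from Lemma~\ref{lem:DinDiffistypei} term by term to extract $h(z)=tz^{-2}$, $c=\tfrac12$, and the specific $b(z)$. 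You instead stay inside the three-parameter ansatz and impose the Witt relations by hand; your identity $[D_i,D_j]=2(i-j)D_{i+j}$ is correct, the functional equation $2\lambda_i\lambda_j=\lambda_{i+j}$ follows, and taking $j=0$ in the relation for the monomial part already gives $\mu_i=t^i\mu_0$ without further work. Your route is more elementary and entirely self-contained; the paper's route is shorter once the external classification is granted, and it has the conceptual payoff of explaining why the answer is governed by a single change of coordinate $h(z)$ on the formal disc.

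One point to tighten: your parenthetical ``the hypothesis guarantees a nonzero coefficient there'' is not supported by the statement of the lemma. What you should argue instead is that if $\lambda_{-1}=0$ then the relation $2\lambda_{-1}\lambda_j=\lambda_{j-1}$ forces every $\lambda_i$ to vanish, whereupon the remaining bracket relations kill all $\mu_i$ and $\nu_i$ as well, so $\sigma=0$; this degenerate case lies outside the asserted formula (which needs $t\neq 0$ at $i=-1$), and the paper's own proof likewise silently restricts to $\sigma\neq 0$ when invoking the classification. State this dichotomy explicitly rather than appealing to a nonexistent hypothesis.
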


\begin{proof}
The ``if'' part follows from Lemma \ref{lem:DinDiffistypei} and the fact that $\sigma$ as in the statement defines a map of Lie algebras. Let us now deal with the ``only if'' part.

We know from~\cite[\S2]{Plaza-AlgSol} (see also~\cite{Plaza-Opers}) that there is a 1-1-correspondence:
    {\footnotesize
    $$ % \begin{equation}\label{eq:1-1intro}
    \left\{\begin{gathered}
    \sigma\in\Hom_{\text{Lie-alg}}({\mathcal
    W}_>,\operatorname{Diff}^1(\C((z))))
   \\
    \text{such that }\sigma\neq 0
    \end{gathered}\right\}
    \, \overset{1-1}\longleftrightarrow \, % \ar@{<-->}[r]^{1-1} &
    \left\{\begin{gathered}
    \text{triples }(h(z),c,b(z))\text{ such that}
    \\
    h'(z)\in\C((z))^*,\; c\in\C,\; b(z)\in\C((z))
    \end{gathered}\right\}
    $$ % \end{equation}
    }
which is explicitly given by:
    \begin{equation}\label{eq:explicitexpreL_idiffC((z))}
    \sigma(L_i) \,=\,
    \frac{-h(z)^{i+1}}{h'(z)} \partial_z  -
    (i+1) c \cdot h(z)^i + \frac{h(z)^{i+1}}{h'(z)}b(z)
    \end{equation}

On the other hand, due to Lemma~\ref{lem:DinDiffistypei}, the fact that $\sigma(L_i)$ is of type $i$ implies that there exist $r_i,s_i,t_i\in\C$ satisfying:
    \begin{equation}\label{eq:explicitexpreL_ilinearcomb}
    \sigma(L_i) \,=\,
    r_i\cdot 1 + s_i\cdot z^{-(2i+3)} + t_i\cdot z^{-2i}(z\partial_z+\frac{1-2i}2)
    \end{equation}

Comparing the coefficients of $\partial_z$ in the previous identities, it follows that $h(z)=\frac{t_i}{t_{i-1}} z^{-2}$. Hence, the quotients $\frac{t_i}{t_{i-1}}$ are all equal to a constant, say $t$. Hence $t_i=t^i t_0$  and $h(z)=t z^{-2}$. Further, the case $i=0$ yields $t_0=\frac12$.

Pluging this in equations~\eqref{eq:explicitexpreL_idiffC((z))} and~\eqref{eq:explicitexpreL_ilinearcomb}, one gets:
    {\small
    $$
    -(i+1)c(tz^{-2})^i
    -\frac{(tz^{-2})^{i+1}}{2 t z^{-3}} b(z)
    \,=\,
    r_i + s_i z^{-(2i+3)} + \frac12 t^i z^{-2i}(\frac{1-2i}2)
    $$}
and, thus:
    $$
    b(z)\,=\,
    -2(i+1)c z^{-1}
    -  2 t^{-i} r_i z^{2i-1} - 2 s_i t^{-i} z^{-4} - \frac12  z^{-1}(1-2i)
    $$
Observe that the l.h.s. does not depend on $i$, one gets many conditions. First, for $i\neq 0$ the term $z^{2i-1}$ is an odd power of $z$ different from $z^{-1}$ that can not be cancelled with any other term; consequently, $r_i=0$ for $i\neq 0$. Further, since the coefficient of $z^{-4}$ in $b(z)$ has to be independent of $i$, it follows that $t^{-i} s_i$ is a constant independent of $i$; and, thus, equal to $s_0$. Finally, the coefficient of $z^{-1}$ in $b(z)$ is:
    $$
    -2(i+1)c - 2 r_0 \delta_{i,0} - \frac12 (1-2i)
    $$
Since it has to be independent of $i$, it follows that $c=\frac12 $, $r_0=0$ and, thus:
    $$
    b(z)\,=\, -\frac32  z^{-1} - 2 s_0 z^{-4}
    $$
Substituting $h(z), c, b(z)$ into expression~\eqref{eq:explicitexpreL_idiffC((z))} and setting $s=s_0$, one obtains the result.
\end{proof}

Let us recall that the rescaling of the variables yields an action on the boson Fock space. More precisely, $\lambda =\{\lambda_i \} \in\prod_{i \text{ odd}}\C^*$ maps $t_i$ to $\lambda_i t_i$. Accordingly, it acts on $ \homlie({\mathcal W}_>,\End(\Vkdv))$ and sends $\rho$ to $\rho^{\lambda}:=\lambda\circ\rho\circ \lambda^{-1}$. 

%\begin{defn}
%Two functions $\tau_1(t), \tau_2(t)\in \C[[t_1, t_3,\ldots]]$ are said to be scale equivalent if there exists 
%$\lambda =\{\lambda_i \} \in\prod_{i \text{ odd}}\C^*$ such that:
%	$$
%	\tau_1(t_1,t_3,\ldots )\, =\, \tau_2(\lambda_1 t_1,\lambda_3 t_3,\ldots)
%	$$
%Similarly,  $\rho_1,\rho_2$ are called scale equivalent  if there exists $\lambda =\{\lambda_i \} \in\prod_{i \text{ odd}}\C^*$ such that $\rho_1=\rho_2^{\lambda}$. 
%\end{defn}

\begin{defn}
The $\lambda$-scaled KP hierarchy is the hierarchy obtained by replacing $t_i$ by $\lambda_i t_i$ in the KP hierarchy (for given  $\lambda=(\lambda_i ) \in\prod_{i \in{\mathbb N}}\C^*$).
A function $\tau_1(t)\in \C[[t_1, t_2,\ldots]]$ is called $\tau$-function of the $\lambda$-scaled KP hierarchy if $\tau(\lambda^{-1}t):=\tau(\lambda_1^{-1}t_1, \lambda_2^{-1} t_2,\ldots)$ is a $\tau$-function of the KP hierarchy. For brevity, we simply say scaled KP. We do similarly for KdV, multicomponent KP. 
\end{defn}

Note that the $\lambda$-scaled KP hierarchy for $\lambda=(\mu^i)$ for $\mu\in\C^*$ coincides with the KP hierarchy. However, this does not happen in general. 

The following Lemma is the key point to go from Virasoro to KdV. 

\begin{lem}\label{lem:Diff1scaleW}
The map $\beta$ of \eqref{eq:betaDiffEnd} induces a bijection between:
\begin{itemize}
	\item the set of $ \sigma\in\Hom_{\text{Lie-alg}}({\mathcal  W}_>,\operatorname{Diff}^1(\C((z))))$ such that there exist $s\in \C$ % and $r(z)\in \C[[z^2]]$ 
	satisfying:
	$$
	 \sigma(L_i)= \frac12 z^{-2i}\big(z \partial_z + \frac{1-2i}{2}\big)+ s z^{-2i-3}  
	     $$
	\item the set of scale equivalence classes of $\rho \in \homlie({\mathcal W}_>,\End(\Vkdv))$ whose coefficients of quadratic terms in $\rho(L_{-1})$ do not vanish and such that  $\rho(L_i)$  is of type $i$ for $i\geq -1$. %, up to a linear function in $t$.        
	\end{itemize}
\end{lem}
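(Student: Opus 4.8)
The plan is to lean on the classification already achieved in Lemma~\ref{lem:sigma}, which identifies every $\sigma\in\homlie(\mathcal{W}_>,\operatorname{Diff}^1(\C((z))))$ whose $\beta$-image is valued in $\Vkdv$ and of type $i$ as $\sigma_{s,t}(L_i)=t^i\bigl(\tfrac12 z^{-2i}(z\partial_z+\tfrac{1-2i}2)+s\,z^{-2i-3}\bigr)$. The first set is exactly the slice $t=1$, so it is parametrised by $s\in\C$; the conceptual content of the lemma is that the reparametrisation scale $t$ is the precise freedom that the passage to scale-equivalence classes removes. I would therefore begin by recording the dictionary between the two scalings. Since $L_i\mapsto t^iL_i$ is the grading automorphism of $\mathcal{W}_>$ one has $\sigma_{s,t}=\sigma_{s,1}\circ(L_i\mapsto t^iL_i)$, and under $\beta$ this automorphism is realised by the variable rescaling $\lambda_k=\nu^{k}$ with $\nu=t^{-1/2}$. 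A monomial-by-monomial check then shows that such a $\lambda$ multiplies every quadratic term of $\beta(\sigma_{s,1}(L_i))$ by $t^i$ and the linear term $\partial_{t_{2i+3}}$ by $\nu^{-(2i+3)}$, so that $\lambda$ carries $\beta\circ\sigma_{s,1}$ to $\beta\circ\sigma_{s',t}$ with $s'=s\,\nu^{-3}$; this single computation makes the induced map well defined on classes and transparent.

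Next I would check that $\beta$ lands in the target. For $\sigma_{s,1}$ each $\sigma_{s,1}(L_i)$ is a combination of $1$, $z^{-(2i+3)}$ and $z^{-2i}(z\partial_z+\tfrac{1-2i}2)$, so $\beta(\sigma_{s,1}(L_i))$ is of type $i$ by Lemma~\ref{lem:DinDiffistypei}, and since a type-$i$ operator involves only odd-indexed variables it preserves $\Vkdv$. Because $\mathcal{W}_>$ has no nontrivial central extension, $\beta\circ\sigma_{s,1}$ promotes canonically to a genuine representation $\rho_s$ after adjusting the constant term of $\rho_s(L_0)$. Computing $\beta$ of $\tfrac12 z^{2}(z\partial_z+\tfrac32)$ gives $\tfrac14 t_1^2+\tfrac12\sum_j(j+2)\,t_{j+2}\partial_{t_j}$, whose quadratic coefficients are nonzero, so $\rho_s$ belongs to the second set and $[\rho_s]$ is well defined.

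For surjectivity I would run the normalisation backwards. Starting from an arbitrary $\rho$ in the second set, the nonvanishing of the quadratic coefficients of $\rho(L_{-1})$ lets me fix $\lambda_1$ to normalise the coefficient of $t_1^2$ and then choose $\lambda_3,\lambda_5,\dots$ recursively to normalise the coefficients of $t_{j+2}\partial_{t_j}$, bringing $\rho(L_{-1})$ to the standard form $\tfrac14 t_1^2+\tfrac12\sum_j(j+2)t_{j+2}\partial_{t_j}+s\,\partial_{t_1}$. Once $\rho(L_{-1})$ is standardised, Theorem~\ref{thm:W^+-sl2} reduces the identification of $\rho$ with $\rho_s$ to matching the $\sl(2)$-data, and Lemma~\ref{lem:sigma} provides the unique $\sigma_{s,1}$ whose $\beta$-image realises it. Injectivity then follows from the same bookkeeping: on the standardised slice the only scalings preserving the normal form are $\lambda_k=\pm1$, so the class of $\rho_s$ determines $s$ up to the reflection $z\mapsto -z$ (i.e. $\lambda_k=(-1)^k$), which must be accounted for in reading the statement.

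The step I expect to be the main obstacle is the final identification within surjectivity. After standardising $\rho(L_{-1})$ there is, a priori, still a free coefficient in $\rho(L_1)$ — the coefficient of $\partial_{t_1}^2$, i.e. the datum $c_1^{1,1}$ of Theorem~\ref{thm:sl2-symmmatrices} — and one must show that the combined requirements of being a type-$i$ Lie-algebra homomorphism at every level leave no room for it, so that the rigid shape produced by Lemma~\ref{lem:sigma} is forced. In other words, the delicate point is not the formal scaling bookkeeping of the earlier steps but verifying that the standardised $\rho(L_{-1})$, together with the non-degeneracy hypothesis, admits a unique type-$i$ extension realisable inside $\operatorname{Diff}^1(\C((z)))$, which is exactly where the rigidity of Lemma~\ref{lem:sigma} and the uniqueness of the extension in Theorem~\ref{thm:W^+-sl2} must be invoked in tandem.
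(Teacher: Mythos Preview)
Your overall architecture matches the paper's: compute $\beta\circ\sigma_s$ explicitly to land in the target, promote to a genuine homomorphism by adding $\tfrac1{16}\delta_{i,0}$, and for the converse rescale $\rho(L_{-1})$ to standard form and then identify $\rho^\lambda$ with the $\sigma$-originating representation. The paper's own argument for the converse is equally brief, concluding after the rescaling that ``Lemma~\ref{lem:sigma} and the previous discussion show that $\rho^{\lambda}$ is the representation associated to~$\sigma$.''

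The obstacle you single out is real, and your suggested resolution does not close it. By Theorem~\ref{thm:W^+-sl2}, once $F$ is standardised the extensions $\rho$ with $\rho(L_i)\in\uh_i$ are in bijection with $\sl(2)$-maps, and in dimension~$1$ (with $\eta=1$, $a=\tfrac14$, $b_{-1}^{j+2,j}=\tfrac12$) Lemma~\ref{lem:FyieldsH} forces $b_0^{1,1}=-1$ while Theorem~\ref{thm:sl2-symmmatrices} leaves the scalar $c=c_1^{1,1}$ free. Running the recursions \eqref{eq:EFH:3}--\eqref{eq:EFH:4} gives
\[
b_1^{1,3}=-\tfrac{2}{3}(1+c),\quad b_1^{3,5}=-\tfrac{2}{15}(4+c),\quad b_1^{5,7}=-\tfrac{2}{35}(9+c),\ \ldots
\]
These equal the common value $-\tfrac12$ required by $\beta\bigl(\tfrac12 z^{-2}(z\partial_z-\tfrac12)\bigr)$ only when $c=-\tfrac14$; for any other $c$ one obtains a perfectly good Lie-algebra map $\rho:\mathcal W_>\to\uh$ with every $\rho(L_i)$ of type~$i$, yet $\rho(L_1)$ is \emph{not} the $\beta$-image of anything in $\operatorname{Diff}^1(\C((z)))$. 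So ``type~$i$ at every level'' does not pin down $c$, Lemma~\ref{lem:sigma} cannot help (it classifies $\sigma$'s in $\operatorname{Diff}^1$, not $\rho$'s in $\End(\Vkdv)$), and Theorem~\ref{thm:W^+-sl2} gives uniqueness only after $H$ and $E$ are fixed. The surjectivity half of the bijection therefore does not hold for the second set as written; one needs an extra hypothesis on $\rho$---for instance that each $\rho(L_i)$ lie in the $\beta$-image of the $3$-dimensional span of Lemma~\ref{lem:DinDiffistypei}---or an argument neither you nor the paper supplies.

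Your remark on injectivity is also correct and is not addressed in the paper: the residual scaling $\lambda_i=-1$ for all odd~$i$ preserves the standardised $\rho(L_{-1})$ and sends $s\mapsto -s$, so $\sigma_{s,1}$ and $\sigma_{-s,1}$ land in the same class.
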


\begin{proof}
First, we prove the statement with no reference to $r(z)$ on the first item and with no mention to a linear function on the second item. Under these circumstances,  given $\sigma$ as in the statement, Lemma~\ref{lem:DinDiffistypei} shows that $(\beta\circ\sigma)(L_i) \vert_{\Vkdv}$ takes values in $\Vkdv$ and it is of type $i$ for all $i$. An explicit computation yields:
	$$
	\begin{aligned}
    (\beta\circ\sigma)(L_{-1})  \,& = \,
    s \frac{\partial}{\partial t_{1}}+
    \frac14 t_1^2 +
    \frac12 \sum_{j=1}^{\infty} j t_{j+2} \frac{\partial}{\partial t_{j}}
\\
    (\beta\circ\sigma)(L_{0}) & \,= \,
     s \frac{\partial}{\partial t_{3}} +
    \frac12 \sum_{j=1}^{\infty} j t_{j} \frac{\partial}{\partial t_{j}}
    \\
    (\beta\circ\sigma)(L_{i}) & \,= \,
    s \frac{\partial}{\partial t_{2i+3}} +
    \frac14  \sum_{j=1}^{2i-1} \frac{\partial}{\partial t_{j}} \frac{\partial}{\partial t_{2i-j}}+
    \frac12 \sum_{j=1}^{\infty} j t_{j} \frac{\partial}{\partial t_{2i-j}}
        \end{aligned}
    $$
(where $j$, as usual, is odd) and thus:
    $$
    [(\beta\circ\sigma)(L_{-1}), (\beta\circ\sigma)(L_{1}) ]
    \; - \beta({[\sigma(L_{-1}), \sigma(L_{1})]})
    \,=\,
    -\frac18
    $$
which implies that we have a map of Lie algebras defined by:
    \begin{equation}\label{eq:rhoWK}
    \rho(L_i)\,:=\, (\beta\circ\sigma)(L_i)+ \frac1{16}\delta_{i,0}
    \end{equation}

Conversely, let us start with $\rho$ as in the second set of the statement. The assumptions yield the following expression:
	$$
	\rho(L_{-1})\, = \,   b_{-1}^{0,1} \frac{\partial}{\partial t_{1}} +  a  t_1^2   + b_{-1}^{i+2,i}  t_{i+2} \frac{\partial}{\partial t_{i}}
	$$
with $ a, b_{-1}^{i+2,i}  \neq 0$. Considering the action of $\prod_{i \text{ odd}}\C^*$ by conjugation, one finds $\lambda = \{\lambda_i \in\C^* \vert i\text{ odd}\}$ and $s\in \C$ such that:
% $\rho^{\lambda} \,:=\, \lambda \circ  \rho  \circ \lambda^{-1}$ satisfies:
	$$
	\rho^{\lambda}(L_{-1}) \, =\, \lambda \circ \rho(L_{-1}) \circ \lambda^{-1}
	\,=\,
	s \frac{\partial}{\partial t_{1}} +  \frac14 t_1^2   + (\frac{i+2}2) t_{i+2} \frac{\partial}{\partial t_{i}}
	$$

Lemma \ref{lem:sigma} and the previous discussion show that $\rho^{\lambda}$ is the representation associated to the map  $\sigma : {\mathcal W}_>\to \operatorname{Diff}^1(\C((z)))$ defined by:
	$$
	\sigma(L_i)= \frac12 z^{-2i}(z\partial_z + \frac{1-2i}{2})+ s z^{-2i-3}
	\qquad \forall i\geq -1
	$$
\end{proof}

\begin{rem}\label{rem:VirtoKdV}
The statement can be generalized. On the one hand, we may consider the conjugation of $\sigma$ by an operator of the type  $\exp(r(z))$ while, on the other hand, we replace $\rho$ by its conjugate by $\exp(\beta(r(z)))$. For instance, for $r(z)\in \C[[z^2]]$, one has that $\beta(r(z))$ is a linear function on $t_1,t_3,\ldots$. Thus, the first representation is:
 	$$
	 \sigma(L_i)= \frac12 z^{-2i}\big(z(- r(z)+\partial_z) + \frac{1-2i}{2}\big)+ s z^{-2i-3}  
	     $$
while  $\rho$ is as in the statement up to a linear function on $t_i$'s. 
\end{rem}

\begin{rem}
It is worth noticing that the Virasoro operators studied by Witten (\cite{Witten}) correspond to the case $s=-\frac12$, $r(z)=0$. Kac-Schwarz (\cite{KacSchwarz}), using the fact the these operators come from a representation in $\operatorname{Diff}^1(\C((z)))$, proved that there is a point in the Sato Grassmannian whose $\tau$-function is a solution of these equations and, hence, is a solution of KdV hierarchy too.  A study of common solutions of Virasoro-like constraints and KdV has been carried out in \cite{Plaza-AlgSol}. 
\end{rem}

\begin{lem}\label{lem:uniquenesTauVirasoro}
Let $\rho$ be as in Lemma~\ref{lem:Diff1scaleW} and let $\tau(t)\in \Vkdv=\C[[t_1,t_3,\ldots]]$. Then, the Virasoro constraints:
	$$
	\rho(L_k)(\tau(t)) \,=\, 0\qquad k\geq -1
	$$
with the initial condition $\tau(0)=1$ admits no solution for $s=0$ and at most one solution for $s\neq 0$. 
\end{lem}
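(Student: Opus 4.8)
The plan is to run an explicit induction on the standard weight grading of $\Vkdv=\C[[t_1,t_3,\ldots]]$, in which $\deg t_j=j$ and every graded piece $(\Vkdv)_d$ is finite dimensional, using the concrete formulae for the operators $\rho(L_k)$ produced in the proof of Lemma~\ref{lem:Diff1scaleW} together with the normalization~\eqref{eq:rhoWK}. I would write $\tau=\sum_{d\geq 0}\tau_d$ with $\tau_d\in(\Vkdv)_d$, so that the initial condition reads $\tau_0=1$. The structural fact driving everything is that, as $k$ ranges over $k\geq -1$, the exponent $2k+3$ ranges over \emph{all} positive odd integers; hence the family of constraints $\rho(L_k)\tau=0$ controls $\partial_{t_m}\tau$ for every odd $m$, and the scalar $s$ sits exactly in front of these top derivatives.

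For $s=0$ the argument is immediate: the operator $\rho(L_0)$ collapses to $\tfrac12 D+\tfrac1{16}$, where $D=\sum_{j}j\,t_j\partial_{t_j}$ is the degree operator, so $\rho(L_0)\tau=0$ forces $\bigl(\tfrac d2+\tfrac1{16}\bigr)\tau_d=0$ for all $d$. Since $\tfrac d2+\tfrac1{16}>0$ for every $d\geq 0$, this gives $\tau_d=0$ for all $d$, contradicting $\tau_0=1$; hence no solution exists, which settles the first assertion.

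For $s\neq 0$ I would prove uniqueness by showing, by induction on $d$, that $\tau_d$ is forced by $\tau_0,\ldots,\tau_{d-1}$. Fixing $d$ and, for each $k\geq -1$, isolating the homogeneous component of degree $d-(2k+3)$ of the identity $\rho(L_k)\tau=0$: since $\partial_{t_{2k+3}}$ lowers degree by exactly $2k+3$, the leading term contributes $s\,\partial_{t_{2k+3}}\tau_d$, while a short degree count shows every other term acts only on components $\tau_{d'}$ with $d'<d$. Indeed the quadratic pieces $\partial_{t_j}\partial_{t_{2k-j}}$ lower degree by $2k$ and so hit $\tau_{d-3}$, the transport pieces $t_j\partial_{t_{2k-j}}$ shift degree by $2j-2k$ and so hit $\tau_{d-3-2j}$, and the corresponding terms of $\rho(L_{-1})$ and $\rho(L_0)$ land on $\tau_{d-3}$. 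Dividing by $s$ therefore determines $\partial_{t_{2k+3}}\tau_d$ from data already known by induction. Letting $k$ vary recovers $\partial_{t_m}\tau_d$ for every odd $m$, and Euler's identity $\tau_d=\tfrac1d\sum_{m}m\,t_m\,\partial_{t_m}\tau_d$ (valid for $d\geq 1$) reconstructs $\tau_d$ uniquely; together with $\tau_0=1$ this pins down $\tau$ completely, giving at most one solution.

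The only point demanding care is the degree bookkeeping in the inductive step, namely verifying that after passing to the degree-$(d-2k-3)$ slice of each constraint every surviving term genuinely lands in a strictly lower graded piece; this is precisely where the rigid shape of the type-$k$ operators is used, and where the hypotheses $s\neq 0$ (to invert the leading derivative) and $\tfrac d2+\tfrac1{16}\neq 0$ enter. I expect no deeper obstacle, because the statement asserts only uniqueness: existence is not claimed, so no compatibility among the distinct constraints $\rho(L_k)\tau=0$ has to be checked, and the argument never needs the full Lie-algebra relations beyond the explicit expressions already recorded.
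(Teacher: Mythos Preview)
Your argument is correct and follows the same inductive-on-degree strategy as the paper, but the execution differs in two respects worth recording. First, the paper passes to $F=\log\tau$ and tracks the coefficients $f_{\mathbf n}$ of $F$; this turns each constraint into a nonlinear recursion (products of $f$'s appear, e.g.\ the $f_1^2$ term in the $k=1$, $i=0$ row). You instead keep $\tau$ itself, so the operators stay linear and the bookkeeping reduces to the single observation that $s\,\partial_{t_{2k+3}}$ is the unique piece of $\rho(L_k)$ of degree $-(2k+3)$, followed by Euler's identity to rebuild $\tau_d$. This is cleaner and avoids the exponential entirely. Second, your $s=0$ argument using only $\rho(L_0)=\tfrac12 D+\tfrac1{16}$ is more direct than the paper's, which reads the contradiction off its low-degree table.

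One minor correction: the ``transport'' pieces in $\rho(L_k)$ for $k\geq 1$ are $t_j\,\partial_{t_{2k+j}}$, not $t_j\,\partial_{t_{2k-j}}$ (the latter is a typo in the displayed formula for $(\beta\circ\sigma)(L_i)$ in the paper); they all shift degree by exactly $-2k$, so they hit $\tau_{d-3}$, not $\tau_{d-3-2j}$. This does not affect your argument, since in either reading the relevant $d'$ is strictly below $d$, but you should state the degree shift correctly.
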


\begin{proof}
Since $\tau(0)=1$, let us consider the problem in terms of a formal function $F(t)\in \Vkdv= \C[[t_1,t_3,\ldots]]$ with $F(0)=0$ and $\tau(t)=\exp(F(t))$. The function $F(t)$ has a series expansion:
    \begin{equation}\label{eq:F=generating}
    F(t)\,=\, \sum_{\bf n} f_{\bf n} {\bf t}^{\bf n}
        \end{equation}
where ${\bf n}:=\{n_1,n_3,\ldots\}$ is a sequence of non-negative integers such that $n_i=0$ for all $i\gg 0$, $f_{\bf n}\in \C$ and ${\bf t}^{\bf n}:=\prod_{i\geq 1} t_i^{n_i}$. Further, the topology of $\Vkdv=\C[[t_1,t_3,\ldots]]$ comes from the definition $\deg(t_i)=i$. In particular, the degree of ${\bf t}^{\bf n}$ is given by $\vert {\bf n}\vert:= \sum_{i\geq 0} i n_i$.

For the sake of brevity, let us denote by  $f_{n_1 n_3\ldots n_k} = f_{\bf n}$ for ${\bf n}=\{n_1,n_3,\ldots\}$ with $n_k\neq 0$ and $n_i=0$ for all $i>k$ and we set $f_0=F(0)=0$. As a brief summary, let us write down the monomials and their coefficients up to degree $5$:
    {\small
    $$
    \begin{array}{ccccccc}
    \text{degree} & 0 & 1 & 2 & 3 & 4 & 5
    \\
    \text{monomials} & 1 & t_1 & t_1^2 & t_1^3 , t_3 & t_1^4 , t_1t_3 &
    t_1^5 , t_1^2 t_3 , t_5
    \\
    \text{coefficient} & f_0 & f_1 & f_2 & f_3, f_{01} & f_4, f_{11} &
    f_5, f_{21}, f_{001}
    \end{array}
    $$}

After rescaling $t_i$'s and conjugation by an exponential, if needed, we may assume that $\rho$ is given by \eqref{eq:rhoWK}. The hypothesis $\rho(L_k)(\tau(t)) =0$ is equivalent to the vanishing of the corresponding homogeneous parts of degree $i$ for $i=0,1,2,\ldots$. An explicit computation for low values of $k$ and $i$ yields:
	    $$
    \begin{array}{ccc}
    k &\quad  i \quad& \text{part of degree $i$ in }\rho(L_k)(\tau(t))
    \\
    -1 & 0 & s f_1 
    \\
    -1 & 1 & 2s f_2 t_1 
    \\
    -1 & 2 & 3s f_3 t_1^2 + \frac12 t_1^2 
    \\
    -1 & 3 & s\big(4 f_4 t_1^3 +f_{13} t_3\big) + t_3 f_1  
    \\
    0 & 0 & s f_{01}+\frac1{16} 
    \\
    0 & 1 &  s f_{11} t_1 + t_1 f_1  
    \\
    0 & 2 & s f_{21} t_1^2 +2 f_2 t_1^2  
    \\
    1 & 0 & s f_{001} + \frac12 f_2 +\frac14 f_1^2  
    \\
    1 & 1 & s f_{101} + \frac32 f_3 t_1 + f_{11} t_1  
    \end{array}
    $$
Thus, it is clear that if a solution $F$ does exist, then $s\neq 0$. In this case,  the vanishing of the above polynomials implies that  $f_1=0$, $f_2=0$, $f_3=-\frac1{3! s}$, $f_{01}=-\frac1{16 s}$, $f_{11}= 0$, $f_4=0$, $f_{11}= 0$, etc.~. Writing down the general expression for the homogeneous part of degree $i$ of $\rho(L_k)(\tau(t))$, one observes that it allows us to determine $f_{\bf n}$ with $\vert n\vert =i$ and $n_k\neq 0$ in terms of $f_{\bf n}$ with $\vert n\vert \leq i-2$. Thus, if a solution $F$ exists, the coefficients $f_{\bf n}$ can be recursively determined. % Since this system of equations could be inconsistent, there is one solution at most. 

\end{proof}

%\todo[inline]{ Otras referencias en las que se menciona unicidad de la solución (y relación con jerarquías):
%- Dalmazi, Verbaarschot, hep-th/0101035v2, Virasoro Constraints and Flavor-Topology duality in QCD}

\begin{thm}\label{thm:solutionWscaleKdV}
Let $\rho \in \homlie({\mathcal W}_>,\End(\C[[t_1,t_3,\ldots]]))$ be such that $\rho(L_k)$  is of type $k$ for $k\geq -1$ and that all coefficients of $\rho(L_{-1})$ are non zero. 

Then, there exists a unique $\tau(t)\in\C[[t_1,t_3,\ldots]]$, with $\tau(0)=1$, such that:
	$$
	\rho(L_k)(\tau(t))\,=\,0\qquad k\geq -1
	$$
Further, the solution $\tau(t)$ is a $\tau$-function of the scaled KdV hierarchy.
\end{thm}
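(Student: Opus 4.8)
The plan is to transport the problem, via the scaling action, to the normal form \eqref{eq:rhoWK}, and then to realize the solution as the $\tau$-function of a point of the Sato Grassmannian fixed by the associated $\sigma$-action.

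First I would invoke Lemma~\ref{lem:Diff1scaleW}. Since $\rho(L_k)$ is of type $k$ for every $k\geq -1$ and, by hypothesis, every coefficient of $\rho(L_{-1})$ is nonzero --- in particular its quadratic coefficient does not vanish --- the representation $\rho$ lies in the second set of that Lemma. Hence there are a scaling $\lambda$ and a scalar $s\in\C$ with $\rho^{\lambda}=\rho_0$, where $\rho_0$ is the normal form \eqref{eq:rhoWK} attached to $\sigma(L_i)=\tfrac12 z^{-2i}\bigl(z\partial_z+\tfrac{1-2i}{2}\bigr)+s\,z^{-2i-3}$. Because the coefficient of $\partial/\partial t_1$ in $\rho_0(L_{-1})$ equals $s$ and it is the image of the nonzero coefficient $b_{-1}^{0,1}$ under $\lambda$, I conclude $s\neq 0$; this is exactly the regime in which Lemma~\ref{lem:uniquenesTauVirasoro} does not rule out a solution.

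Uniqueness then follows formally. By Lemma~\ref{lem:uniquenesTauVirasoro} applied to $\rho_0$ with $s\neq 0$, there is at most one $\tau_0\in\Vkdv$ with $\tau_0(0)=1$ and $\rho_0(L_k)\tau_0=0$ for all $k\geq -1$. Since $\rho^{\lambda}=\lambda\circ\rho\circ\lambda^{-1}$, a series $\tau$ satisfies $\rho(L_k)\tau=0$ if and only if $\lambda\cdot\tau$ satisfies $\rho_0(L_k)(\lambda\cdot\tau)=0$, and $\tau\mapsto\lambda\cdot\tau$ is a bijection of $\Vkdv$ preserving the value at the origin. Thus the solution for $\rho$, if it exists, is unique.

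The substantive point is existence, and here I would follow \cite{KacSchwarz, Plaza-AlgSol}. The homomorphism $\sigma$ makes ${\mathcal W}_>$ act on $\C((z))$ through first-order differential operators, hence on the Sato Grassmannian compatibly with the determinant bundle ${\mathbb D}$. As ${\mathcal W}_>$ is generated by $L_{-1}$ and $L_2$, a subspace $U$ is stable under all $\sigma(L_k)$ as soon as $\sigma(L_{-1})U\subseteq U$ and $\sigma(L_2)U\subseteq U$; the construction of \cite{Plaza-AlgSol} produces such a $U$ in the big cell, so that $\pi_U$ is an isomorphism and the associated KP $\tau$-function $\tau_0:=\tau_U$ is normalized by $\tau_0(0)=1$. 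Carrying the stability of $U$ through the boson--fermion correspondence \eqref{eq:betaDiffEnd} turns it into the assertion that $\tau_U$ is annihilated by the operators $\beta(\sigma(L_k))$ up to the anomaly, which is absorbed by the constant $\tfrac1{16}$ added in \eqref{eq:rhoWK}; this yields $\rho_0(L_k)\tau_0=0$. I expect this translation to be the main obstacle: it is precisely the step where the central extension of the operator algebra must be pinned down so that the surviving constant matches \eqref{eq:rhoWK} exactly. Finally, the point $U$ is moreover stable under multiplication by $z^{-2}$ --- the $2$-reduction underlying the passage to odd times in Lemmas~\ref{lem:DinDiffistypei} and \ref{lem:sigma} --- so $\tau_0$ is independent of the even times, lies in $\Vkdv$, and is therefore a $\tau$-function of the KdV hierarchy. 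Undoing the scaling, $\tau:=\lambda^{-1}\cdot\tau_0$ is the unique series with $\tau(0)=1$ solving $\rho(L_k)\tau=0$ for $k\geq -1$, and since $\tau(\lambda^{-1}t)=\tau_0(t)$ is a KdV $\tau$-function, $\tau$ is by definition a $\tau$-function of the $\lambda$-scaled KdV hierarchy, as claimed.
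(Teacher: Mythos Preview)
Your argument is correct and follows essentially the same route as the paper: invoke Lemma~\ref{lem:Diff1scaleW} to rescale $\rho$ into the normal form $\rho_0=\beta_*(\sigma)$, cite \cite{KacSchwarz,Plaza-AlgSol} for the existence of a KdV $\tau$-function $\tau_0$ annihilated by $\rho_0(L_k)$, undo the scaling, and conclude uniqueness from Lemma~\ref{lem:uniquenesTauVirasoro}. You even make explicit two points the paper leaves implicit --- that the hypothesis $b_{-1}^{0,1}\neq 0$ forces $s\neq 0$ (needed both for existence and for Lemma~\ref{lem:uniquenesTauVirasoro} not to obstruct), and the $z^{-2}$-stability underlying the KdV reduction --- so your write-up is, if anything, more complete than the paper's own proof, which simply cites Theorem~3.12 of \cite{Plaza-AlgSol} as a black box.
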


\begin{proof}
Lemma~\ref{lem:Diff1scaleW} implies that there is $\lambda$ and $\sigma:{\mathcal W}_>\to \operatorname{Diff}^1(\C((z)))$ such that $\rho^{\lambda}=\beta_*(\sigma)$.  Recalling  Theorem~3.12  of~\cite{Plaza-AlgSol}, one knows that there is a function $\tau_0(t)$ which satisfies that $\rho^{\lambda}(L_n)(\tau_0(t))=0$ and that it is a $\tau$-function of the KdV hierarchy. Then, $\tau(t):=\tau_0(\lambda t)$ fulfills the requirements. Since Lemma~\ref{lem:uniquenesTauVirasoro} implies the uniqueness of the solution, the conclusion follows. 

\end{proof}

\begin{rem}
Let us make two comments on the solutions. First, an instance of the notion of scaled KdV appears already in Kontsevich's Theorem when it is claimed that the exponential of the generating function in variables $T_{2i+1}:=t_i/(2i+1)!!$ is a $\tau$-function for the KdV hierarchy (\cite[Theorem 1.2]{Kon}). On the other hand, although the  dilation shift $\bar t_{i}\mapsto \bar t_{i}-\delta_{i,0}$ transforms the operators $\rho(L_k)$, it should be noted that it does not induce an automorphism of the algebra $\C[[\bar t_0,\bar t_1,\ldots]]$.
\end{rem}

%
%\todo[inline]{Debido a la unicidad y los resultados de la bibliografía, dicha function {scaled tau} se construye a partir de una curva. es la curva espectral. }

%%%%%%%%%%%%%%%%%%%%%
\subsection{On the solutions for the $n$-dimensional situation}

Let us now focus in the $n$-dimensional situation. That is, we aim at studying the interplay between Virasoro representations and multicomponent KP hierarchy. Special attention will be paid at their common solutions. 

Recall that $\Vkdv(A)$ is the subalgebra of $\C[[t_1,t_3,\ldots]]\widehat\otimes_{\C} S^\bullet A$ generated by $t_i\otimes a$. Then, $S\in\Gl(A)$ acts on it by the automorphism of algebras $t_i\otimes a\mapsto t_i\otimes S(a)$. 

\begin{thm}\label{thm:solutionproductKdV}
Let $\rho:{\mathcal W}_>\to \uh[(A)]$ be as in \S\ref{subsec:baby}.
% where a basis of $A$, $\{a_\alpha\}$ has been chosen. 

There exist $S\in \Gl(A)$ and functions $\tau_\alpha(t_{1,\alpha},t_{3,\alpha},\ldots)\in \C[[t_{1,\alpha},t_{3,\alpha},\ldots]]$ such that:
	\begin{equation}\label{eq:scaleequivmultiKP}
	\hat\rho(L_k)\big(S( \prod_\alpha \tau_\alpha( t_\alpha))\big)\,=\, 0
	\end{equation}

Further, $\tau_\alpha(t_{1,\alpha},t_{3,\alpha},\ldots)$ are $\tau$-functions of the scaled KdV hierarchy.
 \end{thm}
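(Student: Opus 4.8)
The plan is to combine the structural decomposition of $\rho$ established in \S\ref{subsec:baby} with the one-dimensional solution theory of \S\ref{subsec:solutions1dim}, via the general tensor-product principle of Theorem~\ref{thm:solutionrhoi}. First I would invoke Theorem~\ref{thm:Virasorodecomp}: since $\hat\rho$ encodes the Virasoro constraints of a variety with trivial odd cohomology and vanishing first Chern class, there exists $S\in\Gl(A)$ such that $\rho^S$ splits as the sum $\rho_1\otimes1\otimes\cdots+\cdots+1\otimes\cdots\otimes\rho_n$ of $n$ one-dimensional representations $\rho_\alpha:{\mathcal W}_>\to\uh[(\C)]$. Passing to the induced action on $\Vkdv(A)\simeq\bigotimes_\alpha\C[[t_{1,\alpha},t_{3,\alpha},\ldots]]$, each factor $\hat\rho_\alpha$ falls exactly under the hypotheses of the one-dimensional analysis carried out in Example~\ref{exam:dim1} and codified in Lemma~\ref{lem:Diff1scaleW}.

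Next I would treat each factor separately. By Theorem~\ref{thm:solutionWscaleKdV}, applied to the representation $\hat\rho_\alpha\in\homlie({\mathcal W}_>,\End(\C[[t_{1,\alpha},t_{3,\alpha},\ldots]]))$ --- whose operators are of the correct type $k$ with non-vanishing coefficients of $\rho_\alpha(L_{-1})$, inherited from the choices in \S\ref{subsec:baby} --- there exists a unique $\tau_\alpha(t_\alpha)\in\C[[t_{1,\alpha},t_{3,\alpha},\ldots]]$ with $\tau_\alpha(0)=1$ satisfying $\hat\rho_\alpha(L_k)(\tau_\alpha)=0$ for all $k\geq-1$, and moreover $\tau_\alpha$ is a $\tau$-function of the scaled KdV hierarchy. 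This is the content of the ``Further'' clause of the statement, so it will be settled componentwise at this stage.

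It then remains to assemble the product. The functions $\tau_\alpha$, viewed inside the tensor product, furnish a rank-one solution $\prod_\alpha\tau_\alpha=\bigotimes_\alpha\tau_\alpha$ of the decomposed representation $\widehat{\rho^S}=\sum_\alpha 1\otimes\cdots\otimes\hat\rho_\alpha\otimes\cdots\otimes1$. Here I would apply Theorem~\ref{thm:solutionrhoi} in the (easy) converse direction: because each $\hat\rho_\alpha(L_k)(\tau_\alpha)=0$, the single-summand computation displayed in that proof gives $\widehat{\rho^S}(L_k)(\prod_\alpha\tau_\alpha)=0$ for all $k\geq-1$, the tensor having only one term in each factor so that the linear-independence hypothesis is vacuous. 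Finally, I would undo the change of basis: since $\rho^S$ was obtained from $\hat\rho$ by transporting along $S\in\Gl(A)$, and $S$ acts on $\Vkdv(A)$ by the algebra automorphism $t_i\otimes a\mapsto t_i\otimes S(a)$, translating the identity back yields exactly
	$$
	\hat\rho(L_k)\big(S(\textstyle\prod_\alpha\tau_\alpha(t_\alpha))\big)\,=\,0,
	$$
which is \eqref{eq:scaleequivmultiKP}.

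The conceptual work is essentially done by the cited theorems, so there is no single hard calculation; the main point requiring care is the bookkeeping of the conjugation by $S$. One must verify that applying $S^{-1}$ intertwines $\hat\rho$ with $\widehat{\rho^S}$ as operators on $\Vkdv(A)$, i.e. that $\hat\rho(L_k)\circ S=S\circ\widehat{\rho^S}(L_k)$, so that a solution of $\widehat{\rho^S}(L_k)\psi=0$ produces a solution $S\psi$ of $\hat\rho(L_k)(\cdot)=0$. This is precisely the compatibility built into the definition of $\rho^S$, but it is the step where the action of $\Gl(A)$ on the Heisenberg algebra, on $\Vkdv(A)$, and on the quantization map $\widehat{\,}$ must be seen to agree; once this is checked, the theorem follows.
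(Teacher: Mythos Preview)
Your proposal is correct and follows essentially the same route as the paper's own proof: invoke Theorem~\ref{thm:Virasorodecomp} to obtain $S$ and the splitting $\rho^S=\sum_\alpha\rho_\alpha$, apply Theorem~\ref{thm:solutionWscaleKdV} to each one-dimensional factor to produce the scaled-KdV $\tau_\alpha$, and then use the easy direction of Theorem~\ref{thm:solutionrhoi} together with the intertwining $\hat\rho(L_k)\circ S=S\circ\widehat{\rho^S}(L_k)$ to conclude. Your treatment of the $S$-conjugation is in fact more explicit than the paper's, which compresses that step into the single observation that \eqref{eq:scaleequivmultiKP} holds iff $\widehat{\rho^S}(L_k)(\prod_\alpha\tau_\alpha)=0$.
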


\begin{proof}
Theorem~\ref{thm:Virasorodecomp} shows that there is $S\in\Gl(A)$ such that $\rho^S$ decomposes as the tensor product of $n$ $1$-dimensional Lie algebra representations of ${\mathcal W}_>$. More precisely,  if $\{a_\alpha\}$ is the chosen basis for $A$, then $\{S(a_\alpha)\}$ is a orthogonal basis for $\eta$. Consequently, there are $\rho_\alpha: {\mathcal W}_>\to \uh[(<S(a_\alpha)>)]$ such that \eqref{eq:rho=sumrhoalpha} holds. 

Now, apply the results of \S\ref{subsec:solutions1dim} on the $1$-dimensional case. Indeed, since $\eta$ is non-degenerated and $\{S(a_\alpha)\}$ is a orthogonal basis,  from Theorem~\ref{thm:solutionWscaleKdV} one obtains functions $\tau_\alpha(t_\alpha)$, such that $\tau_\alpha(0)=1$,  $\rho_\alpha(L_k)(\tau_\alpha)=0$ for all $\alpha,k$ and they are  $\tau$-functions for the scaled KdV hierarchy. 

Observe that \eqref{eq:scaleequivmultiKP} holds if and only if $\hat\rho^S(L_k)( \prod_\alpha \tau_\alpha( t_\alpha))$ vanishes. Applying the converse of Theorem~\ref{thm:solutionrhoi} one concludes. 
\end{proof}

%Introduce functions $\tilde\tau_{\alpha}$ by the conditions $\exp(\tilde\tau_\alpha(t_\alpha))=\tau_\alpha(t_\alpha)$ and $\tilde\tau_{\alpha}(0)=0$. Observe that $\hat\rho(L_k)\big(S( \prod_\alpha \tau_\alpha( t_\alpha))\big)$ vanishes if and only if 
%	$$
%	{\exp(-\sum_\alpha \tilde\tau_\alpha(t_\alpha))} S^{-1} \hat\rho(L_k)\big(S( \prod_\alpha \tau_\alpha( t_\alpha))\big)
%	$$
%does. But a straightforward computation shows that this is indeed the case:
%	\begin{equation}\label{eq:rhoSrhoA}
%	\frac{ \rho^S(L_k)(\exp(\sum_\alpha \tilde\tau_\alpha(t_\alpha)))} {\exp(\sum_\alpha \tilde\tau_\alpha(t_\alpha))}
%	\, =\, 
%	\sum_\alpha 
%	\frac{ \rho_\alpha(L_k)(\exp(\tilde\tau_\alpha(t_\alpha)))} {\exp(\tilde\tau_\alpha(t_\alpha))}
%	\,=\, 0
%	\end{equation}

\begin{rem}
The previous Theorem means that, assuming the uniqueness of the solution (\cite[Theorem 3.10.20]{DubrovinZ}), the solution of the Virasoro constraints has to be of the above form; that is, an operator acting on a product of Witten-Kontsevich $\tau$-functions. Thus, it agrees with the results of Givental (\cite{Givental}) for the total descendent potential. It would be interesting to relate both expressions explicitly (see also \cite{FvLS, GiventalAn,Lee}). Alternatively, one could combine Teleman's classification of semi simple cohomological field theories (\cite{Teleman}) with Givental's results to deduce that this is the right expression for the solution. Nevertheless our result can be applied on other frameworks, as it will mentioned in \S\ref{subsec:final}).
\end{rem}

%\todo[inline]{  ejemplo de resolución recursiva, Mironov-Morozov, section 3 of ¨Virasoro constraints for Kontsevich-Hurwtiz partition function , jhep 2009
%
% Hay casos en que se sabe que la solución es única. Por ejemplo, "semisimple Frobenius manifold" en Thm 3.10.20. Dubrovin-Zhang, Normal Forms. 
%
% - Dubrovin Zhang , Virasoro Symmetries of the Extended Toda, Comm Math Phys  2004 (menciona el previo de Dubrovin-Zhang y a Teleman). Thm 5.1.: the partition function of the CP1 topological sigma model is uniquely determined by the Virasoro Conjecture Equations. 
%
% Dubrovin, GW invariants and integrable hierarchies of topological type. arxiv 1312.0799v2
%}

\begin{cor}
Let $\rho$ be as in the Theorem~\ref{thm:solutionproductKdV}. 

If $S, \tau_\alpha$ satisfy \eqref{eq:scaleequivmultiKP}, then $\rho^S=\rho_1+\ldots+\rho_n$ and $\hat\rho_\alpha(L_k)(\tau_\alpha)=0$. 

The matrix $S$ is unique up to an ortogonal matrix. 
\end{cor}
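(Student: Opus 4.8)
The plan is to derive everything from the hypothesis $\hat\rho(L_k)\big(S(\prod_\alpha\tau_\alpha)\big)=0$, which, exactly as in the proof of Theorem~\ref{thm:solutionproductKdV}, is equivalent to
$$
\hat\rho^S(L_k)\Big(\prod_\alpha\tau_\alpha\Big)=0\qquad(k\geq-1).
$$
I assume the $\tau_\alpha$ are non-zero, so that each one, being a single vector, forms a linearly independent family.

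First I would prove that $S$ must diagonalise $\eta$. Writing $\tilde\eta=(S^{-1})^T\eta S^{-1}$ for the pairing attached to $\rho^S$, the operator $\hat\rho^S(L_{-1})$ contains the purely multiplicative quadratic term $t_1\,\tilde\eta\,t_1^T=\sum_{\alpha,\beta}\tilde\eta_{\alpha\beta}\,t_{1,\alpha}t_{1,\beta}$, whereas every remaining term (the single first-order term in $t_1$ and the operators $(i+2)\,t_{i+2}\partial_{t_i}$) acts within a single index block $\alpha$. Dividing the identity $\hat\rho^S(L_{-1})(\prod_\alpha\tau_\alpha)=0$ by the unit $\prod_\alpha\tau_\alpha$ and isolating the monomial $t_{1,\alpha}t_{1,\beta}$ with $\alpha\neq\beta$---which no single-block term can produce---forces $\tilde\eta_{\alpha\beta}=0$. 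Hence $\{S(a_\alpha)\}$ is an orthogonal basis for $\eta$. The same bookkeeping applied to $\hat\rho^S(L_0)$ and $\hat\rho^S(L_1)$, whose index-mixing parts are governed by $b_0^{1,1}$ and $c_1^{1,1}$, shows that these forms are block-diagonal as well; thus the splitting of $A$ into the lines $\langle S(a_\alpha)\rangle$ is compatible with $\eta$, $a$, $b_0^{1,1}$ and $c_1^{1,1}$, and Theorem~\ref{thm:rhodecomp} yields $\rho^S=\rho_1+\dots+\rho_n$ with $\rho_\alpha:{\mathcal W}_>\to\uh[(\langle S(a_\alpha)\rangle)]$.

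Granting this decomposition, the second assertion is immediate. Since quantisation is compatible with the tensor factorisation $\Vkdv(A)\cong\bigotimes_\alpha\Vkdv(\langle S(a_\alpha)\rangle)$, we have $\hat\rho^S=\hat\rho_1\otimes1+\ldots+1\otimes\hat\rho_n$. Grouping the factors as $\rho_1\otimes(\rho_2+\dots+\rho_n)$, I would apply the direct implication of Theorem~\ref{thm:solutionrhoi} to the pure tensor $\tau_1\otimes(\tau_2\cdots\tau_n)$ (here $r=1$, so linear independence is automatic) to obtain $\hat\rho_1(L_k)(\tau_1)=0$ and $(\hat\rho_2+\dots+\hat\rho_n)(L_k)(\tau_2\cdots\tau_n)=0$ for all $k$; iterating on the remaining factors gives $\hat\rho_\alpha(L_k)(\tau_\alpha)=0$ for every $\alpha$ and $k$.

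For the uniqueness statement, suppose $S_1$ and $S_2$ both satisfy \eqref{eq:scaleequivmultiKP}. By the first step each produces a basis orthogonal for $\eta$; after the harmless rescaling that normalises $\tilde\eta$ to the identity we may assume $S_j^T S_j=\eta$ for $j=1,2$. Then $O:=S_1S_2^{-1}$ satisfies $O^T O=S_2^{-T}(S_1^T S_1)S_2^{-1}=S_2^{-T}\eta S_2^{-1}=\Id$, so $O$ is orthogonal and $S_1=O S_2$, as claimed. The main obstacle is the first step: one must argue rigorously that the index-mixing coefficients of the forms cannot be cancelled in $\hat\rho^S(L_k)(\prod_\alpha\tau_\alpha)$ by the single-block contributions. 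This is transparent for the multiplicative quadratic term of $L_{-1}$, which pins down $\tilde\eta$ at once, but for $b_0^{1,1}$ and $c_1^{1,1}$ it requires a careful separation-of-variables analysis of the first-order cross terms, since there each cross term is a product of a factor in the $\alpha$-variables and a factor in the $\beta$-variables rather than a single monomial.
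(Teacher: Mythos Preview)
Your argument follows the same route as the paper: rewrite the hypothesis as $\hat\rho^{S}(L_k)(\prod_\alpha\tau_\alpha)=0$, use the case $k=-1$ to force $(S^{-1})^T\eta S^{-1}$ to be diagonal, invoke Theorem~\ref{thm:rhodecomp} to split $\rho^S$, and then apply Theorem~\ref{thm:solutionrhoi} to separate the $\tau_\alpha$. Your uniqueness paragraph is exactly the ``straightforward'' step the paper leaves implicit.

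The one place you diverge is your treatment of $b_0^{1,1}$ and $c_1^{1,1}$. You propose to extract their block-diagonality from a separation-of-variables analysis of $\hat\rho^{S}(L_0)$ and $\hat\rho^{S}(L_1)$, and you correctly flag this as the delicate point. The paper bypasses this entirely: since the Corollary assumes $\rho$ is the specific representation of \S\ref{subsec:baby}, one has $a=b_{-1}^{i+2,i}=\eta$, while $b_0^{1,1}$ and $c_1^{1,1}$ are built algebraically out of $\eta$ (and $\mu$); hence, as already observed in the proof of Theorem~\ref{thm:Virasorodecomp}, any $S$ that diagonalises $\eta$ automatically diagonalises all of them. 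So once your $L_{-1}$ step gives $\tilde\eta$ diagonal, the hypotheses of Theorem~\ref{thm:rhodecomp} are met for free, and the ``main obstacle'' you describe does not arise. Your more hands-on route via $L_0$, $L_1$ would be needed for a general $\rho$, but is superfluous under the standing assumptions of the Corollary.
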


\begin{proof}
If $S$ and $\tau_\alpha$ are such that \eqref{eq:scaleequivmultiKP} vanishes, then the following expression also vanishes:
	$$
	0\,=\, 
	{\exp(-\sum_\alpha \tilde\tau_\alpha(t_\alpha))} S^{-1} \hat\rho(L_k)\big(S( \prod_\alpha \tau_\alpha( t_\alpha))\big) 
	\,=\, 
	\frac{ \hat \rho^S(L_k)(\exp(\sum_\alpha \tilde\tau_\alpha(t_\alpha)))} {\exp(\sum_\alpha \tilde\tau_\alpha(t_\alpha))}
	$$
Recall that an operator $\rho^S(L_k)$ of type \eqref{eq:type} is the same as $\rho(L_k)$  where the matrix $a$  has been replaced by $(S^{-1})^T a S^{-1}$ (and, accordingly, $b_k$, $c_k$, etc.). Expanding the case $k=-1$ of the last identity, one obtains that $(S^{-1})^T \eta S^{-1}$ is diagonal. Then, Theorem~\ref{thm:rhodecomp}, implies that $\rho^S$ decomposes as a sum $\rho_1+\ldots+\rho_n$ and Theorem~\ref{thm:solutionrhoi} implies that $\hat\rho_\alpha(L_k)(\tau_\alpha(t_\alpha))=0$. 

It is straightforward that $S$ is unique up to an ortogonal matrix.
\end{proof}

%\todo[inline]{ 
%En varios artículos de Givental, se define el total descendent ancestor como un producto de tau de Witten-Kontsevich y unos operadores. Es una definición que se introduce en el contexto de semisimple Frobenius. Aquí probamos que necesariamente ha de ser así. 
%
%En Gi2 Thm 1 se dice "the goal descendent potential of the A singularity defined by the formula .... satisfies... (Hirota type) and therefore is transformed by (a scalar) into a tau function of the nKdV. 
%
%Habría que comparar la composición de $S\circ \lambda$ con los operadores introducidos por Givental.
%}

\begin{cor}
Let $\rho$ be as in the Theorem~\ref{thm:solutionproductKdV}.  If either $S$ is diagonal or $\tau_\alpha$ are $\tau$-functions of the same scaled hierarchy, then the solution is a $\tau$-function for the scaled multicomponent KP hierarchy. 
\end{cor}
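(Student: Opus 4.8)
The plan is to work on the Sato Grassmannian of $\C((z))^{\oplus n}$ and to exploit the fact that the multicomponent KP $\tau$-functions form the affine cone over it, which is preserved by the constant $\Gl_n$-action on the $\C^n$-factor. First I would recall the standard factorization principle: if $U_\alpha\subset\C((z))$ are charge-zero points of the single-component Sato Grassmannian with (scaled) KP $\tau$-functions $\sigma_\alpha(t_\alpha)$, then $U_1\oplus\dots\oplus U_n\subset\C((z))^{\oplus n}$ is a charge-zero point of the multicomponent Grassmannian whose $\tau$-function, under the $n$-component boson-fermion correspondence, is exactly the product $\prod_\alpha\sigma_\alpha(t_\alpha)$. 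Applied to the $\tau_\alpha$ furnished by Theorem~\ref{thm:solutionproductKdV} (each a scaled KdV, hence scaled KP, $\tau$-function, with $\tau_\alpha(0)=1$ so that charges vanish), this already shows that $\prod_\alpha\tau_\alpha(t_\alpha)$ is a scaled multicomponent KP $\tau$-function; the content of the corollary is that applying $S$ preserves this property under either hypothesis.

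Next I would identify the action of $S\in\Gl(A)$ on $\Vkdv(A)$, namely the algebra automorphism $t_{i,\alpha}\mapsto\sum_\beta S_{\beta\alpha}t_{i,\beta}$ determined by $S(a_\alpha)=\sum_\beta S_{\beta\alpha}a_\beta$, with the action on the Fock space induced by the constant $\Gl_n$-transformation $a_\alpha\mapsto S(a_\alpha)$ of the $\C^n$-factor of $\C((z))^{\oplus n}$. Since constant matrices lie in the restricted linear group of $\C((z))^{\oplus n}$ and preserve both the index and the determinant line bundle $\mathbb{D}$, this $\Gl_n$-action maps the Grassmannian to itself and hence sends multicomponent KP $\tau$-functions to multicomponent KP $\tau$-functions, the $\tau$-function locus being the affine cone over $\Gr$ on which the projective $\Gl_n$-action restricts. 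This is the single geometric input; the determinant cocycle and normalization only rescale the output and are harmless for the $\tau$-function property up to the condition $\tau(0)=1$.

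With these two facts the two cases are immediate. If $S$ is diagonal it does not mix the components and $S\big(\prod_\alpha\tau_\alpha\big)=\prod_\alpha\tau_\alpha(S_{\alpha\alpha}\,t_\alpha)$; each factor is again a scaled single-component KdV $\tau$-function (a scaling composed with a scaling), so the factorization principle exhibits the product as a scaled multicomponent KP $\tau$-function with component-dependent scaling. If instead all $\tau_\alpha$ are $\tau$-functions of the same scaled hierarchy $\lambda=(\lambda_i)$, set $\tilde\tau_\alpha:=\tau_\alpha(\lambda^{-1}t_\alpha)$, so that each $\tilde\tau_\alpha$ is an unscaled KP $\tau$-function and $\prod_\alpha\tilde\tau_\alpha$ is an unscaled multicomponent KP $\tau$-function. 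Letting $\Lambda$ denote the uniform scaling $t_{i,\alpha}\mapsto\lambda_i t_{i,\alpha}$, the key observation is that $\Lambda$ and $S$ commute, since $\Lambda$ rescales the index $i$ uniformly in $\alpha$ while $S$ mixes the index $\alpha$ without touching $i$; hence
$$
S\Big(\prod_\alpha\tau_\alpha\Big)=S\Lambda\Big(\prod_\alpha\tilde\tau_\alpha\Big)=\Lambda\,S\Big(\prod_\alpha\tilde\tau_\alpha\Big).
$$
By the previous paragraph $S\big(\prod_\alpha\tilde\tau_\alpha\big)$ is an unscaled multicomponent KP $\tau$-function, and applying $\Lambda$ displays $S\big(\prod_\alpha\tau_\alpha\big)$ as a $\lambda$-scaled one.

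The main obstacle, and the reason both hypotheses are needed, lies precisely in this commutation: for a mixing $S$ together with component-dependent scalings the operators $S$ and the total scaling fail to commute, so the $\Gl_n$-invariance of the Grassmannian can no longer be transported through the scaling and $S\big(\prod_\alpha\tau_\alpha\big)$ need not be a scaled multicomponent KP $\tau$-function. The remaining work is bookkeeping: making precise the identification of the automorphism $t_{i,\alpha}\mapsto\sum_\beta S_{\beta\alpha}t_{i,\beta}$ with the $\Gl_n$-action on $\C((z))^{\oplus n}$ compatible with the $n$-component boson-fermion correspondence, and checking that the charges remain zero so that no extra zero-mode factors intervene in the product.
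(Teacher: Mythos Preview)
Your proposal is correct and follows the same route as the paper: pass to the Sato Grassmannian of $\C((z))^{\oplus n}$, use the direct--sum embedding $\Gr(\C((z)))^n\hookrightarrow\Gr(\C((z))^{\oplus n})$ together with the constant $\Gl_n$--action, and then transport the conclusion back through the scaling. Your write--up is in fact more explicit than the paper's own proof, which only sketches the embedding and then asserts the conclusion ``in the two cases of the statement'' without spelling out the reason; your commutation argument $S\Lambda=\Lambda S$ (uniform $\Lambda$ with arbitrary $S$, or componentwise $\Lambda_\alpha$ with diagonal $S$) is precisely the point that makes those two hypotheses the natural ones, and it is worth keeping.
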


\begin{proof}
In particular, the product $S( \prod_\alpha \tau_\alpha( t_\alpha))$ is uniquely determined by $\rho$. Each function $\tau_\alpha( t_\alpha)$ satisfies the scaled KdV and, thus, there are $\lambda_\alpha:=(\lambda_{i,\alpha})\in\prod_{i\text{ odd}}\C^*$ such that $\tau_\alpha( \lambda_\alpha^{-1} t_\alpha)$ defines a point $U_\alpha\in \Gr(\C((z)))$. If $\rho$ is expressed w.r.t. a basis $\{a_1,\ldots , a_n\}$, then $S$ determines a second basis $\{S(a_1),\ldots, S(a_n)\}$ or, equivalently, an isomorphism  $\C\oplus\ldots \oplus\C\simeq A$. This isomorphism induces:
	{\small $$
	\Gr(\C((z)))\times \ldots\times \Gr(\C((z))) \hookrightarrow 
	\Gr(\C((z))\oplus\ldots\oplus\C((z))) 
	\simeq \Gr(A\otimes \C((z)))
	$$}
Since $\tau$-function of the image of $(U_1,\ldots, U_n)$, which is $U_1\oplus\ldots\oplus U_n$, is given by $\prod_\alpha \tau_\alpha(\lambda_\alpha^{-1} t_\alpha)$ it follows that $S \prod_\alpha \tau_\alpha(t_\alpha)$ is a $\tau$-function of the scaled multicomponent KP in the two cases of the statement. % Since $\Gl(A)$ acts on $ \Gr(A\otimes \C((z)))$, $S$ transforms $\tau$-functions on $\tau$-functions and the proof is finished.
\end{proof}

%\begin{rem}
%For a semisimple Frobenius manifold the solution is unique (\cite[Theorem 3.10.20]{DubrovinZ2}) and, thus, our expression coincides with the given by Givental  (\cite{Givental}). It would be very interesting to relate explicitly both expressions, ours combines re-scaling and the linear group while Givental's involves operators and loop groups (see also \cite{FvLS, GiventalAn,Lee}).
%\end{rem}

\begin{rem}
Recalling Remark~\ref{rem:VirtoKdV}, we observe that Theorems~\ref{thm:solutionWscaleKdV} and~\ref{thm:solutionproductKdV} could be weaken and stated for representations satisfying the hypothesis up to a linear function on $t_i$'s.
\end{rem}

%%%%%%%%%%%%
\subsection{Final Comments}\label{subsec:final}

Let us finish with some brief comments. From a general perspective, we hope that our methods shed some light on the explicit expressions of the Virasoro operators and of the relevant integrable hierarchies that appear in   the Virasoro conjecture.  Furthermore, they can also be applied to many instances of representations of ${\mathcal W}_>$ such as recursion relations, Hurwitz numbers, knot theory, etc.~. 

As an illustration, let us point out the results of \cite{AmbjornChekhov,KazarianZograf} on Hurwitz numbers. In both cases, the authors study the generating functions of the number of coverings of ${\mathbb P}_1\setminus\{0,1,\infty\}$ with some properties. It is shown that these functions satisfy Virasoro constraints, KP hierarchy and topological recursion (of the Eynard-Orantin type \cite{EO}). It is remarkable that the Virasoro constraints are explicitly expressed as differential operators of the form considered in \S\ref{sec:ReprW>} for the case $A=\C$. Thus, the results of \S\ref{subsec:solutions1dim} can be directly applied to conclude that Virasoro constraints imply the scaled KP hierarchy. 

%\item Kazarian-Zograf, http://arxiv.org/pdf/1406.5976.pdf, We compute the number of coverings of ℂP1∖{0,1,∞} with a given monodromy type over ∞ and given numbers of preimages of 0 and 1. We show that the generating function for these numbers enjoys several remarkable integrability properties: it obeys the Virasoro constraints, an evolution equation, the KP (Kadomtsev-Petviashvili) hierarchy and satisfies a topological recursion in the sense of Eynard-Orantin
%\item http://arxiv.org/pdf/1404.4240.pdf, We have proved that generating functions for numbers of three different types of Belyi morphisms
%are free energies of special matrix models all of which are in the GKM class thus being
%tau functions of the KP hierarchy. Theorem 6.2. generating function satisfies KP in times ... (also virasoro). Citan un trabajo de Kazarian-Zograf. 

Our results could also be of interest within the context of Eynard-Orantin topological recursion (\cite{EO}). Indeed,  we learn from \cite{MulaseSafnuk} that Mirzakhani's recursion formula for the Weil-Petersson volumes (\cite{Mir}) is indeed a Virasoro constraint imposed on a generating function of these volumes and that this function satisfies the KdV hierarchy. It is worth pointing out some recent results on the relation of topological recursion and Virasoro constraints (\cite{Eynard, Milanov}). On the one hand, it has been shown in \cite{Eynard} that these Virasoro constraints are actually equivalent to Eynard-Orantin topological recursion for some spectral curve. On the other hand, one knows from \cite{Milanov}  that the correlation functions of a semisimple cohomological field theory satisfy the Eynard-Orantin topological recursion and that these recursion formulas are equivalent to $n$ copies of the Virasoro constraints for the ancestor potential. Therefore, two problems can be faced with our techniques. First, we think that Theorem~\ref{thm:solutionproductKdV} should imply  some bilinear relations of Hirota type for the solution of the Eynard-Orantin topological recursion. Second, due to the uniqueness of the solution and the fact that the solution satisfies the KP hierarchy, there must be a relation of the Eynard-Orantin spectral curve and the Krichever construction.  

Similarly, it would be interesting to interpret the recent papers \cite{Da,DOSS} from our perspective.

\end{document}